\theoremstyle{plain}
\newtheorem{Teo}{Theorem}[section]
\newtheorem{Lem}[Teo]{Lemma}
\newtheorem{Cor}[Teo]{Corollary}
\theoremstyle{definition}
\newtheorem{Definition}[Teo]{Definition}
\newcommand{\DistTo}{\xrightarrow{
   \,\smash{\raisebox{-0.65ex}{\ensuremath{\scriptstyle\sim}}}\,}}
\numberwithin{equation}{section}
\begin{document}

\title{Ternary relations and their polytopes}
\titlemark{Ternary relations and their polytopes}



\emsauthor{1}{
	\givenname{Aleksei}
	\surname{Lavrov}
	\orcid{0000-0001-5608-3907}}{A.~Lavrov}

\Emsaffil{1}{
	\department{Higher School of Mathematics}
	\organisation{MIPT}
	\rorid{00v0z9322}
	\address{9  Institutskiy per.}
	\zip{141701}
	\city{Dolgoprudny}
	\country{Russia}
	\affemail{lavrov.an@mipt.ru}}

\begin{abstract}
Graev\footnote{M. M. Graev, not to be confused with his father M. I. Graev} introduced 
the construction of a convex polytope associated with a symmetric ternary relation. 
He showed that the number of left-invariant Einstein metrics on a homogeneous space under 
some conditions is no more than the normalized volume of certain polytope of such form. 
It happens that the construction of a cosmological polytope introduced 
by Arkani-Hamed, Benincasa and Postnikov for computation of the wave function of 
the Universe is the special case of the Graev construction. The paper is devoted to 
unification of these two theories from combinatorial perspective.
\end{abstract}

\maketitle


\section{Introduction}

A \textit{(symmetric) ternary relation} is a pair $\mathcal{T} = (\Sigma, R)$ consisted of 
a finite set $\Sigma \simeq \{1, ..., n\}$ and a collection of unordered triples 
$R \subset \Sigma \times \Sigma \times \Sigma / S_{3}$. We will denote the unordered 
triple contained elements $i$, $j$, $k \in \Sigma$ by $[i, j, k]$. Two ternary relations 
$\mathcal{T}_{1}$ and $\mathcal{T}_{2}$ are said to be isomorphic if there is a bijection 
$\Sigma_{1} \longrightarrow \Sigma_{2}$ which sends any triple from $R_{1}$ to the triple 
in $R_{2}$. One can associate with any symmetric ternary relation a convex polytope in 
the following way.

\begin{Definition}\label{TernaryPolytope}
Suppose that we have a ternary relation $\mathcal{T}=(\Sigma, R)$. Consider the vector 
space $\mathbb{R}^{n}$ with the basis $\textbf{1}_{1}, ..., \textbf{1}_{n}$ enumerated by 
elements of the set $\Sigma$. The \textit{ternary polytope} $P(\mathcal{T}) \subset
\mathbb{R}^{n}$ associated with $\mathcal{T}$ is the convex polytope of the form:
\begin{equation*}
	P(\mathcal{T}) := 
	\text{Conv}\bigg( 
		\Big\{ 
			\textbf{1}_{i} + \textbf{1}_{j} - \textbf{1}_{k}, \ \
			\textbf{1}_{i} - \textbf{1}_{j} + \textbf{1}_{k}, \ \
			- \textbf{1}_{i} + \textbf{1}_{j} + \textbf{1}_{k} \ \ | \ \ 
			\text{ for each } [i, j, k] \in R 
		\Big\} 
	\bigg).
\end{equation*}
\end{Definition}

\noindent
Note that $P(\mathcal{T})$ is a lattice polytope which lies in the affine hyperplane 
$\Big\{\sum\limits_{i = 1}^{n} x^{i} = 1\Big\} \subset \mathbb{R}^{n}$ where 
$x^{1}, ..., x^{n}$ are coordinates of $\mathbb{R}^{n}$ in the basis 
$\textbf{1}_{1}, ..., \textbf{1}_{n}$. The ternary polytopes naturally appear in 
the following three topics: left-invariant Einstein metrics on homogeneous space, 
cosmological polytopes and finite metric spaces.

\subsection{Left-invariant Einstein metrics on homogeneous spaces}

The Riemannian metric $g$ on the manifold $M$ is called Einstein if it satisfies the
differential equation $\text{Ric}~g = \Lambda \cdot g$, $\Lambda \in \mathbb{R}$ (see
\cite{Besse}). For left-invariant metrics on homogeneous space $M = G/H$ of Lie group
$G$ the Einstein equation becomes the system of rational equations. The problem of 
description of real solutions of these rational equations is very difficult
and there are a lot of papers devoted to this problem (see the comprehensive overview in 
\cite{Ar}). The typical situation in this area is that the certain list of metrics on
given $M$ is known, however, there are no ways to construct new metrics, or to prove that
this list is complete even using computer. For example, the number of left-invariant 
Einstein metrics on Lie group $SU(2) \times SU(2)$ is still unknown (see \cite{BCHL}). 

Graev in his paper \cite{GrUMN} studied the particular case of homogeneous spaces $M=G/H$ 
for which $H$ is a compact subgroup of $G$ and the so-called isotropy representation 
$H$ \rotatebox[origin=c]{270}{$\circlearrowleft$} $T_{[eH]}(G/H)$ 
has no equivalent irreducible components. In this case the Einstein equation takes 
the form of the system of Laurent polynomial equations. Instead of considering real 
solutions he considered complex solutions of this system which correspond to 
complex-valued left-invariant Einstein metrics on $M$. If $G$ is a compact semisimple Lie
group, he showed using the Bernstein-Kushnirenko theorem (see \cite{Bern, K}) that 
the number $\mathcal{E}(M)$ of isolated complex-valued left-invariant Einstein metrics on
$M$ (up to homothety) is no more than the normalized volume $\nu(M)$ of the Newton 
polytope $\text{Newt}(s_{M})$ of the scalar curvature $s_{M}$ which also becomes Laurent 
polynomial in the considered case. If the defect $\delta(M) := \nu(M) - \mathcal{E}(M) 
\geq 0$ vanishes then all complex-valued left-invariant Einstein metrics are isolated, 
but for the case of the positive defect $\delta(M) > 0$ he proved that the "lost" metrics 
in the number of $\delta(M)$ can be found in the so-called Inonu-Wigner contractions 
$M_{\gamma}$ of $M$ associated with faces $\gamma \subset \text{Newt}(s_{M})$ of 
the polytope $\text{Newt}(s_{M})$ which are non compact homogenous spaces. Moreover, he 
noted that the construction of $\text{Newt}(s_{M})$ essentially depends only on 
the symmetric ternary relation $\mathcal{T}(M)$ describing commutation relationships 
between irreducible components of the isotropy representation 
$H$ \rotatebox[origin=c]{270}{$\circlearrowleft$} $T_{[eH]}(G/H)$.
Based on this observation he introduced the construction of polytope $P(\mathcal{T})$ by 
arbitrary symmetric ternary relation $\mathcal{T}$ such that $\text{Newt}(s_{M}) =
P\big( \mathcal{T}(M) \big)$. This construction is presented in 
Definition \ref{TernaryPolytope}.

In the case of flag manifolds $M = G/H$, i. e. $H$ is a centralizer of any torus in $G$,
the corresponding ternary relation $\mathcal{T}(M)$ can be directly constructed by
the so-called $T$-root system $\Omega$ of $M$. A $T$-root system is some generalization of 
the notion of root system (see \cite{AP}), but we can think about it just as a finite 
configuration of vectors in some vector space. The construction of the ternary relation 
associated with $\Omega$ is the following:

\begin{Definition}\label{GraevTernaryRelation}
Let $\Omega = \{ \pm \alpha_{i} \ | \ i = 1, ..., n \} \subset \mathbb{R}^{d}$ be a finite 
configuration of vectors in $d$-dimensional vector space such that 
$\Omega = (-1) \cdot \Omega$. 
A ternary relation $\mathcal{T}(\Omega)$ is a pair $(\Sigma_{\Omega}, R_{\Omega})$ 
such that $\Sigma_{\Omega} = \{ 1, ..., n \}$ is the set of indices of $\Omega$, while
$R_{\Omega}$ contains all triples $[i, j, k]$ for which the equality 
$\pm \alpha_{i} \pm \alpha_{j} \pm \alpha_{k} = 0$ holds for some distribution of signs.
\end{Definition}

\noindent 
Note that considering different enumerations of vectors of $\Omega$ we obtain isomorphic 
ternary relations so we do not distinguish them. Various combinatorial properties of 
ternary polytopes of the form\footnote{To be precise, Graev considered the polytopes of 
the form $(-1) \cdot P(\Omega)$.} $P(\Omega):=P\big( \mathcal{T}(\Omega) \big)$ were 
deeply studied by Graev in \cite{GrTMMO1}. For some low rank root systems these polytopes 
can be described explicitly (see Fig. \ref{Ternary_polytopes}): $P(A_{1}) = P(B_{1})$ is 
considered just as a point, $P(A_{2})$ is a 2-dimensional triangle, $P(B_{2})$ is 
a 3-dimensional triangular prism, $P(A_{3})$ is a tetrahedron, $P(A_{4})$ is 
a 5-dimensional polytope isometric to the product of simplices $\Delta_{2} \times 
\Delta_{3}$. For many $T$-root systems of higher rank Graev provided complete list of 
faces and computed the normalized volumes (namely, for $A_2$, $B_2$, $BC_{2,1}$, $BC_2$, 
$G_2$, $A_3$, $C_{3,1}$, $C_{3,2}$, $C_3$, $B_3$, $A_4$). However, there are still a lot 
of open questions about the polytopes $P(\Omega)$ even for the simplest root system 
$\Omega = A_{n}$, $n \geq 5$. For example, the formula for normalized volume of $P(A_{n})$ 
is unknown and the complete description of faces of $P(A_{n})$ is not done. We call 
ternary polytopes of the form $P(\Omega)$ by \textit{Graev polytopes}.

\begin{figure}[t]
\includegraphics[width=.6\textwidth]{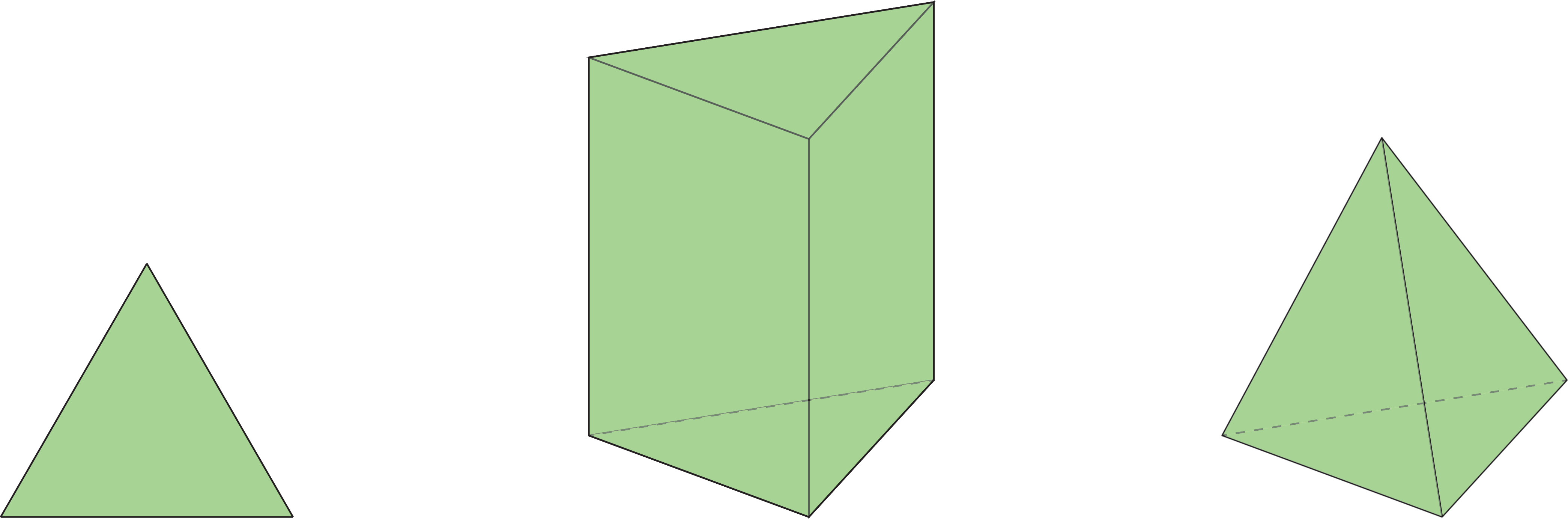}
\caption{Examples of ternary polytopes: $P(A_{2})$, $P(B_{2})$ and $P(A_3)$ (from left
to right).}\label{Ternary_polytopes}
\end{figure}

\subsection{Cosmological polytopes}

The study of another special class of ternary polytopes was initiated by Arkani-Hamed, 
Benincasa and Postnikov in \cite{A-HBP}. With any undirected graph $G$ considered as 
a special type of Feynman diagram they associated in some way a convex polytope $P(G)$ 
which was called \textit{a cosmological polytope}. They did not used explicitly the notion 
of ternary relation, but nevertheless their construction can be seen as the partial case 
of the construction of ternary polytopes. More precisely, it is possible to naturally 
construct the ternary relation $\mathcal{T}(G)$ by any graph $G$ such that 
$P(G) = P\big( \mathcal{T}(G) \big)$ as follows: 

\begin{Definition}\label{GraevTernaryRelation}
Let $G$ be an undirected graph with the set of vertices $V(G)$ and the set of edges 
$E(G)$. A ternary relation $\mathcal{T}(G)$ is a pair $(\Sigma_{G}, R_{G})$ such that
$\Sigma_{G} := V(G) \sqcup E(G)$ and $R_{G}$ contains all triples of the form 
$[v_{1}, e, v_{2}]$ for any edge $e \in E(G)$ with vertices $v_{1}$, $v_{2} \in V(G)$.
\end{Definition}

In some cases the cosmological polytopes $P(G)$ can be described explicitly. 
More precisely, the dimension of the cosmological polytope associated with a graph $G$ 
is equal to $|V(G)|+|E(G)|-1$. So the only 2-dimensional case is given be the graph 
which is an edge with two vertices. For this graph the corresponding cosmological polytope 
is just a triangle. Note that the same polytope is given by the Graev construction for 
the root system $A_2$. In fact, we have the isomorphism of the ternary relations 
$\mathcal{T}$\mbox{$\Big($
\includegraphics[width=0.05\textwidth]{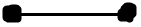}
$\Big)$}$\simeq \mathcal{T}(A_{2})$.
Next, the only 3-dimensional case is given by the graph which consists of two vertices 
connected by two parallel edges. The corresponding cosmological polytope is a prism 
coincided with the Graev polytope for $B_2$ because again we have the isomorphism of 
ternary relations 
$\mathcal{T}$\mbox{$\Big($\includegraphics[width=0.05\textwidth]{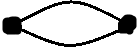}
$\Big)$}$\simeq \mathcal{T}(B_{2})$.

In contrast with Graev polytopes $P(\Omega)$, the cosmological polytopes $P(G)$ are much
better studied and there are general results about their structure (see \cite{B, J-KSV, 
BGJLS, KM}). For example, it is well-known that the facets of $P(G)$ are in one-to-one
correspondence with connected subgraphs of $G$.

\subsection{Finite metric spaces}

One can show that any element of the dual cone $P(A_{n-1})^{\vee}$ can be considered as 
a function over the set of edges of the complete graph $K_{n}$ satisfying some linear
inequalities which can be interpreted as triangle inequalities for a metric. In other
words, we have that $P(A_{n-1})^{\vee}$ is isomorphic to the so-called metric cone 
$M_{n} \subset \mathbb{R}^{n \choose 2}$ whose elements correspond to (semi-)metrics over 
the finite set consisted of $n$ points. The theory of finite metric spaces is well
developed (see \cite{DL}) and has many applications in the pure mathematics as well as in 
the different areas of applied mathematics, for example, in the computational biology (see
\cite{DMT}). At the same time this theory still has many open problems. One of them, which
is particularly interested for us, is the classification of extreme rays of the cone
$M_{n}$ (see \cite{BD, Av, Av2}). Due to the isomorphism $P(A_{n-1})^{\vee} \simeq 
M_{n}$ these extreme rays are in one-to-one correspondence with facets of the Graev 
polytope $P(A_{n-1})$. The complete description of these extreme rays is done only for 
$n \leq 7$ (see \cite{Gr}). However, some families of extreme rays of $M_{n}$ are known 
for arbitrary $n$. For example, there is the family of extreme rays generated by 
so-called cut-metrics (which are also sometimes called split-metrics or Hamming metrics) 
associated with cuts of the graph $K_{n}$. Another infinite family was constructed by Avis 
in \cite{Av} by considering graph metrics induced by subgraphs $G \subset K_{n}$ 
satisfying some properties.

\subsection{Main contributions}

\begin{enumerate}
\item In Section \ref{Simplicial posets} we generalize the cosmological construction of 
	a ternary relation. More precisely, instead of a graph which can be considered as 
	1-dimensional simplicial poset we consider 2-dimensional simplicial poset 
	$\mathcal{P}$ (see \cite{S1, S2}) and associate with it the ternary	relation 
	$\mathcal{T}(P)$.
	\begin{enumerate}	
		\item We prove that for any graph $G$ we have $P(G) = P(CG)$ where $CG$ is 
			a 2-dimensional simplicial poset called a cone over $G$
			(see Lemma \ref{ConeGraphIsomorphism}).

		\item Next, we show that $P(A_{n-1}) = P(\mathcal{K}_{n})$ where $\mathcal{K}_{n}$ 
			is the 2-skeleton of the face poset of the standard $(n-1)$-dimensional 
			simplex $\Delta_{n-1} \subset \mathbb{R}^{n}$ 
			(see Lemma \ref{AnIsomorphism}).

		\item Finally, for the root systems $D_{n}$ and $B_{n}$ we construct 2-dimensional
			simplicial posets $\overline{\mathcal{K}}_{n}$ and
			$\big( C\overline{\mathcal{K}}_{n} \big)^{(2)}$ such that 
			$P(D_{n}) = P(\overline{\mathcal{K}}_{n})$ and 
			$P(B_{n}) = P\Big( \big( C\overline{\mathcal{K}}_{n} \big)^{(2)} \Big)$
			(see Lemmas \ref{DnIsomorphism} and \ref{BnIsomorphism}).
	\end{enumerate}
\item We partially describe the facet structure of the ternary polytopes associated with
	2-dimensional simplicial posets in two ways.
	\begin{enumerate}
		\item In Section \ref{Minimal markings} we extend the technique of markings used
			in the theory of cosmological polytopes to the case of 2-dimensional
			simplicial posets (see Theorem \ref{CocycleMarkings} and its corollaries).

		\item In Section \ref{Extremal metrics} we introduce the notion of a metric on
			2-dimensional simplicial poset and generalize the result of Avis (see Theorems
			\ref{ExtremalMetrics} and \ref{ExtremalMetricsInDn}).
	\end{enumerate}
\end{enumerate} 

\section{Simplicial posets}\label{Simplicial posets}

\subsection{Preliminaries}
Let us remind the basic notions related to the theory of posets (see \cite{S2}). 
By definition, \textit{a poset} is a set equipped with a partial order $\leq$ which is 
reflexive, antisymmetric and transitive. A map $f: \mathcal{P}_{1} \longrightarrow 
\mathcal{P}_{2}$ between two posets is a map between corresponding sets which respects 
the orders in $\mathcal{P}_{1}$ and $\mathcal{P}_{2}$, i. e. $f(x) \leq f(y)$ in 
$\mathcal{P}_{2}$ for any $x \leq y$ in $\mathcal{P}_{1}$. In the usual way the notion of 
isomorphism is defined: two posets $\mathcal{P}_{1}$ and $\mathcal{P}_{2}$ are said to be 
isomorphic if there are two maps $f:\mathcal{P}_{1} \longrightarrow \mathcal{P}_{2}$ and 
$g: \mathcal{P}_{2} \longrightarrow \mathcal{P}_{1}$ such that 
$f \circ g = \text{id}_{\mathcal{P}_{2}}$ and $g \circ f = \text{id}_{\mathcal{P}_{1}}$.
The subposet consisted of elements $z$ of $\mathcal{P}$ which satisfy the inequality 
$x \leq z \leq y$ is called \textit{a closed interval} and denoted by 
$[x, y] \subset \mathcal{P}$. The elements $z$ of $\mathcal{P}$ satisfying the strict 
inequality $x < z < y$ form a subposet called \textit{an open interval} and denoted by 
$(x, y) \subset \mathcal{P}$. We will say that the element $x$ \textit{is covered by} 
the element $y$ or that $y$ \textit{covers} $x$ if we have that $x < y$ and $(x, y)$ 
is empty. A poset $\mathcal{P}$ is called graded if $\mathcal{P}$ is equipped with 
a rank function $\rho : \mathcal{P} \longrightarrow \mathbb{Z}$ satisfying the inequality 
$\rho(x) < \rho(y)$ for any elements $x, y \in \mathcal{P}$ such that $x < y$, 
and the equality $\rho(y) = \rho(x) + 1$ for all $x, y \in \mathcal{P}$ such that $y$
covers $x$. As an example of a graded poset consider the family of all subsets of 
the fixed set $\{1, .., n\}$ with the partial order induced by inclusion of subsets. 
This poset is usually denoted by $\mathcal{B}_{n}$ and being equipped with three 
operations $\land$, $\lor$ and $\neg$ given by the intersection, union and complement 
operations over sets it becomes so-called \textit{boolean algebra}. The rank function of 
$\mathcal{B}_{n}$ is defined as $\rho(S) = |S|$ for any subset $S \in \mathcal{B}_{n}$. 
Note that the empty set $\emptyset$ is also considered as an element of $\mathcal{B}_{n}$ 
with $\rho(\emptyset) = 0$. If a poset has an unique minimal element then it is usually 
denoted by $\hat{0}$, while the maximal element if it exists is denoted by $\hat{1}$. For
example, for $\mathcal{B}_{n}$ we have that $\hat{0} = \emptyset$ and 
$\hat{1} = \{1, .., n\}$.

Any simplicial complex $\mathcal{C}$ determines the so-called \textit{face poset} of 
$\mathcal{C}$ whose elements are faces of $\mathcal{C}$ and extra element $\hat{0}$ 
corresponding to empty face, while the partial order is induced by inclusion of faces. 
It is easy to see that $\mathcal{B}_{n}$ is actually the face poset of the standard 
$(n-1)$-dimensional simplex $\Delta_{n-1} \subset \mathbb{R}^{n}$. It gives motivation to 
introduce the following definition. A poset $\mathcal{P}$ with unique minimal element $\hat{0}$ 
is called \textit{a simplicial poset} if any closed interval $[\hat{0}, x] \subset 
\mathcal{P}$ is isomorphic to the boolean algebra $\mathcal{B}_{n}$ for some $n \geq 0$ 
depending on the element $x \in \mathcal{P}$. We will call elements of simplicial poset 
$\mathcal{P}$ \textit{simplices} in analogy with usual simplicial complexes. If two 
simplices $x, y \in \mathcal{P}$ are in the relation $x < y$ then we say that $x$ is 
\textit{a face} of $y$. If the simplex $x$ is covered by $y$ we say that $x$ is 
\textit{a facet} of $y$. Any simlicial poset is graded with the rank function $\rho$ 
satisfying the relation $[0, x] \simeq \mathcal{B}_{\rho(x)}$. The number $\rho(x)-1$ 
associated with any simplex $x \in \mathcal{P}$ is called \textit{the dimension} of $x$. 
The dimension of the simplicial poset $\mathcal{P}$ itself denoted by 
$\text{dim}~\mathcal{P}$ is the maximum of dimensions of all simplices of $\mathcal{P}$. 
We will use the notation $\mathcal{P}(k)$ for the subset of simplices $x \in \mathcal{P}$ 
whose dimension is equal to $k$, i. e. $\rho(x)-1 = k$. Also we call the subposet of 
$\mathcal{P}$ consisted of simplices whose dimension is less or equal to $k$ by 
\textit{the $k$-skeleton} of $\mathcal{P}$ and denote it by 
$\mathcal{P}^{(k)} \subset \mathcal{P}$. A simplicial poset $\mathcal{P}$ is said to be 
\textit{pure} if all maximal simplices of $\mathcal{P}$ have the same dimension 
equal to $\text{dim}~\mathcal{P}$.

Having two posets $\mathcal{P}_{1}$ and $\mathcal{P}_{2}$ we can construct a new one 
$\mathcal{P}_{1} \times \mathcal{P}_{2}$ called the \textit{direct product of}
$\mathcal{P}_{1}$ and $\mathcal{P}_{2}$ which is the direct product of underlying sets 
with the partial order satisfying the condition: $(x_{1}, y_{1}) \leq (x_{2}, y_{2})$ 
if and only if $x_{1} \leq_{\mathcal{P}_{1}} x_{2}$ and $y_{1} \leq_{\mathcal{P}_{2}} 
y_{2}$. One can check that $\mathcal{B}_{n} \times \mathcal{B}_{m} \simeq 
\mathcal{B}_{n+m-1}$ and the direct product of two simplicial posets is again simplicial. 
For any simplicial poset $\mathcal{P}$ define \textit{the cone over} $\mathcal{P}$ as 
the direct product $C\mathcal{P} := \mathcal{P} \times \mathcal{B}_{1}$. Note that we have 
the equality $\text{dim}~C\mathcal{P} = \text{dim}~\mathcal{P} + 1$. We will call 
the additional vertex $(\hat{0}_{\mathcal{P}}, \hat{1}_{\mathcal{B}_{1}})$ of 
$C\mathcal{P}$ by \textit{the apex} of the cone $C\mathcal{P}$.

One can show that the face poset of any simplicial complex is a simplicial poset. 
On the other hand, it is not true that any simplicial poset is isomorphic to the face 
poset of simplicial complex. However, any simplicial poset can be represented as the face 
poset of a regular CW-complex which differs from simplicial complex only by the fact that 
it allows two different simplices to have more than one common facets. Note that 
1-dimensional simplicial poset can be considered just as a graph which may have multiple 
edges, but loops are prohibited. In particular, it means that the 1-skeleton 
$\mathcal{P}^{(1)}$ of any simplicial poset $\mathcal{P}$ is a graph. This observation 
will be used throughout the paper. 

\subsection{Ternary relations associated with 2-dimensional simplicial posets}
Further we will consider only 2-dimensional simplicial posets. We will call 0-simplices by 
\textit{vertices}, 1-simplices by \textit{edges} and 2-simplices by \textit{triangles}. 
For any 2-dimensional simplicial poset we can construct a ternary relation as follows:

\begin{Definition}\label{PosetTernaryRelation}
	Let $\mathcal{P}$ be a 2-dimensional simplicial poset. A ternary relation 
	$\mathcal{T}(\mathcal{P})$ is a pair $(S_{\mathcal{P}}, R_{\mathcal{P}})$ such that 
	$S_{\mathcal{P}} = \mathcal{P}(1)$ and $R_{\mathcal{P}}$ contains all the triples
	$[e_1, e_2, e_3]$ for any triangle $\Delta \in \mathcal{P}(2)$ with facets $e_1$,
	$e_2$, $e_3$.
\end{Definition}

\noindent As before we can consider the ternary polytope associated with this ternary
relation which we denote py $P(\mathcal{P}) := P\big( \mathcal{T}(\mathcal{P}) \big)$.
Definition \ref{PosetTernaryRelation} extends the construction of a ternary relation by 
a graph in the following sense. 

\begin{Lem}\label{ConeGraphIsomorphism}
	For any graph $G$ considered as 1-dimensional simplicial poset we have the isomorphism
	of ternary relations $\mathcal{T}(CG) \simeq \mathcal{T}(G)$ implying $P(CG) = P(G)$.	
\end{Lem}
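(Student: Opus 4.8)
The plan is to unwind the definitions on both sides and exhibit an explicit bijection between the ground sets that carries the triples of one relation to the triples of the other. Recall that $\mathcal{T}(G)$ has ground set $\Sigma_G = V(G) \sqcup E(G)$ with triples $[v_1, e, v_2]$ indexed by edges $e \in E(G)$ joining $v_1$ and $v_2$. On the other hand, the cone $CG = G \times \mathcal{B}_1$ is a $2$-dimensional simplicial poset, so $\mathcal{T}(CG)$ has ground set $S_{CG} = CG(1)$, the set of edges (i.e. $1$-simplices) of the cone, and its triples $[e_1, e_2, e_3]$ are indexed by the triangles (i.e. $2$-simplices) of $CG$ with facets $e_1, e_2, e_3$. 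So the whole statement reduces to understanding how cones interact with the $\times \mathcal{B}_1$ operation at the level of simplices of each dimension.

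First I would describe the simplices of $CG = G \times \mathcal{B}_1$ dimension by dimension. Writing $\mathcal{B}_1 = \{\hat 0, \hat 1\}$ with $\rho(\hat 0) = 0$, $\rho(\hat 1) = 1$, and using $\rho(x,y) = \rho_G(x) + \rho_{\mathcal{B}_1}(y)$ so that $\dim(x,y) = \dim_G x + \dim_{\mathcal{B}_1} y$, I would list: the vertices of $CG$ are the pairs $(v, \hat 0)$ for $v \in V(G)$ together with the apex $(\hat 0_G, \hat 1)$; the edges of $CG$ are the pairs $(e, \hat 0)$ for $e \in E(G)$ together with the ``cone edges'' $(v, \hat 1)$ for $v \in V(G)$; and the triangles of $CG$ are exactly the pairs $(e, \hat 1)$ for $e \in E(G)$, since a $2$-simplex needs total rank $3$ and the only way to achieve it is $\rho_G(e) = 2$ (an edge of $G$) and $\rho_{\mathcal{B}_1}(\hat 1) = 1$. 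This last point is the crux: triangles of $CG$ are in canonical bijection with edges of $G$.

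The natural bijection on ground sets is then the map $S_{CG} = CG(1) \to V(G) \sqcup E(G) = \Sigma_G$ sending the cone edge $(v, \hat 1) \mapsto v$ and the edge $(e, \hat 0) \mapsto e$; one checks this is a bijection directly from the enumeration above. To finish I would verify that it carries triples to triples. Fix an edge $e \in E(G)$ with endpoints $v_1, v_2$; its corresponding triangle in $CG$ is $(e, \hat 1)$, whose three facets (the elements of $CG(1)$ covered by it) must be identified. Using the product order, the facets of $(e, \hat 1)$ are the edges $(x, y) < (e, \hat 1)$ of rank $2$, namely $(e, \hat 0)$ (taking $x = e$, $y = \hat 0$) and the two cone edges $(v_1, \hat 1)$ and $(v_2, \hat 1)$ (taking $x \in \{v_1, v_2\}$ the two facets of $e$ in $G$, and $y = \hat 1$). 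Under the bijection these three facets map to $e$, $v_1$, $v_2$, so the triple $[(e,\hat 0), (v_1, \hat 1), (v_2, \hat 1)]$ of $R_{CG}$ is sent to the triple $[e, v_1, v_2] = [v_1, e, v_2]$ of $R_G$, and this correspondence is a bijection on triples because both are indexed by $E(G)$.

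The main obstacle, such as it is, lies in being careful with the facet computation in the product poset: I must justify that the elements covered by $(e, \hat 1)$ are precisely the three listed, i.e. that there are no other rank-$2$ elements below it and that $e$ genuinely has exactly two facets $v_1, v_2$ in the graph $G$ (which is where the assumption that $G$ has no loops, so that an edge has two distinct endpoint-vertices, is implicitly used). Everything else is a direct translation through Definitions~\ref{PosetTernaryRelation} and~\ref{GraevTernaryRelation} together with the rank formula for $\mathcal{P}_1 \times \mathcal{P}_2$, and the induced equality $P(CG) = P(G)$ of polytopes then follows immediately since isomorphic ternary relations produce identical ternary polytopes up to relabelling the basis vectors $\textbf{1}_i$.
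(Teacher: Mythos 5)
Your proposal is correct and follows essentially the same route as the paper's own proof: enumerate the edges of $CG$ as $(e,\hat 0_{\mathcal{B}_1})$ and $(v,\hat 1_{\mathcal{B}_1})$, observe that the triangles are exactly the $(e,\hat 1_{\mathcal{B}_1})$ with facets $(e,\hat 0_{\mathcal{B}_1})$, $(v_1,\hat 1_{\mathcal{B}_1})$, $(v_2,\hat 1_{\mathcal{B}_1})$, and check that the canonical bijection $CG(1)\to V(G)\sqcup E(G)$ carries triples to triples. The only blemish is your displayed identity $\dim(x,y)=\dim_G x+\dim_{\mathcal{B}_1}y$ (the correct relation is $\dim(x,y)=\dim_G x+\rho_{\mathcal{B}_1}(y)$, i.e.\ dimensions add only up to the join shift), but since your actual enumeration is carried out with the rank formula $\rho(x,y)=\rho_G(x)+\rho_{\mathcal{B}_1}(y)$, the argument is unaffected.
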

\begin{proof}
	By construction $\Sigma_{CG}$ coincides with the set $CG(1)$ of edges of $CG$. 
	Any edge of $CG$ has either the form $(e, \hat{0}_{\mathcal{B}_{1}})$ for 
	$e \in E(G)$, or $(v, \hat{1}_{\mathcal{B}_{1}})$ for $v \in V(G)$. 
	Taking into account that $\Sigma_{G} = E(G) \sqcup V(G)$ we obtain the canonical
	bijection $\phi: \Sigma_{CG} \DistTo \Sigma_{G}$. The triangles of $CG$ have the form 
	$(e, \hat{1}_{\mathcal{B}_{1}})$ for any $e \in E(G)$. Each of them has the following 
	facets: $(e, \hat{0}_{\mathcal{B}_{1}})$ and $(v_1, \hat{1}_{\mathcal{B}_{1}})$, 
	$(v_2, \hat{1}_{\mathcal{B}_{1}})$ where $v_1, v_2 \in V(G)$ are end-vertices of 
	the edge $e \in E(G)$. Therefore, the set $R_{CG}$ consists of the triples of the form 
	$[(e, \hat{0}_{\mathcal{B}_{1}}), (v_{1}, \hat{1}_{\mathcal{B}_{1}}), 
	(v_{2}, \hat{1}_{\mathcal{B}_{1}})]$ for any edge $e \in E(G)$ with end-vertices 
	$v_{1}$, $v_{2} \in V(G)$ which are in one-to-one correspondence with the triples 
	$[e, v_1, v_2]$ of $R_{G}$ under the map $\phi$.
\end{proof}
\noindent
Cones over some graphs see in Fig. \ref{Cones_over_graphs}.

\begin{figure}[t]
\includegraphics[width=.8\textwidth]{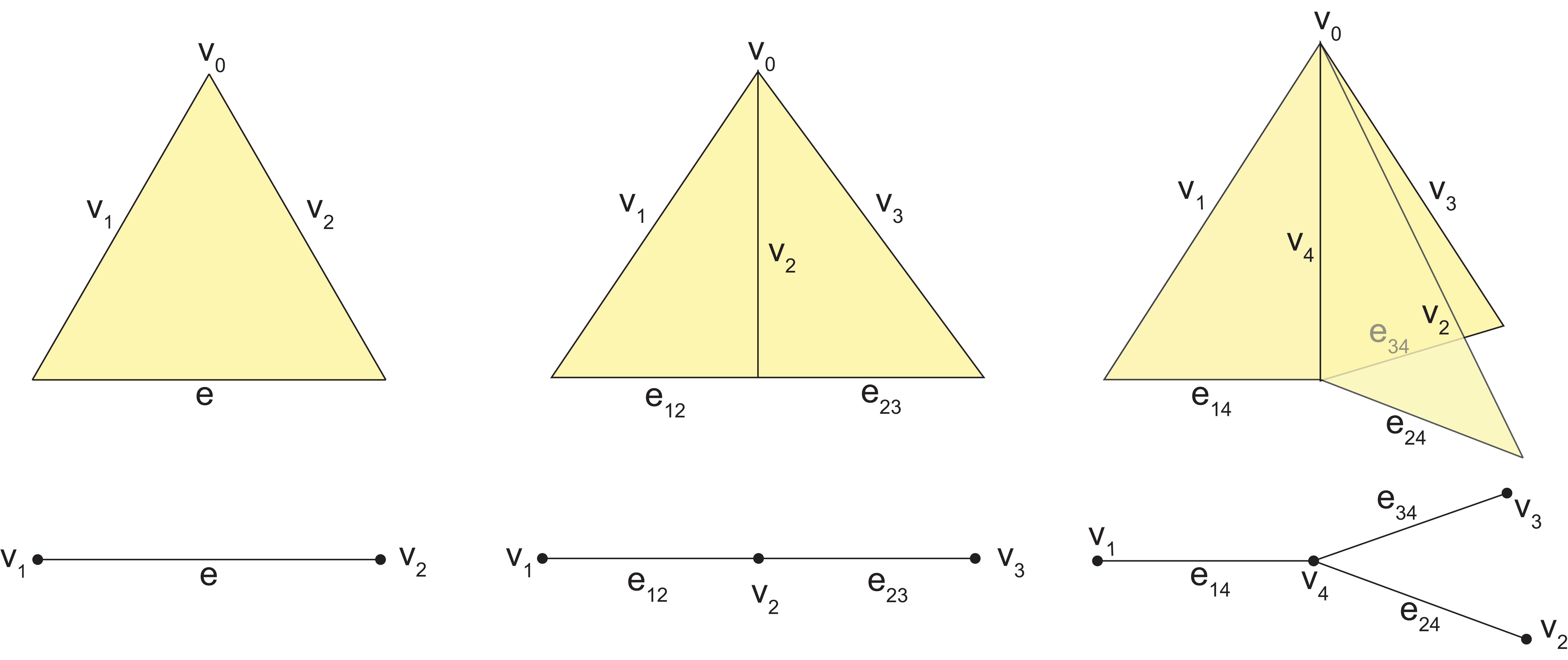}
	\caption{Cones over some graphs. Note that the set of edges of the cone is the union 
	of edges and vertices of the initial graph. The apex of each cone is denoted 
	by $v_{0}$.}\label{Cones_over_graphs}
\end{figure}

Ternary relations induced by some root systems can also be obtained as ternary relations
of simplicial posets. Denote the 2-skeleton of the boolean algebra $\mathcal{B}_{n}$ by
$\mathcal{K}_{n}$, i. e. $\mathcal{K}_{n} := \mathcal{B}_{n}^{(2)}$. Geometrically,
$\mathcal{K}_{n}$ can be seen as the face poset of the 2-skeleton of the standard 
$(n-1)$-dimensional simplex $\Delta_{n-1} \subset \mathbb{R}^{n}$. Note that 
the 1-skeleton of $\mathcal{K}_{n}$ is isomorphic to the complete graph $K_{n}$, i. e.
$\mathcal{K}_{n}^{(1)} \simeq K_{n}$, this is the reason for the notation 
$\mathcal{K}_{n}$ is used. Also we have that triangles of $\mathcal{K}_{n}$ are in 
one-to-one correspondence with 3-cycles of the graph $\mathcal{K}_{n}^{(1)}$.

\begin{Lem}\label{AnIsomorphism}
	We have the isomorphism $\mathcal{T}(\mathcal{K}_{n}) \simeq \mathcal{T}(A_{n-1})$ 
	implying $P(\mathcal{K}_{n}) = P(A_{n-1})$.
\end{Lem}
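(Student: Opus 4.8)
The plan is to exhibit an explicit isomorphism of ternary relations and then use the fact that isomorphic ternary relations determine the same ternary polytope up to a relabelling of the coordinate basis. First I would unwind both index sets. On the simplicial side, the elements of $\mathcal{K}_{n}(1) = \mathcal{B}_{n}^{(2)}(1)$ are exactly the rank-$2$ elements of the boolean algebra $\mathcal{B}_{n}$, i.e. the two-element subsets $\{i,j\} \subset \{1,\dots,n\}$, so $S_{\mathcal{K}_{n}}$ is the set of $\binom{n}{2}$ pairs. On the root-system side, I would realize $A_{n-1}$ in $\mathbb{R}^{n}$ with positive roots $\alpha_{ij} := e_{i} - e_{j}$ for $1 \le i < j \le n$, so that $\Omega = \{\pm\alpha_{ij}\}$ and $\Sigma_{A_{n-1}}$ is again indexed by the pairs $\{i,j\}$. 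I would then take the tautological bijection $\psi \colon S_{\mathcal{K}_{n}} \to \Sigma_{A_{n-1}}$ sending the edge $\{i,j\}$ of $\mathcal{K}_{n}$ to the index of the root $\alpha_{ij}$, and it remains to check that $\psi$ carries $R_{\mathcal{K}_{n}}$ onto $R_{A_{n-1}}$.

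For the forward inclusion I would describe the triangles explicitly. By Definition \ref{PosetTernaryRelation} the elements of $R_{\mathcal{K}_{n}}$ are the facet-triples of the $2$-simplices of $\mathcal{K}_{n}$; a triangle is a rank-$3$ element, i.e. a three-element subset $\{i,j,k\}$, whose three facets are the edges $\{i,j\}$, $\{i,k\}$, $\{j,k\}$. Under $\psi$ this maps to the triple of roots $\alpha_{ij}, \alpha_{ik}, \alpha_{jk}$, and the identity $(e_{i}-e_{j}) - (e_{i}-e_{k}) + (e_{j}-e_{k}) = 0$ shows that some distribution of signs yields $\pm\alpha_{ij} \pm \alpha_{ik} \pm \alpha_{jk} = 0$, so this triple lies in $R_{A_{n-1}}$.

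The substantive point, and the step I expect to be the main obstacle, is the reverse inclusion: I must show that three distinct positive roots of $A_{n-1}$ admit a vanishing signed sum only if their underlying pairs form a triangle, thereby ruling out configurations spread over four, five or six coordinates. The clean way to do this is to absorb each sign into the order of a difference, writing each signed root as $e_{p} - e_{q}$, so that $\pm\alpha \pm \beta \pm \gamma = 0$ becomes the equality of multisets of heads and tails $\{p_{1},p_{2},p_{3}\} = \{q_{1},q_{2},q_{3}\}$; equivalently, the three directed edges $(p_{r},q_{r})$ form a balanced digraph with equal in- and out-degree at every vertex. Since no root is a loop and the three roots are distinct, a short counting argument finishes it: every participating coordinate must occur equally often as a head and as a tail, and since there are only three head-slots, at most three coordinates occur, which forces exactly three coordinates each used once, i.e. a single directed $3$-cycle $i \to j \to k \to i$, whose underlying pairs $\{i,j\},\{j,k\},\{k,i\}$ are precisely a triangle of $\mathcal{K}_{n}$. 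Matching $R_{\mathcal{K}_{n}}$ with $R_{A_{n-1}}$ under $\psi$ yields the isomorphism of ternary relations; since the defining vertices of a ternary polytope depend on the relation only through its combinatorial data, the induced permutation of the basis $\textbf{1}_{1},\dots,\textbf{1}_{\binom{n}{2}}$ identifies $P(\mathcal{K}_{n})$ with $P(A_{n-1})$, giving the asserted equality.
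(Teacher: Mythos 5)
Your proposal is correct and follows essentially the same route as the paper: both set up the tautological bijection $e_{ij} \leftrightarrow \varepsilon_{i}-\varepsilon_{j}$ between edges of $\mathcal{K}_{n}$ (two-element subsets) and the pairs $\pm(\varepsilon_{i}-\varepsilon_{j})$ of roots, and then match the facet-triples of triangles with the vanishing-sum triples of roots. The only difference is that you explicitly justify the reverse inclusion --- your balanced-digraph counting argument showing that a vanishing signed sum of three distinct roots of $A_{n-1}$ forces a directed $3$-cycle --- whereas the paper simply asserts the list of triples of $\mathcal{T}(A_{n-1})$ without proof, so your write-up is, if anything, the more complete one.
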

\begin{proof}
By construction, the vertices of $\mathcal{K}_{n}$ can be represented as sets $v_{1} := 
\{ 1 \}$, .., $v_{n} := \{ n \}$, the edges as 2-element sets $e_{ij} := \{ i, j \}$ 
where $1 \leq i \neq j \leq n$, finally, the triangles as 3-element sets $\Delta_{ijk}:= 
\{ i, j, k\}$ for any $1 \leq i \neq j \neq k \leq n$. Note that the notations $e_{ij}$
and $\Delta_{ijk}$ are invariant under permutations of indicies. The triples in 
the ternary relation $\mathcal{T}(\mathcal{K}_{n})$ have the form 
$[ e_{ij}, e_{jk}, e_{ik}]$ for $1 \leq i < j < k \leq n$. On the other hand, 
the root system $A_{n-1}$ is the configuration of vectors $\big\{ \pm (\varepsilon_{i} -
\varepsilon_{j}) \ \mid \ 1 \leq i \neq j \leq n \big\} \subset \mathbb{R}^{n}$ where 
$\{\varepsilon_{i} \ \mid \ 1 \leq i \leq n \}$ is the standard basis of the vector space 
$\mathbb{R}^{n}$. The triples of the ternary relation $\mathcal{T}(A_{n})$ are 
the following: $[\varepsilon_{i} - \varepsilon_{j}, \varepsilon_{j} - \varepsilon_{k}, 
\varepsilon_{i} - \varepsilon_{k}]$ for any $1 \leq i < j < k \leq n$. Therefore, we have 
the natural correspondence $e_{ij} \leftrightarrow \varepsilon_{i} - \varepsilon_{j}$, 
$1 \leq i < j \leq n$, which gives the isomorphism of the ternary relations.
\end{proof}

Now construct the extension $\overline{\mathcal{K}}_{n}$ of $\mathcal{K}_{n}$ as follows. 
The vertices of $\overline{\mathcal{K}}_{n}$ are the same as for $\mathcal{K}_{n}$, 
i. e. $\overline{\mathcal{K}}_{n}(0) = \mathcal{K}_{n}(0) = \{ v_{1}, .., v_{n}\}$. 
On the other hand, $\mathcal{K}_{n}(1) \subsetneq \overline{\mathcal{K}}_{n}(1)$ and 
$\mathcal{K}_{n}(2) \subsetneq \overline{\mathcal{K}}_{n}(2)$. More precisely, along with 
edges of the form $e_{ij} \in \mathcal{K}_{n}(1) \subset \overline{\mathcal{K}}_{n}(1)$ 
the poset $\overline{\mathcal{K}}_{n}$ has edges which we denote by $e^{i}_{j}=e^{j}_{i}$, 
$1 \leq i \neq j \leq n$. The facets of the edge $e^{i}_{j}$ are the same as for the edge 
$e_{ij}$, namely, the vertices $v_{i}$ and $v_{j}$. Next, along with triangles 
$\Delta_{ijk} \in \mathcal{K}_{n}(2) \subset \overline{\mathcal{K}}_{n}(2)$ the poset
$\overline{\mathcal{K}}_{n}$ contains triangles denoted by $\Delta^{i}_{jk} =
\Delta^{i}_{kj}$, $1 \leq i \neq j \neq k \leq n$. The 0-dimensional faces of the triangle 
$\Delta^{i}_{jk}$ are the vertices $v_{i}$, $v_{j}$ and $v_{k}$, while the facets are 
the following edges: $e^{i}_{j}$, $e^{i}_{k}$ and $e_{jk}$. Summarizing, the poset
$\overline{\mathcal{K}}_{n}$ is defined as follows:
\begin{gather}
	\overline{\mathcal{K}}_{n}(0) = \{v_{1}, .., v_{n}\}, \ \ \
	\overline{\mathcal{K}}_{n}(1) = \{e_{ij} \ | \ 1 \leq i \neq j \leq n \} \sqcup
		\{ e^{i}_{j} \ | \ 1 \leq i \neq j \leq n \}, \notag
	\\
	\overline{\mathcal{K}}_{n}(2) = \{ \Delta_{ijk} \ | \ 1 \leq i \neq j \neq k \leq n \}
		\sqcup \{ \Delta^{i}_{jk} \ | \ 1 \leq i \neq j \neq k \leq n\}, \notag
	\\
	\text{facets of } e_{ij} \text{ and } e^{i}_{j} \text{ are } v_{i}, v_{j}, \notag
	\\
	\text{facets of } \Delta_{ijk} \text{ are } e_{ij}, e_{jk}, e_{ik}, \ \ \
	\text{facets of } \Delta^{i}_{jk} \text{ are } e^{i}_{j}, e^{i}_{k}, e_{jk}. \notag
\end{gather}
Straightforward computations show that any closed interval $[\hat{0}, x]$ in 
$\overline{\mathcal{K}}_{n}$ is isomorphic to boolean algebra, so
$\overline{\mathcal{K}}_{n}$ is a simplicial poset. However, in contrast with 
$\mathcal{K}_{n}$, it is already not the face poset of a simplicial complex. To see this 
consider the 1-skeleton of $\overline{\mathcal{K}}_{n}$ and denote it by
$\overline{K}_{n}:=\overline{\mathcal{K}}^{(1)}$. Note that the graph $\overline{K}_{n}$
contains exactly two edges, $e_{ij}$ and $e^{i}_{j}$, for each pair of vertices $v_{i},
v_{j}$. So $\overline{K}_{n}$ is not the face poset of a simplicial complex which implies 
the same property for $\overline{\mathcal{K}}_{n}$. The graph $\overline{K}_{n}$ can be 
seen as "doubling" of the complete graph $K_{n}$. Also note that the triangles of
$\overline{\mathcal{K}}_{n}$ are given by all 3-cycles in $\overline{K}_{n}$ which have
even number of edges of the form $e^{i}_{j}$.

\begin{Lem}\label{DnIsomorphism}
	We have the isomorphism $\mathcal{T}(\overline{\mathcal{K}}_{n}) \simeq 
	\mathcal{T}(D_{n})$ implying $P(\overline{\mathcal{K}}_{n}) = P(D_{n})$.
\end{Lem}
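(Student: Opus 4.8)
The plan is to follow the template of the proof of Lemma~\ref{AnIsomorphism}, enlarging the edge--root dictionary to account for the doubled edge set of $\overline{\mathcal{K}}_{n}$. Recall that $D_{n}$ is the configuration $\big\{ \pm \varepsilon_{i} \pm \varepsilon_{j} \ \mid \ 1 \leq i < j \leq n \big\} \subset \mathbb{R}^{n}$, so its index set $\Sigma_{D_{n}}$ is in bijection with the $n(n-1)$ sign-pairs $\{\pm(\varepsilon_{i} - \varepsilon_{j})\}$ and $\{\pm(\varepsilon_{i} + \varepsilon_{j})\}$. On the poset side, $S_{\overline{\mathcal{K}}_{n}} = \overline{\mathcal{K}}_{n}(1)$ also has exactly $n(n-1)$ elements, namely the edges $e_{ij}$ and $e^{i}_{j}$. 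I would therefore set $\phi(e_{ij}) := \varepsilon_{i} - \varepsilon_{j}$ and $\phi(e^{i}_{j}) := \varepsilon_{i} + \varepsilon_{j}$; since $e_{ij}$, $e^{i}_{j}$ and the corresponding sign-pairs are all symmetric in $i$ and $j$, the map $\phi \colon S_{\overline{\mathcal{K}}_{n}} \DistTo \Sigma_{D_{n}}$ is a well-defined bijection.

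It remains to verify that $\phi$ carries $R_{\overline{\mathcal{K}}_{n}}$ bijectively onto $R_{D_{n}}$. The forward inclusion is a direct check on the two families of triangles: the triple of $\Delta_{ijk}$ maps to $[\varepsilon_{i} - \varepsilon_{j}, \varepsilon_{j} - \varepsilon_{k}, \varepsilon_{i} - \varepsilon_{k}]$, which vanishes under the signs $+,+,-$ exactly as in Lemma~\ref{AnIsomorphism}, while the triple of $\Delta^{i}_{jk}$ maps to $[\varepsilon_{i} + \varepsilon_{j}, \varepsilon_{i} + \varepsilon_{k}, \varepsilon_{j} - \varepsilon_{k}]$, which vanishes because $(\varepsilon_{i} + \varepsilon_{j}) - (\varepsilon_{i} + \varepsilon_{k}) - (\varepsilon_{j} - \varepsilon_{k}) = 0$. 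Thus every triple of $R_{\overline{\mathcal{K}}_{n}}$ is sent to a genuine sign-relation of $D_{n}$.

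The step I expect to be the main obstacle is surjectivity, i.e.\ showing that $D_{n}$ admits \emph{no} sign-relations beyond these two families. To control this I would introduce the linear functional $f := \sum_{i} \varepsilon_{i}^{*}$, which takes the value $2$ on each sum root $\varepsilon_{i} + \varepsilon_{j}$ and the value $0$ on each difference root $\varepsilon_{i} - \varepsilon_{j}$. Applying $f$ to a relation $\pm \alpha \pm \beta \pm \gamma = 0$ shows that the multiset of $f$-values must admit a signed sum equal to zero; since neither $\{2,2,2\}$ nor $\{2,0,0\}$ can (the achievable signed sums being $\{\pm 2, \pm 6\}$ and $\{\pm 2\}$ respectively), any relation involves either three difference roots or exactly two sum roots and one difference root. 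A short case analysis then fixes the shape of each: three difference roots must form $\{\varepsilon_{i} - \varepsilon_{j}, \varepsilon_{j} - \varepsilon_{k}, \varepsilon_{i} - \varepsilon_{k}\} = \phi(\Delta_{ijk})$, whereas in a relation with two sum roots these share exactly one index $a$, forcing the shape $\{\varepsilon_{a} + \varepsilon_{b}, \varepsilon_{a} + \varepsilon_{c}, \varepsilon_{b} - \varepsilon_{c}\}$, which is $\phi$ applied to the triangle $\Delta^{a}_{bc}$. Since the apex index $a$ is recovered as the unique common index of the two sum roots, the three triangles $\Delta^{a}_{bc}$, $\Delta^{b}_{ac}$, $\Delta^{c}_{ab}$ yield three distinct relations; hence $\phi$ induces a bijection $R_{\overline{\mathcal{K}}_{n}} \DistTo R_{D_{n}}$, giving the isomorphism $\mathcal{T}(\overline{\mathcal{K}}_{n}) \simeq \mathcal{T}(D_{n})$ and therefore $P(\overline{\mathcal{K}}_{n}) = P(D_{n})$.
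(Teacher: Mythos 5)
Your proposal is correct and follows essentially the same route as the paper: the same dictionary $e_{ij} \leftrightarrow \varepsilon_{i} - \varepsilon_{j}$, $e^{i}_{j} \leftrightarrow \varepsilon_{i} + \varepsilon_{j}$, with the two families of triangles $\Delta_{ijk}$ and $\Delta^{i}_{jk}$ matched to the corresponding families of sign relations. The only difference is that the paper simply asserts the list of triples of $\mathcal{T}(D_{n})$, whereas you justify that enumeration (via the functional $\sum_{i}\varepsilon_{i}^{*}$), which is a welcome strengthening rather than a departure.
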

\begin{proof}
By definition, $D_{n}$ is the configuration of vectors $\big\{ \pm (\varepsilon_{i} -
\varepsilon_{j}), \pm(\varepsilon_{i}+\varepsilon_{j}) \ \mid \ 1 \leq i<j \leq n \big\}
\subset \mathbb{R}^{n}$, where $\{ \varepsilon_{i} \ \mid \ 1 \leq i \leq n \}$ is 
the standard basis in the vector space $\mathbb{R}^{n}$. The triples of the ternary 
relation $\mathcal{T}(D_{n})$ have the following forms: $[\varepsilon_{i}-\varepsilon_{j},
\varepsilon_{j} - \varepsilon_{k}, \varepsilon_{i} - \varepsilon_{k}]$ 
for $1 \leq i < j < k \leq n$, and $[\varepsilon_{i} + \varepsilon_{j}, 
\varepsilon_{i} + \varepsilon_{k}, \varepsilon_{j} - \varepsilon_{k}]$, 
$[\varepsilon_{i} + \varepsilon_{j}, \varepsilon_{j} + \varepsilon_{k},
\varepsilon_{i} - \varepsilon_{k}]$, 
$[\varepsilon_{i} + \varepsilon_{k}, \varepsilon_{j} + \varepsilon_{k},
\varepsilon_{i} - \varepsilon_{j}]$ for $1 \leq i < j < k \leq n$.
Consider the correspondence similar to the previous lemma:
$e_{ij} \leftrightarrow \varepsilon_{i} - \varepsilon_{j}$,
$e^{i}_{j} \leftrightarrow \varepsilon_{i} + \varepsilon_{j}$,
$1 \leq i < j \leq n$.
From this correspondence we see that the first form of triples in $\mathcal{T}(D_{n})$ 
is given by triangle $\Delta_{ijk}$, while three other forms are given by the triangles
$\Delta^{i}_{jk}$, $\Delta^{j}_{ik}$, $\Delta^{k}_{ij}$, respectively. It gives us 
the isomorphism between considered ternary relations.
\end{proof}

Finally, consider the simplicial poset $\big(C\overline{\mathcal{K}}_{n}\big)^{(2)}$ 
which is the 2-skeleton of the cone over $\overline{\mathcal{K}}_{n}$. Along with vertices, 
edges and triangles of $\overline{\mathcal{K}}_{n}$ it contains for each 
$i \in \{1, .., n\}$ the edge $e_{i}$ which connects the vertex 
$v_{i} \in \overline{\mathcal{K}}_{n}(0)$ with the apex 
$v_{0} \in \big(C\overline{\mathcal{K}_{n}}\big)^{(2)}(0)=C\overline{\mathcal{K}}_{n}(0)$ 
and the following triangles: $\Delta_{ij}=\Delta_{ji}$ 
with facets $e_{i}, e_{ij}, e_{j}$, and $\Delta^{i}_{j} = \Delta^{j}_{i}$ 
with facets $e_{i}, e^{i}_{j}, e_{j}$ for any $1 \leq i \neq j \leq n$.

\begin{Lem}\label{BnIsomorphism}
	We have the isomorphism 
	$\mathcal{T}\Big( \big(C\overline{\mathcal{K}}_{n}\big)^{(2)} \Big) \simeq 
	\mathcal{T}(B_{n})$ implying 
	$P\Big( \big( C\overline{\mathcal{K}}_{n} \big)^{(2)} \Big) = P(B_{n})$.
\end{Lem}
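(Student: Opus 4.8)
The plan is to extend the correspondence established in Lemma~\ref{DnIsomorphism} by adjoining the short roots of $B_{n}$. Recall that $B_{n}$ is the configuration $D_{n} \sqcup \{\pm\varepsilon_{i} \mid 1 \leq i \leq n\}$, so at the level of underlying sets I would keep the bijection $e_{ij} \leftrightarrow \varepsilon_{i}-\varepsilon_{j}$, $e^{i}_{j} \leftrightarrow \varepsilon_{i}+\varepsilon_{j}$ coming from $D_{n}$ and add the rule $e_{i} \leftrightarrow \varepsilon_{i}$ for the apex edges. Since $\big(C\overline{\mathcal{K}}_{n}\big)^{(2)}(1)$ consists of the edges of $\overline{\mathcal{K}}_{n}$ together with exactly the $n$ apex edges $e_{1},\dots,e_{n}$, and $\Sigma_{B_{n}}$ consists of the antipodal pairs of $D_{n}$ together with the $n$ pairs $\{\pm\varepsilon_{i}\}$, this is visibly a bijection of the index sets.

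First I would classify all triples of $\mathcal{T}(B_{n})$. The decisive structural observation is a parity argument: consider the homomorphism $\mathbb{Z}^{n} \to \mathbb{Z}/2$ sending a vector to the sum of its coordinates modulo $2$. Every long root $\varepsilon_{i}\pm\varepsilon_{j}$ maps to $0$ while every short root $\varepsilon_{i}$ maps to $1$, so in any relation $\pm\alpha\pm\beta\pm\gamma = 0$ the number of short roots among $\alpha,\beta,\gamma$ is even, that is $0$ or $2$ (three short roots are in any case linearly independent and cannot cancel). Thus the triples split into two families. The relations with no short root are precisely the triples of $\mathcal{T}(D_{n})$, which by Lemma~\ref{DnIsomorphism} correspond to the triangles $\Delta_{ijk}$ and $\Delta^{i}_{jk}$ already present in $\overline{\mathcal{K}}_{n} \subset \big(C\overline{\mathcal{K}}_{n}\big)^{(2)}$.

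For the relations with exactly two short roots $\varepsilon_{i},\varepsilon_{j}$ (necessarily $i\neq j$) and one long root $\gamma$, the equation $\pm\varepsilon_{i}\pm\varepsilon_{j}\pm\gamma = 0$ forces $\gamma = \varepsilon_{i}-\varepsilon_{j}$ or $\gamma = \varepsilon_{i}+\varepsilon_{j}$ up to an overall sign. Under the proposed correspondence these yield exactly the triples $[e_{i},e_{j},e_{ij}]$ and $[e_{i},e_{j},e^{i}_{j}]$, which are precisely the triples read off from the new cone triangles $\Delta_{ij}$ (facets $e_{i},e_{ij},e_{j}$) and $\Delta^{i}_{j}$ (facets $e_{i},e^{i}_{j},e_{j}$). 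Conversely, every triangle of $\big(C\overline{\mathcal{K}}_{n}\big)^{(2)}$ is one of $\Delta_{ijk}, \Delta^{i}_{jk}, \Delta_{ij}, \Delta^{i}_{j}$, and I would check that each produces a valid $B_{n}$-relation, so the two lists of triples agree under the bijection and the ternary relations are isomorphic. The equality $P\big((C\overline{\mathcal{K}}_{n})^{(2)}\big) = P(B_{n})$ then follows formally.

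The main obstacle is completeness: ensuring that no $B_{n}$-relation is overlooked, so that the cone adds neither too few nor too many triangles. The parity homomorphism is what makes this clean, reducing the a priori four cases (by number of short roots) to the two that survive and ruling out spurious ``mixed'' relations. A secondary point needing care is that the two short roots $\varepsilon_{i},\varepsilon_{j}$ and the long root $\gamma$ index three genuinely distinct antipodal pairs, so that $[e_{i},e_{j},e_{ij}]$ and $[e_{i},e_{j},e^{i}_{j}]$ are honest triples of distinct elements of $\Sigma_{B_{n}}$; this holds because $i\neq j$.
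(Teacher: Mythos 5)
Your proof is correct, and it rests on the same core correspondence as the paper's: $e_{ij} \leftrightarrow \varepsilon_{i}-\varepsilon_{j}$, $e^{i}_{j} \leftrightarrow \varepsilon_{i}+\varepsilon_{j}$, $e_{i} \leftrightarrow \varepsilon_{i}$, with triples matched to the four types of triangles $\Delta_{ijk}$, $\Delta^{i}_{jk}$, $\Delta_{ij}$, $\Delta^{i}_{j}$ of $\big(C\overline{\mathcal{K}}_{n}\big)^{(2)}$. Where you genuinely diverge is in how the completeness of the list of triples is established. The paper simply writes out the six families of triples of $\mathcal{T}(B_{n})$ by inspection and matches them one by one, implicitly taking for granted that this enumeration is exhaustive. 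You instead derive the classification: the coordinate-sum parity homomorphism $\mathbb{Z}^{n} \to \mathbb{Z}/2$ forces an even number of short roots in any relation, the zero-short-root relations are exactly the $D_{n}$-relations already handled by Lemma~\ref{DnIsomorphism}, and the two-short-root relations are pinned down directly (including the check that the three antipodal pairs involved are distinct). This buys a rigorous guarantee that no $B_{n}$-relation is overlooked and makes the argument modular, since the long-root cases are inherited from the previous lemma rather than re-listed; the paper's direct enumeration is shorter but carries the small burden of being verified exhaustive by hand.
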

\begin{proof}
The root system $B_{n}$ is the configuration of vectors $\big\{ \pm(\varepsilon_{i} -
\varepsilon_{j}), \pm (\varepsilon_{i} + \varepsilon_{j}) \ \mid \ 1 \leq i < j \leq n
\big\} \sqcup \big\{ \pm \varepsilon_{i} \ \mid 1 \leq i \leq n \big\} \subset
\mathbb{R}^{n}$, where $\{ \varepsilon_{i} \ | \ 1 \leq i \leq n \}$ is again 
the standard basis of the vector space $\mathbb{R}^{n}$.
The triples in $\mathcal{T}(B_{n})$ are the following:
$[\varepsilon_{i}-\varepsilon_{j}, \varepsilon_{j}-\varepsilon_{k}, 
\varepsilon_{i}-\varepsilon_{k}]$,
$[\varepsilon_{i}+\varepsilon_{j}, \varepsilon_{i}+\varepsilon_{k},
\varepsilon_{j}-\varepsilon_{k}]$,
$[\varepsilon_{i}+\varepsilon_{j}, \varepsilon_{j}+\varepsilon_{k},
\varepsilon_{i}-\varepsilon_{k}]$,
$[\varepsilon_{i}+\varepsilon_{k}, \varepsilon_{j}+\varepsilon_{k},
\varepsilon_{i}-\varepsilon_{j}]$,
$[\varepsilon_{i}, \varepsilon_{j}, \varepsilon_{i}-\varepsilon_{j}]$,
$[\varepsilon_{i}, \varepsilon_{j}, \varepsilon_{i}+\varepsilon_{j}]$
for $1 \leq i < j < k \leq n$.
Consider the correspondence:
$e_{ij} \leftrightarrow \varepsilon_{i}-\varepsilon_{j}$,
$e^{i}_{j} \leftrightarrow \varepsilon_{i}+\varepsilon_{j}$,
$\varepsilon_{i} \leftrightarrow e_{i}$.
We see that in terms of this correspondence the triples above correspond to the triangles
$\Delta_{ijk}$, $\Delta^{i}_{jk}$, $\Delta^{j}_{ik}$, $\Delta^{k}_{ij}$, $\Delta_{ij}$,
$\Delta^{i}_{j}$ of $\big(C \overline{\mathcal{K}}_{n}\big)^{(2)}$.
Therefore, we have the isomorphism of ternary relations.
\end{proof}

\section{Minimal markings}\label{Minimal markings}

Note that for any ternary relation $\mathcal{T} = (\Sigma, R)$ the affine hull 
$\text{aff}(P(\mathcal{T})) = \Big\{ \sum\limits_{i=1}^{n} x^{i} = 1 \Big\} \subset
\mathbb{R}^{n}$, $n = \mid \Sigma \mid$ does not contain the origin. So the facets of
$P(\mathcal{T})$ uniquely correspond to extreme rays of its dual cone 
$P(\mathcal{T})^{\vee}$ which we define as follows:
$P(\mathcal{T})^{\vee} = \big\{ f \in (\mathbb{R}^{n})^{\vee} \ | \ f(v) \geq 0 \ \forall v
\in P(\mathcal{T}) \big\}$. Let $\textbf{1}^{1}$, .., $\textbf{1}^{n}$ be the basis of 
$(\mathbb{R}^{n})^{\vee}$ which is dual to the basis 
$\textbf{1}_{1}$, .., $\textbf{1}_{n}$ from Definition \ref{TernaryPolytope} and 
$x_1$, .., $x_{n}$ are corresponding coordinates of $(\mathbb{R}^{n})^{\vee}$. 
Then the dual cone $P(\mathcal{T})^{\vee}$ has the expression
\begin{equation}\label{ConeDefinition}
	P( \mathcal{T} )^{\vee} = \bigcap\limits_{[i, j, k] \in R} 
		\Big\{ x_{i} + x_{j} - x_{k} \geq 0, \ 
		x_{i} - x_{j} + x_{k} \geq	0, \ 
		-x_{i} + x_{j} + x_{k} \geq 0 \Big\}.
\end{equation}
For the case of a ternary relation $\mathcal{T} = \mathcal{T}(\mathcal{P})$ induced by a
2-dimensional simplicial poset $\mathcal{P}$ the elements of the dual cone
$P(\mathcal{P})^{\vee}$ can be considered as functions over the set $\mathcal{P}(1)$ of
edges of $\mathcal{P}$.

An extreme ray of $P(\mathcal{T})^{\vee}$ can be determined by the choice of subcollection 
of inequalities from (\ref{ConeDefinition}) which become strict, while other vanish.
In the case of cosmological polytopes, a subcollection of inequalities corresponds to 
a marking of the graph which is a configuration of marks placed on the edges of the graph. 
For giving edge the mark can be placed in three possible locations: in the middle of 
the edge or close to one from the end-nodes of the edge. This approach allows to study 
the structure of facets and, more generally, faces of cosmological polytopes in terms of 
combinatorics of the corresponding graphs. In particular, it is proved in \cite{A-HBP} 
using this approach that all facets of the cosmological polytope determined by 
the graph $G$ are in one-to-one correspondence with connected subgraphs of $G$. 
In this section we apply this approach to study the facet structure of ternary polytopes 
of the form $P(\mathcal{P})$.

Suppose that we have a 2-dimensional simplicial poset $\mathcal{P}$. We will call a pair 
$(\Delta, [e_1, e_2])$ consisted of the triangle $\Delta \in \mathcal{P}(2)$ and 
the unordered pair of two distinct edges $e_1, e_2 \in \mathcal{P}(1)$ of $\Delta$ 
by \textit{a corner} of $\mathcal{P}$. From this definition it follows that any 
triangle of $\mathcal{P}$ has exactly 3 corners. Also note that each corner of 
$\mathcal{P}$ corresponds to particular inequality defining the cone 
$P(\mathcal{P})^{\vee}$ in the following way. Let $e_1, e_2, e_3$ be the facets of 
the triangle $\Delta \in \mathcal{P}(2)$, then the corner $(\Delta, [e_1, e_2])$ 
corresponds to the inequality $x_1 + x_2 - x_3 \geq 0$ where $x_1, x_2, x_3$ are 
the corresponding dual variables.

\begin{figure}[t]
\includegraphics[width=.9\textwidth]{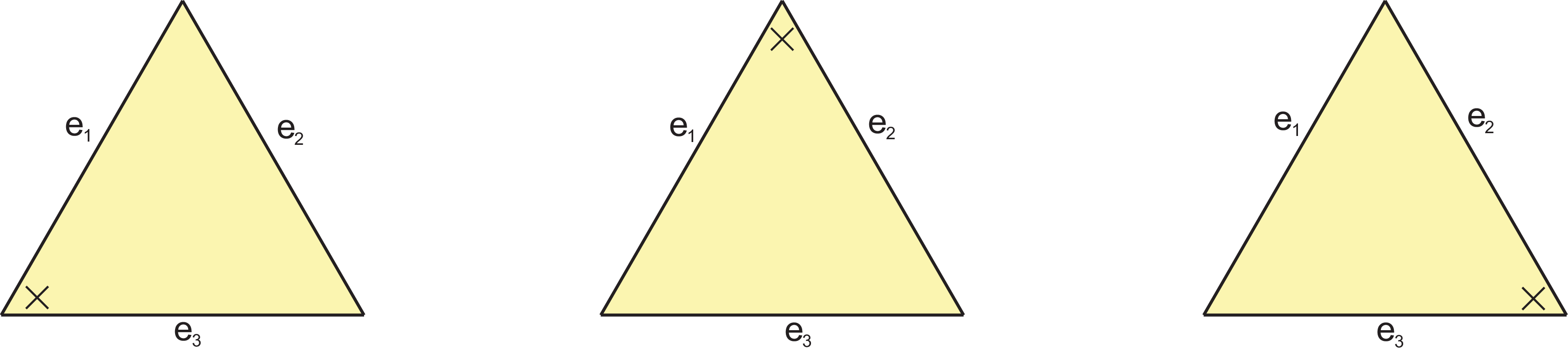}
	\caption{Possible locations of marks in the triangle. Note that these three markings 
	are the only minimal feasible markings of this poset.}
	\label{Possible_locations_for_marks}
\end{figure}

We will call collection of different corners of $\mathcal{P}$ \textit{a marking} 
of $\mathcal{P}$. For fixed marking $M$ we will say that given corner \textit{is marked
in} $M$ or mark is placed in that corner if this corner belongs to the marking 
(see Fig. \ref{Possible_locations_for_marks}). 
We will consider the subset $P(\mathcal{P})^{\vee}_{M} \subset P(\mathcal{P})^{\vee}$ 
consisted of those covectors of $P(\mathcal{P})^{\vee}$ which make the inequalities 
corresponding to the marked corners strict, while all other inequalities are vanishing. 
For example, the empty marking corresponds to the zero subspace, i. e. 
$P(\mathcal{P})^{\vee}_{\emptyset} = \{ 0 \} \subset P(\mathcal{P})^{\vee}$.
On the other hand, the maximal marking containing all corners of $\mathcal{P}$ corresponds 
to the interior set of the cone $P(\mathcal{P})^{\vee}$.

There are markings $M$ for which the subsets $P(\mathcal{P})^{\vee}_{M}$ are empty. 
We will call a marking $M$ \textit{feasible} if it defines non-empty subset
$P(\mathcal{P})^{\vee}_{M}$ and \textit{infeasible}, otherwise. One can easily provide 
a necessary condition for marking to be feasible. More precisely, take arbitrary edge 
$e \in \mathcal{P}(1)$ and consider two situation. The first one is that the marking $M$ 
contains a corner of the form $(\Delta, [e, e'])$ for some triangle 
$\Delta \in \mathcal{P}(2)$ with edges $e$, $e'$ and $e''$. In this case we say that the
triangle $\Delta$ \textit{is marked in $M$ along its edge} $e$. By definition, it means 
that the covectors in $P(\mathcal{P})^{\vee}_{M}$ satisfy the strict inequality 
$x_{e}+x_{e'}-x_{e''} > 0$. Taking into account the inequality 
$x_{e}-x_{e'}+x_{e''} \geq 0$ which is true in any case we obtain that $x_{e} > 0$. 
Now consider the second situation. Suppose that $\mathcal{P}$ contains some triangle
$\Delta$ with edges $e$, $e'$, $e''$ which is not marked along $e$. It means that
corners $(\Delta, [e, e'])$ and $(\Delta, [e, e''])$ are not marked, so we have two
equalities: $x_{e}+x_{e'}-x_{e''}=0$ and $x_{e}-x_{e'}+x_{e''}=0$. These equalities imply
that $x_{e}=0$. Therefore, we see that two discussed situations contradict each other.
Combinatorially, it means that for any feasible marking and for any edge $e$ of
$\mathcal{P}$ we have either all triangles having $e$ as a facet are marked along the edge
$e$ or none of them is marked along $e$. If for giving marking this condition is satisfied
we will call it \textit{locally feasible} marking
(see in Fig. \ref{Not_locally_admissible_markings} the typical examples of non locally
feasible markings). Also we wil say that the edge $e$ \textit{is marked in} $M$ if all 
triangles having it as a facet are marked in $M$ along $e$. Note that this condition is 
not sufficient in general and there are locally feasible markings which are infeasible. 
For giving simplicial poset $\mathcal{P}$ we denote the set of all markings of 
$\mathcal{P}$ by $\mathcal{M}(\mathcal{P})$, the set of feasible markings by 
$\mathcal{M}_{\text{f}}(\mathcal{P})$ and the set of locally feasible markings by 
$\mathcal{M}_{\text{loc}}(\mathcal{P})$. We have the following relations between 
these sets: 
$\mathcal{M}_{\text{f}}(\mathcal{P}) \subset \mathcal{M}_{\text{loc}}(\mathcal{P})
\subset \mathcal{M}(\mathcal{P})$.

\begin{figure}[t]
\includegraphics[width=.9\textwidth]{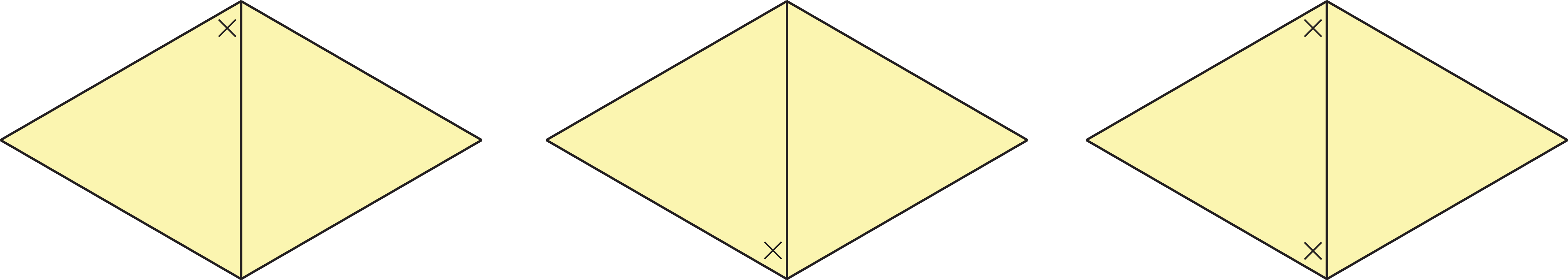}
\caption{Not locally feasible markings.}\label{Not_locally_admissible_markings}
\end{figure}

The set of all markings $\mathcal{M}(\mathcal{P})$ as well as its subsets
$\mathcal{M}_{\text{f}}(\mathcal{P})$ and $\mathcal{M}_{\text{loc}}(\mathcal{P})$ carry 
the partial order induced by the inlcusion of markings, namely, 
$M_{1} \leq M_{2}$ if and only if $M_{1} \subset M_{2}$. For each of
$\mathcal{M}_{\text{f}}(\mathcal{P})$ and $\mathcal{M}_{\text{loc}}(\mathcal{P})$ we
denote the subset of minimal elements by
$\mathcal{M}_{\text{f}}^{\text{min}}(\mathcal{P})$ and
$\mathcal{M}_{\text{loc}}^{\text{min}}(\mathcal{P})$, respectively.
One can show that markings from $\mathcal{M}_{\text{f}}^{\text{min}}(\mathcal{P})$ are in
one-to-one correspondence with extreme rays of the cone $P(\mathcal{P})^{\vee}$. 
In the cosmological case, i. e. for simplicial posets of the form $CG$, it is proved 
in \cite{A-HBP} that
$\mathcal{M}_{\text{f}}^{\text{min}}(CG) = \mathcal{M}_{\text{loc}}^{\text{min}}(CG)$,
and, moreover, the markings from $\mathcal{M}_{\text{loc}}^{\text{min}}(CG)$ are
in one-to-one correspondence with connected subgraphs of the graph $G$ 
(see in Fig. \ref{Minimal_admissible_markings} all minimal feasible markings of $CG$ where 
$G$ is the line graph consisted of two subsequent edges). However, in general we have
neither 
$\mathcal{M}_{\text{loc}}^{\text{min}}(\mathcal{P}) \subset 
\mathcal{M}_{\text{f}}^{\text{min}}(\mathcal{P})$
nor
$\mathcal{M}_{\text{f}}^{\text{min}}(\mathcal{P}) 
\subset \mathcal{M}_{\text{loc}}^{\text{min}}(\mathcal{P})$. One of the simplest examples 
for which it happens is the poset $\mathcal{K}_{5}$ associated with 
the root system $A_{4}$.

\begin{figure}[t]
\includegraphics[width=.9\textwidth]{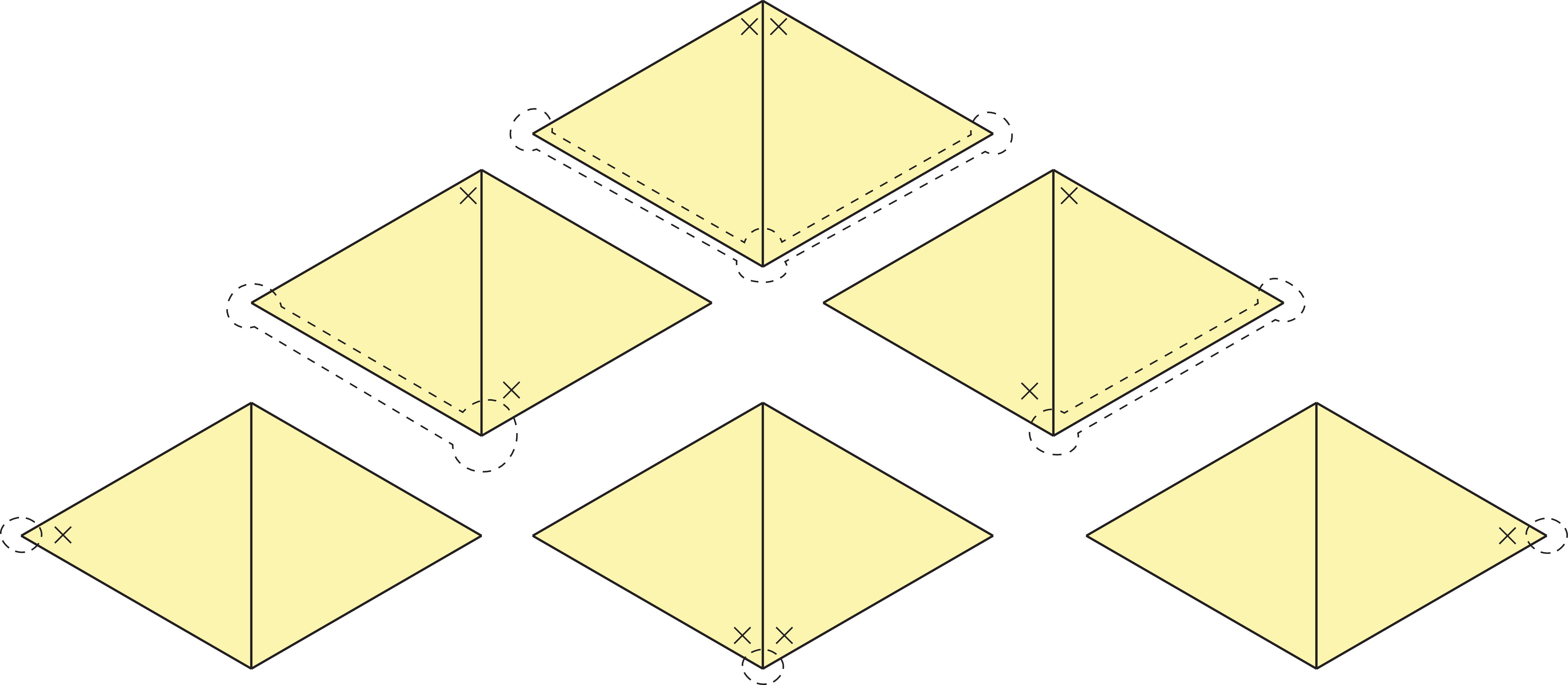}
\caption{All minimal feasible markings of the cone over the line graph with 2 subsequent 
	edges. The corresponding connected subgraphs of the initial graph are 
	highlighted by dashed line according to the theory of cosmological polytopes.}
	\label{Minimal_admissible_markings}
\end{figure}

Nevertheless, it is possible to extend the equality 
$\mathcal{M}_{\text{f}}^{\text{min}}(CG) = \mathcal{M}_{\text{loc}}^{\text{min}}(CG)$
to the case of of more general ternary polytopes if we restrict our consideration to some 
subclass of locally feasible markings. More precisely, consider the subset of markings 
$\mathcal{M}^{1}(\mathcal{P}) \subset \mathcal{M}(\mathcal{P})$ which have no more than 
one marked corner in each triangle. We will use the following notations:
$\mathcal{M}_{\text{loc}/\text{f}}^{1}(\mathcal{P}) := 
\mathcal{M}_{\text{loc}/\text{f}}(\mathcal{P}) \cap \mathcal{M}^{1}(\mathcal{P})$
and
$\mathcal{M}_{\text{loc}/\text{f}}^{1, \text{min}}(\mathcal{P}) := 
\mathcal{M}_{\text{loc}/\text{f}}^{\text{min}}(\mathcal{P}) \cap 
\mathcal{M}^{1}(\mathcal{P})$.

\begin{Lem}\label{OneAdmissibleMarkings}
	$\mathcal{M}_{\text{f}}^{1}(\mathcal{P}) = \mathcal{M}_{\text{loc}}^{1}(\mathcal{P})$ 
	and, consequently,
	$\mathcal{M}_{\text{f}}^{1, \text{min}}(\mathcal{P}) = 
		\mathcal{M}_{\text{loc}}^{1, \text{min}}(\mathcal{P})$.
\end{Lem}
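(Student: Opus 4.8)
The plan is to reduce everything to the single nontrivial inclusion $\mathcal{M}_{\text{loc}}^{1}(\mathcal{P}) \subset \mathcal{M}_{\text{f}}^{1}(\mathcal{P})$, which I will establish by writing down an explicit covector in $P(\mathcal{P})^{\vee}_{M}$ for each locally feasible marking $M$ with at most one marked corner per triangle. The opposite inclusion is free: intersecting the already proved containment $\mathcal{M}_{\text{f}}(\mathcal{P}) \subset \mathcal{M}_{\text{loc}}(\mathcal{P})$ with $\mathcal{M}^{1}(\mathcal{P})$ gives $\mathcal{M}_{\text{f}}^{1}(\mathcal{P}) \subset \mathcal{M}_{\text{loc}}^{1}(\mathcal{P})$ immediately, so the whole content of the first assertion sits in the reverse direction.

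First I would read off the local shape that the single-corner restriction forces on any covector $x \in P(\mathcal{P})^{\vee}_{M}$. Fix a triangle $\Delta$ with facets $e_{1}, e_{2}, e_{3}$. If $\Delta$ has no marked corner, the three equalities $x_{e_{1}} + x_{e_{2}} - x_{e_{3}} = x_{e_{1}} - x_{e_{2}} + x_{e_{3}} = -x_{e_{1}} + x_{e_{2}} + x_{e_{3}} = 0$ force $x_{e_{1}} = x_{e_{2}} = x_{e_{3}} = 0$. If instead the unique marked corner of $\Delta$ is $(\Delta, [e_{1}, e_{2}])$, the two unmarked corners give $x_{e_{1}} - x_{e_{2}} + x_{e_{3}} = 0$ and $-x_{e_{1}} + x_{e_{2}} + x_{e_{3}} = 0$, whence $x_{e_{3}} = 0$ and $x_{e_{1}} = x_{e_{2}}$, while the marked corner reads $x_{e_{1}} + x_{e_{2}} - x_{e_{3}} = 2 x_{e_{1}} > 0$. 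So on a marked triangle the two edges of the marked pair carry a common positive value and the third edge vanishes. This dictates the candidate: set $x_{e} = 1$ if $e$ is a marked edge of $M$ and $x_{e} = 0$ otherwise (edges lying in no triangle are unconstrained and may be given $0$).

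Next I would verify $x \in P(\mathcal{P})^{\vee}_{M}$ triangle by triangle. If $\Delta$ is marked along the corner $(\Delta, [e_{1}, e_{2}])$, then it is marked along $e_{1}$ and along $e_{2}$; local feasibility promotes this to \emph{every} triangle containing $e_{1}$ (respectively $e_{2}$) being marked along that edge, so $e_{1}, e_{2}$ are marked edges and $x_{e_{1}} = x_{e_{2}} = 1$. As the only marked corner of $\Delta$ is $(\Delta, [e_{1}, e_{2}])$, the triangle is not marked along $e_{3}$, so local feasibility forces no triangle containing $e_{3}$ to be marked along it, whence $e_{3}$ is not a marked edge and $x_{e_{3}} = 0$. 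The three inequalities of $\Delta$ then evaluate to $2 > 0$, $0$, $0$, which is exactly the strict/equality pattern prescribed by $M$; for a triangle with no marked corner the same reasoning yields $x_{e_{1}} = x_{e_{2}} = x_{e_{3}} = 0$ and all three inequalities become the required equalities. Hence $M$ is feasible. The one step needing care, and the only place local feasibility is genuinely used, is the consistency of the $0/1$ assignment on edges shared by several triangles: I expect this to be the crux, and it is settled precisely because local feasibility forbids an edge from being a marked-pair edge of one triangle while being the opposite edge of another, which is the sole mechanism by which the values $1$ and $0$ could clash.

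Finally, for the ``consequently'' clause I would invoke that $\mathcal{M}^{1}(\mathcal{P})$ is downward closed under inclusion: deleting corners from a marking with at most one corner per triangle preserves that property. Given this, the set equality $\mathcal{M}_{\text{f}}^{1}(\mathcal{P}) = \mathcal{M}_{\text{loc}}^{1}(\mathcal{P})$ together with $\mathcal{M}_{\text{f}}(\mathcal{P}) \subset \mathcal{M}_{\text{loc}}(\mathcal{P})$ upgrades to the minimal level by a short comparison: if $M \in \mathcal{M}_{\text{f}}^{\text{min}}(\mathcal{P}) \cap \mathcal{M}^{1}(\mathcal{P})$ and $M' \subsetneq M$ were locally feasible then $M' \in \mathcal{M}_{\text{loc}}^{1} = \mathcal{M}_{\text{f}}^{1}$ would be feasible, contradicting minimality of $M$, so $M \in \mathcal{M}_{\text{loc}}^{\text{min}}(\mathcal{P}) \cap \mathcal{M}^{1}(\mathcal{P})$, and symmetrically in the other direction. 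This gives $\mathcal{M}_{\text{f}}^{1, \text{min}}(\mathcal{P}) = \mathcal{M}_{\text{loc}}^{1, \text{min}}(\mathcal{P})$.
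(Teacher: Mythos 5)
Your proof is correct and takes essentially the same route as the paper: the nontrivial inclusion is established via exactly the same covector (value $1$ on marked edges, $0$ elsewhere), and the minimal-level equality is then deduced from the first one. The only difference is one of detail—you carry out the triangle-by-triangle verification and note the downward-closedness of $\mathcal{M}^{1}(\mathcal{P})$ explicitly, steps the paper compresses into ``it is easy to see'' and ``considering only minimal markings''.
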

\begin{proof}
Since $\mathcal{M}_{\text{f}}(\mathcal{P}) \subset \mathcal{M}_{\text{loc}}(\mathcal{P})$ 
we have that 
$\mathcal{M}_{\text{f}}^{1}(\mathcal{P}) \subset 
\mathcal{M}_{\text{loc}}^{1}(\mathcal{P})$. 
On the other hand, let $M$ be a marking from $\mathcal{M}_{\text{loc}}^{1}(\mathcal{P})$. 
Consider the covector $f(M) \in P(\mathcal{P})^{\vee}$ which is equal to 1 for edges
marked in $M$, while for any other edge it vanishes. It is easy to see that this covector 
satisfies all inequalities which determine the subset $P(\mathcal{P})^{\vee}_{M} \subset 
P(\mathcal{P})^{\vee}$, so we have $f(M) \in P(\mathcal{P})^{\vee}_{M} \neq \emptyset$. 
Therefore, we obtain that $M \in \mathcal{M}_{\text{f}}^{1}(\mathcal{P})$ and, 
consequently, $\mathcal{M}_{\text{loc}}^{1}(\mathcal{P}) \subset 
\mathcal{M}_{\text{f}}^{1}(\mathcal{P})$, so 
$\mathcal{M}_{\text{f}}^{1}(\mathcal{P}) = \mathcal{M}_{\text{loc}}^{1}(\mathcal{P})$.
Considering only minimal markings we also obtain that 
$\mathcal{M}_{\text{f}}^{1, \text{min}}(\mathcal{P}) = 
\mathcal{M}_{\text{loc}}^{1, \text{min}}(\mathcal{P})$.
\end{proof}

Markings from $\mathcal{M}_{\text{loc}}^{1}(\mathcal{P})$ can be described in the
cohomological way as follows. The chain group $C_{i}(\mathcal{P}, \mathbb{Z}_{2})$ is
the vector space over $\mathbb{Z}_{2} := \mathbb{Z}/2\mathbb{Z}$ generated by
$i$-dimensional simplices of $\mathcal{P}$. As usually the $i^{\text{th}}$ cochain group
is $C^{i}(\mathcal{P}, \mathbb{Z}_{2}) := C_{i}(\mathcal{P}, \mathbb{Z}_{2})^{\vee}$. Let
$\partial_{i}: C_{i}(\mathcal{P}, \mathbb{Z}_{2}) \longrightarrow 
C_{i-1}(\mathcal{P}, \mathbb{Z}_{2})$ be the boundary operator defined as follows: for any
$i$-dimensional simplex $\sigma$ we have $\partial_{i}(\sigma)$ equal to the sum over
$\mathbb{Z}_{2}$ of all elements of $C_{i-1}(\mathcal{P}, \mathbb{Z}_{2})$ corresponding
to facets of $\sigma$. The coboundary operator $\partial_{i}: 
C_{i}(\mathcal{P}, \mathbb{Z}_{2}) \longrightarrow C_{i+1}(\mathcal{P}, \mathbb{Z}_{2})$
is defined such that we have the identity $\partial^{i}(\delta)(\sigma) = 
\delta(\partial_{i}(\sigma))$ for any cochain $\delta \in 
C^{i}(\mathcal{P}, \mathbb{Z}_{2})$ and $(i+1)$-dimensional simplex $\sigma$. 
For any cocycle $\delta \in \text{ker}(\partial^{1})$ denote its support by 
$\text{supp}(\delta) \subset \mathcal{P}(1)$, i. e. the set of edges for which $\delta$ is 
not zero. Note that the group $\text{ker}(\partial^{1})$ has the partial order: 
$\delta_{1} \leq \delta_{2}$ if and only if 
$\text{supp}(\delta_{1}) \subseteq \text{supp}(\delta_{2})$.

\begin{Teo}\label{CocycleMarkings}
	\textit{There exists the canonical map 
		$f: \mathcal{M}_{\text{loc}}^{1}(\mathcal{P}) \DistTo 
			\text{ker}(\partial^{1})$ which is an isomorphism. 
		Moreover, a marking $M \in \mathcal{M}^{1}_{\text{loc}}(\mathcal{P})$ is minimal
		if and only if the 1-cocycle $f(M) \in \text{ker}(\partial^{1})$ is minimal.
	}
\end{Teo}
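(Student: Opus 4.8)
The plan is to define $f$ by sending a marking $M \in \mathcal{M}^{1}_{\mathrm{loc}}(\mathcal{P})$ to the $1$-cochain $\delta_{M} \in C^{1}(\mathcal{P}, \mathbb{Z}_{2})$ whose support is exactly the set of edges that are marked in $M$ (i.e. those edges $e$ such that every triangle containing $e$ is marked along $e$), and then to check in turn that this assignment lands in $\ker(\partial^{1})$, is a bijection, and respects both partial orders.

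First I would verify that $\delta_{M}$ is a cocycle by a local count at each triangle $\Delta \in \mathcal{P}(2)$ with facets $e_{1}, e_{2}, e_{3}$. Because $M \in \mathcal{M}^{1}(\mathcal{P})$, the triangle $\Delta$ carries at most one marked corner. If it carries none, then $\Delta$ is marked along no edge, and local feasibility (the all-or-none condition) forces each $e_{i}$ to be globally unmarked, so $\Delta$ meets $\mathrm{supp}(\delta_{M})$ in $0$ edges. If it carries the single corner $(\Delta, [e_{1}, e_{2}])$, then $\Delta$ is marked along $e_{1}$ and $e_{2}$ but not along $e_{3}$; local feasibility then makes $e_{1}, e_{2}$ globally marked and $e_{3}$ globally unmarked, so $\Delta$ meets $\mathrm{supp}(\delta_{M})$ in exactly $2$ edges. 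In either case $\delta_{M}(e_{1}) + \delta_{M}(e_{2}) + \delta_{M}(e_{3}) = 0$ in $\mathbb{Z}_{2}$, which is precisely $(\partial^{1}\delta_{M})(\Delta) = 0$.

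Next I would establish bijectivity through an explicit reconstruction. The computation above also shows that the support $S = \mathrm{supp}(\delta_{M})$ determines $M$ uniquely: a triangle with no edge in $S$ has no marked corner, while a triangle with two edges $e_{1}, e_{2} \in S$ (and $e_{3} \notin S$) must have its unique marked corner equal to $(\Delta, [e_{1}, e_{2}])$, since local feasibility makes it marked along both $e_{1}$ and $e_{2}$ but not $e_{3}$, and the $\mathcal{M}^{1}$ condition leaves no other option. This yields injectivity. For surjectivity, given any $\delta \in \ker(\partial^{1})$ with support $S$, the cocycle condition guarantees that every triangle meets $S$ in $0$ or $2$ edges; marking the corner $(\Delta, [e_{1}, e_{2}])$ exactly when $\Delta$ has its two $S$-edges $e_{1}, e_{2}$ produces a marking in $\mathcal{M}^{1}(\mathcal{P})$ whose set of globally marked edges is again $S$, and which is locally feasible because along each edge the triangles are either all marked (if the edge lies in $S$) or all unmarked (otherwise). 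Hence $f(M) = \delta$.

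Finally I would check that $f$ is an isomorphism of posets, i.e. $M_{1} \subseteq M_{2} \iff \mathrm{supp}(\delta_{M_{1}}) \subseteq \mathrm{supp}(\delta_{M_{2}})$. The forward implication is immediate: if an edge is globally marked in $M_{1}$, then every triangle through it is marked along it in $M_{1}$, hence in $M_{2} \supseteq M_{1}$, so the edge is globally marked in $M_{2}$. For the converse I would reuse the reconstruction: if $(\Delta, [e_{1}, e_{2}]) \in M_{1}$ then $e_{1}, e_{2} \in S_{1} \subseteq S_{2}$, and since $\delta_{M_{2}}$ is a cocycle the third edge $e_{3} \notin S_{2}$, so the uniqueness of the reconstructed corner forces $(\Delta, [e_{1}, e_{2}]) \in M_{2}$. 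The statement about minimal elements is then an immediate consequence, since an order isomorphism carries minimal elements to minimal elements in both directions. The main obstacle is not any single deep step but the careful bookkeeping that keeps the four notions ``marked corner'', ``marked along an edge'', ``globally marked edge'', and ``edge in the support'' in lockstep; the crux is the repeated use of local feasibility in both directions together with the at-most-one-corner hypothesis, which is exactly what pins the parity of marked edges per triangle to the cocycle condition.
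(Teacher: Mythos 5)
Your proof is correct, and its core — defining $f(M)$ as the $\mathbb{Z}_{2}$-cochain supported on the globally marked edges, the per-triangle parity check that it lies in $\ker(\partial^{1})$, and the triangle-by-triangle reconstruction of a marking from the support of a cocycle — is exactly the paper's argument for the bijection. The genuine difference is in how you handle the minimality claim: you upgrade the bijection to an isomorphism of posets by proving both monotonicity directions (the nontrivial converse coming from the cocycle relation forcing $e_{3} \notin S_{2}$ together with uniqueness of the reconstructed corner), and then deduce the correspondence of minimal elements formally. The paper instead proves the two implications separately: the forward one by the same support-inclusion observation you use, but the backward one ("$\delta$ not minimal $\Rightarrow f^{-1}(\delta)$ not minimal") by exploiting the $\mathbb{Z}_{2}$-linear structure to split a non-minimal cocycle as $\delta = \delta_{1} + \delta_{2}$ with disjoint supports and showing $f^{-1}(\delta_{1}) \subset f^{-1}(\delta)$. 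Your route gives a slightly stronger conclusion (a poset isomorphism, not merely a bijection that matches minimal elements) and avoids the algebraic decomposition entirely; the paper's splitting argument has the side benefit of exhibiting how a non-minimal marking decomposes, namely $f^{-1}(\delta) = f^{-1}(\delta_{1}) \sqcup f^{-1}(\delta_{2})$. One caveat, which you share with the paper rather than introduce: if some edge of $\mathcal{P}$ lies in no triangle, it is vacuously "marked" in every marking, so $f$ cannot hit cocycles vanishing on such an edge; both proofs tacitly assume every edge is a facet of at least one triangle.
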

\begin{proof}
	The map $f$ is given by the construction of the vector 
	$f(M) \in P(\mathcal{P})^{\vee}_{M}$ for any 
	$M \in \mathcal{M}_{\text{loc}}^{1}(\mathcal{P})$
	from Lemma \ref{OneAdmissibleMarkings} considered over $\mathbb{Z}_{2}$. To show 
	that we obtain 1-cocycle consider any triangle $\Delta \in \mathcal{P}(2)$ and check 
	that $f(M)(\partial_{2}(\Delta)) = 0$. Indeed, since $M$ has no more than one marked 
	corner in each triangle we have two situations: the triangle has no marked corners 
	at all, or it has exactly one marked corner. Also note that $M$ is locally feasible, 
	so if there is no marked corner along the edge in some triangle then there is no such 
	marked corner in any triangle having this edge as its face. It implies that in 
	the first situation the cochain $f(M)$ vanishes over all edges of the triangle 
	$\Delta$, so we have $f(M)(\partial_{2}(\Delta)) = 0$ in this case. In the second 
	situation the cochain $f(M)$ vanishes over the edge opposite to the marked corner and 
	it is equal to $1$ over other two edges, so we have again 
	$f(M)(\partial_{2}(\Delta)) = 0$. Therefore, the cochain $f(M)$ is indeed a 1-cocycle, 
	$f(M) \in \text{ker}(\partial^{1})$, so this construction gives us the map 
	$f:\mathcal{M}_{\text{loc}}^{1}(\mathcal{P})\longrightarrow \text{ker}(\partial^{1})$.

	To show that the map $f$ is bijective we construct the inverse map 
	$f^{-1}:\text{ker}(\partial^{1}) \longrightarrow 
		\mathcal{M}_{\text{loc}}^{1}(\mathcal{P})$.
	Let $\delta \in \textnormal{ker}(\partial^{1})$ be a 1-cocycle over $\mathcal{P}$.
	We construct the associated marking $f^{-1}(\delta)$ in the following way.
	Consider any triangle $\Delta \in \mathcal{P}(2)$ and denote its edges by $e_{1}$, 
	$e_{2}$ and $e_{3}$. Since $\delta$ is a cocycle we have the equality
	\begin{equation}\label{CocycleEquation}
		\partial^{1}(\delta)(\Delta) = \delta \big(\partial_{2}(\Delta)\big) = 
			\delta (e_{1} + e_{2} + e_{3}) = 
			\delta(e_{1}) + \delta(e_{2}) + \delta(e_{3}) = 0.
	\end{equation}
	There can be two situations over $\mathbb{Z}_{2}$: $\delta$ either vanishes over all 
	three edges $e_{1}$, $e_{2}$, $e_{3}$, or it does not vanish over exactly two of these 
	three edges, say, over $e_1$ and $e_2$. For the first case we have that no one corner 
	of $\Delta$ is marked in $f^{-1}(\delta)$. For the second case we have that 
	the triangle $\Delta$ has exactly one corner which is marked in $f^{-1}(\delta)$, 
	namely, $(\Delta, [e_1, e_2])$. Therefore, we obtain the marking $f^{-1}(\delta)$ 
	which has no more than one marked corner in each triangle, so 
	$f^{-1}(\delta) \in \mathcal{M}^{1}(\mathcal{P})$. Now show that this marking is 
	locally feasible. Consider any two triangles $\Delta_1$, $\Delta_2$ with a common 
	edge $e$. Assume that the edges of $\Delta_{1}$ are $e_{1}$, $e_{2}$ and $e_{3}$, 
	while the edges of $\Delta_{2}$ are $e_{3}$, $e_{4}$ and $e$. Then we have 
	the following two equalities over $\mathbb{Z}_2$:
	\begin{equation}\label{CocycleAdmissibilityEqs}
		\delta\big(\partial_{2}(\Delta_1)\big) = 
			\delta(e_{1}) + \delta(e_{2}) + \delta(e) = 0, \ \ \
		\delta\big(\partial_{2}(\Delta_2)\big) = 
			\delta(e_{3}) + \delta(e_{4}) + \delta(e) = 0. 
	\end{equation}
	Suppose that the edge $e$ is marked in some of these two triangles, say $\Delta_{1}$.
	By construction it follows that $\delta(e) \neq 0$ and exactly one of two values 
	$\delta(e_{1})$ or $\delta(e_{2})$ also does not vanish. From the fact that
	$\delta(e) \neq 0$ and the second equality of (\ref{CocycleAdmissibilityEqs}) we 
	obtain that exactly one of the values $\delta(e_{3})$ or $\delta(e_{4})$ does not 
	vanish as well. It means that the triangle $\Delta_{2}$ also contains a marked corner 
	along the edge $e$, so $e$ is marked in $\Delta_{2}$. Therefore, we see that 
	the marking $f^{-1}(\delta)$ is locally feasible, i. e. $f^{-1}(\delta) \in
	\mathcal{M}_{\text{loc}}^{1}(\mathcal{P})$. From the construction it also can be seen
	that $f \circ f^{-1}$ and $f^{-1} \circ f$ are identity maps.

	Next, proceed to the proof of the second statement of the theorem. Assume that 
	the marking $M$ is not minimal in $\mathcal{M}^{1}_{\text{loc}}(\mathcal{P})$. 
	It means that there exists some marking 
	$M' \in \mathcal{M}^{1}_{\text{loc}}(\mathcal{P})$ such that $M' \subset M$. Consider 
	the corresponding 1-cocycle $\delta' = f(M') \in \text{ker}(\partial^{1})$. Take any 
	edge $e$ belonging to the support of $\delta'$, i. e. $e\in\text{supp}(\delta')$.
	By the construction of $M'$ any triangle having $e$ as its facet is marked along this 
	edge. Since $M' \subset M$ the same can be said about marking $M$, so 
	$e \in \text{supp}(f(\delta))$. Therefore, we obtain that 
	$\text{supp}(f(M')) \subset \text{supp}(f(M))$ which means that $f(M)$ is not minimal.

	Now suppose that $\delta \in \text{ker}(\partial^{1})$ is not minimal. In particular, 
	it means that there is the decomposition $\delta = \delta_1 + \delta_2$ such that 
	$\delta_1, \delta_2 \in \text{ker}(\partial^{1})$ and 
	$\text{supp}(\delta_{1}) \cap \text{supp}(\delta_{2}) = \emptyset$. 
	Consider the induced locally feasible markings 
	$f^{-1}(\delta_{1}), f^{-1}(\delta_{2})\in \mathcal{M}^{1}_{\text{loc}}(\mathcal{P})$. 
	Assume that $f^{-1}(\delta_{1})$ has a marked corner in some triangle $\Delta$ between 
	edges $e_{1}$ and $e_{2}$, i. e. $(\Delta, [e_1, e_2]) \in f^{-1}(\delta_{1})$. 
	It implies that $\delta_{1}(e_1) = \delta_{1}(e_2) = 1$, so 
	$\{e_1, e_2\} \subset \text{supp}(\delta_{1})$.
	Since $\text{supp}(\delta) = \text{supp}(\delta_{1}) \sqcup \text{supp}(\delta_{2})$ 
	we have that $\{ e_{1}, e_{2} \} \subset \text{supp}(\delta)$, but it necessary leads 
	to that the corner $(\Delta, [e_1, e_2])$ is also marked in $f^{-1}(\delta)$. 
	Therefore, we have the inclusion $f^{-1}(\delta_{1}) \subset f^{-1}(\delta)$, so 
	$f^{-1}(\delta)$ is not minimal in  $\mathcal{M}^{1}_{\text{loc}}(\mathcal{P})$. 
	Actually, using the similar ideas one can show that 
	$f^{-1}(\delta) = f^{-1}(\delta_{1}) \sqcup f^{-1}(\delta_{2})$.
\end{proof}

Now suppose that the first cohomology vanishes, $\text{H}^{1}(\mathcal{P}, \mathbb{Z}_2) 
\simeq 0$, then we have the canonical isomorphism $\text{ker}~\partial^{1} \simeq 
\text{im}~\partial^{0}$. It means that for each 1-cocycle $\delta$ over $\mathbb{Z}_2$ 
there exists a subset of vertices $S \subset \mathcal{P}(0)$ considered as a 0-cocycle 
such that $\delta = \partial^{0}(S)$. On the other hand, we have the relation 
$\partial^{0}(S) + \partial^{0}(\overline{S}) = \partial^{0}(\mathcal{P}^{(0)}) = 0$ 
where $\overline{S}$ is the complement of the subset $S$ in $\mathcal{P}(0)$, 
so we also have that $\delta = \partial^{0}(S) = \partial^{0}(\overline{S})$. 
It follows that the structure of $\mathcal{M}_{\text{loc}}^{1}(\mathcal{P})$ for this case 
can be described in purely graph theoretical notions. To do this recall some definitions. 
Let $G$ be a graph and assume that we have some partition $\{ S, \overline{S}\}$ of 
the set of vertices $V(G)$ into two disjoint subsets. 
The subset $H\big( \{ S, \overline{S}\} \big) \subset E(G)$ consisted of the edges which 
have exactly one end-point in the set $S$ and other end-point in the set $\overline{S}$ 
is called a \textit{cutset} induced by the partition $\{ S, \overline{S}\}$. All possible 
cutsets of the graph $G$ form a vector space over $\mathbb{Z}_{2}$ which is called 
a \textit{cut space}. The relation of inclusions of sets induces the partial order on 
the cut space. Now, using this terminology, we can formulate the following:

\begin{Cor}\label{ParticularCaseOneMarkings}
	\textit{If $\text{\emph H}^{1}(\mathcal{P}, \mathbb{Z}_2) \simeq 0$ then 
		$\mathcal{M}^{1}_{\text{loc}}(\mathcal{P})$ is isomorphic to the cut space of 
		the graph $\mathcal{P}^{(1)}$. Moreover, minimal markings from
		$\mathcal{M}^{1}_{\text{loc}}(\mathcal{P})$ correspond to minimal cutsets.}
\end{Cor}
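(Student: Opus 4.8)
The plan is to build the desired poset isomorphism as a composite of three order-preserving identifications. First, Theorem \ref{CocycleMarkings} already supplies an order-preserving bijection $f\colon \mathcal{M}^{1}_{\text{loc}}(\mathcal{P}) \DistTo \text{ker}(\partial^{1})$ which moreover sends minimal markings to minimal cocycles. Second, under the hypothesis $\text{H}^{1}(\mathcal{P}, \mathbb{Z}_2) \simeq 0$ we have $\text{ker}(\partial^{1}) = \text{im}(\partial^{0})$, exactly as recorded in the discussion preceding the statement. Third, since every $\mathbb{Z}_2$-valued cochain is determined by its support and the order on $\text{ker}(\partial^{1})$ is by definition the inclusion of supports, the assignment $\delta \mapsto \text{supp}(\delta)$ is an order-preserving injection whose image I will identify with the cut space of $\mathcal{P}^{(1)}$. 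Composing these three maps would yield the claimed isomorphism.

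The crux of the argument is identifying $\text{im}(\partial^{0})$ with the cut space. A $0$-cochain is a subset $S \subset \mathcal{P}(0)$, and by definition $\partial^{0}(S)$ takes on an edge $e$ with end-vertices $v_{1}, v_{2}$ the value $S(v_{1}) + S(v_{2})$ over $\mathbb{Z}_2$, which is nonzero precisely when exactly one of $v_{1}, v_{2}$ lies in $S$. Hence
\begin{equation*}
	\text{supp}\big(\partial^{0}(S)\big) = H\big( \{ S, \overline{S} \} \big),
\end{equation*}
the cutset of the graph $\mathcal{P}^{(1)}$ induced by the partition $\{ S, \overline{S} \}$. Here I would use that $\mathcal{P}(0)$ and $\mathcal{P}(1)$ are exactly the vertices and edges of $\mathcal{P}^{(1)}$, so that $\partial^{0}$ agrees with the graph coboundary and its image, read through supports, is exactly the set of all cutsets, i.e. the cut space. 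Since the relation $\partial^{0}(S) = \partial^{0}(\overline{S})$ matches the equality $H(\{ S, \overline{S} \}) = H(\{ \overline{S}, S \})$, the correspondence is consistent, and the support map restricts to a bijection $\text{im}(\partial^{0}) \to \{ \text{cutsets} \}$ which preserves inclusion in both directions.

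For the \emph{moreover} clause I would simply transport minimality along the composite: Theorem \ref{CocycleMarkings} identifies minimal markings in $\mathcal{M}^{1}_{\text{loc}}(\mathcal{P})$ with minimal elements of $\text{ker}(\partial^{1})$, and since the remaining two identifications are poset isomorphisms for the support/inclusion order, a minimal cocycle corresponds exactly to a minimal cutset. I expect the only delicate point to be the bookkeeping around well-definedness, namely verifying that the support map on $\text{ker}(\partial^{1})$ is genuinely injective — which holds because over $\mathbb{Z}_2$ a cochain is recovered from its support — and noticing that surjectivity onto the \emph{full} cut space is precisely where the vanishing $\text{H}^{1}(\mathcal{P}, \mathbb{Z}_2) \simeq 0$ enters; without it one would obtain only those cutsets lying in $\text{ker}(\partial^{1})$ rather than the whole cut space. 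Everything else reduces to the elementary computation of $\text{supp}(\partial^{0}(S))$ above.
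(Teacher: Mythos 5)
Your main argument is correct and follows essentially the same route as the paper's own proof: compose the isomorphism $f$ of Theorem \ref{CocycleMarkings} with the identification $\text{ker}(\partial^{1}) \simeq \text{im}(\partial^{0})$ provided by the vanishing of $\text{H}^{1}(\mathcal{P}, \mathbb{Z}_{2})$, and then read off $\text{supp}\big(\partial^{0}(S)\big) = H\big(\{S, \overline{S}\}\big)$, with minimality transported through the order-preserving identifications. The only cosmetic difference is in the direction ``cutset gives cocycle'': the paper verifies it by citing that a cutset meets every cycle in an even number of edges, while you get it from the inclusion $\text{im}(\partial^{0}) \subseteq \text{ker}(\partial^{1})$, i.e.\ from $\partial^{1} \circ \partial^{0} = 0$, which is the same fact in cochain language and arguably cleaner.

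However, your closing remark about where the hypothesis enters is backwards. The inclusion $\text{im}(\partial^{0}) \subseteq \text{ker}(\partial^{1})$ holds for \emph{every} 2-dimensional simplicial poset (each vertex of a triangle lies on exactly two of its edges, whence $\partial^{1} \circ \partial^{0} = 0$), so every cutset is the support of a cocycle, namely of $\partial^{0}(S)$; surjectivity onto the full cut space is therefore automatic and costs nothing. What $\text{H}^{1}(\mathcal{P}, \mathbb{Z}_{2}) \simeq 0$ actually buys is the reverse inclusion $\text{ker}(\partial^{1}) \subseteq \text{im}(\partial^{0})$, i.e.\ that the support of \emph{every} cocycle is a cutset --- equivalently, that the composite $\text{supp} \circ f$ lands in the cut space at all. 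Indeed, since a $\mathbb{Z}_{2}$-cochain is determined by its support, a cocycle $\delta$ has support equal to a cutset $H\big(\{S,\overline{S}\}\big)$ if and only if $\delta = \partial^{0}(S)$; so when $\text{H}^{1} \neq 0$ the failure is not that one obtains ``only those cutsets lying in $\text{ker}(\partial^{1})$'' (all cutsets lie there, always), but that $\mathcal{M}^{1}_{\text{loc}}(\mathcal{P})$ contains markings, coming from non-exact cocycles, whose supports are not cutsets. This is precisely the situation for $\overline{\mathcal{K}}_{n}$ and $\big(C\overline{\mathcal{K}}_{n}\big)^{(2)}$ noted in the paper. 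Since your second step does correctly invoke the hypothesis to get $\text{ker}(\partial^{1}) = \text{im}(\partial^{0})$, this slip does not invalidate the deduction, but the stated account of the role of the hypothesis should be corrected.
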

\begin{proof}
	For any marking $M \in \mathcal{M}_{\text{loc}}^{1}(\mathcal{P})$ we have that
	$f(M)=\partial^{0}(S) = \partial^{0}(\overline{S}) \in \text{ker}~\partial^{1} \simeq 
	\text{im}~\partial^{0}$ for some $S \subset \mathcal{P}(0)$, so we obtain the
	partition $\{ S, \overline{S} \}$ of $\mathcal{P}(0)$. Moreover, $\text{supp}(f)$ 
	consists from those edges which connect a vertex belonging to $S$ with another vertex 
	which does not belong to $S$. 
	Therefore, $\text{supp}(f) = H\big( \{ S, \overline{S}\} \big)$ is a cutset of 
	the graph $\mathcal{P}^{(1)}$ induced by the partition $\{S, \overline{S}\}$.

	Now if we have a cutset $H(\{S, \overline{S}\})$ for some partition 
	$\{S, \overline{S}\}$ then we can consider a unique cochain $\delta$ which has this 
	cutset as its support. From the graph theory we know that any cutset has 
	an intersection of even size with any cycle. It implies that $\delta$ is 1-cocycle 
	over $\mathbb{Z}_{2}$ such that $\delta =\partial^{0}(S)=\partial^{0}(\overline{S})$.
	So we obtain the locally feasible marking 
	$f^{-1}(\delta) \in \mathcal{M}_{\text{loc}}^{1}(\mathcal{P})$.
	The consistency of the notion of minimality follows immediatelly from 
	Theorem \ref{CocycleMarkings}.  
\end{proof}

Remind again that the facets of the cosmological polytope $P(G)$ are in one-to-one
correspondence with connected subgraphs of the graph $G$. Let us show that the obtained
result for ternary polytope is consisted with this statement for cosmological polytopes.
Firstly, note that minimality of a cutset $H\big( \{ S, \overline{S} \}\}\big)$ of 
a graph $G$ is equivalent to the following requirement. For any subset $U \subset V(G)$
denote the induced subgraph of $G$ which consists of the vertices from $U$ and all edges 
of $G$ which connect the vertices from $U$ by $G_{U} \subset G$. Using this notation 
one can state that the cutset $H\big( \{ S, \overline{S} \}\}\big)$ is minimal 
if and only if the induced subgraphs $G_{S}$ and $G_{\overline{S}}$ are both connected. 

As it was shown in the previous section the ternary relation $\mathcal{T}(G)$ generated 
by a graph $G$ coincides with ternary relation $\mathcal{T}\big( CG \big)$ generated by 
the cone over the graph $G$. By construction of a cone over a graph we have that for any 
partition $\{S, \overline{S}\}$ of the set of vertices $CG(0)$ of the graph $CG^{(1)}$, 
i. e. $CG(0) = S \sqcup \overline{S}$, only one subset, $S$ or $\overline{S}$, does not 
contain the apex of $CG$. It means that any partition of $CG(0)$ uniquely determines 
subset of vertices of the original graph $G$. Moreover, the condition of minimality of 
the considered cutset is equivalent to the connectivity condition of the corresponding 
induced subgraph of $G$. So minimal cutsets of the graph $CG^{(1)}$ are in one-to-one 
correspondence with connected induced subgraphs of $G$. Therefore, we have the corollary:

\begin{Cor}\label{GraphMinimalMarkings}
	For any graph $G$ we have
	$\mathcal{M}_{\text{loc}}^{1, \text{min}}(CG)$ $\simeq$
	$\{ S \subset V(G) \ | \ G_{S} \text{ is connected} \}$.
\end{Cor}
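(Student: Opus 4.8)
The plan is to combine Corollary \ref{ParticularCaseOneMarkings} with the graph-theoretic facts about cones that were assembled in the paragraph preceding the statement. The essential reduction is that the cone $CG$ has vanishing first cohomology, so that the isomorphism $\mathcal{M}^{1}_{\text{loc}}(CG) \simeq \text{cut space of } (CG)^{(1)}$ of Corollary \ref{ParticularCaseOneMarkings} applies and then carries minimal markings to minimal cutsets. The content of the present corollary is the further identification of minimal cutsets of $(CG)^{(1)}$ with connected induced subgraphs of $G$.

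First I would verify the hypothesis of Corollary \ref{ParticularCaseOneMarkings}, namely $\text{H}^{1}(CG, \mathbb{Z}_2) \simeq 0$. This is where I expect the only real subtlety to lie: the statement of Corollary \ref{GraphMinimalMarkings} does not re-state this hypothesis, so it must be checked that it always holds for a cone. Geometrically $CG$ is a cone over a $1$-dimensional complex, hence contractible, so its reduced cohomology vanishes in all degrees; combinatorially, every edge $e_{ij}$ of $G$ together with the two cone-edges $e_i$, $e_j$ spans a triangle $\Delta_{ij}$ in $CG$, which means every $1$-cycle of $(CG)^{(1)}$ is a boundary of $2$-simplices, forcing $\text{H}^{1}(CG, \mathbb{Z}_2) \simeq 0$. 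Once this is established, Corollary \ref{ParticularCaseOneMarkings} gives that $\mathcal{M}^{1, \text{min}}_{\text{loc}}(CG)$ is in bijection with minimal cutsets of the graph $(CG)^{(1)}$.

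Next I would translate minimal cutsets of $(CG)^{(1)}$ into connected induced subgraphs of $G$, reusing the two observations recorded just above the statement. A cutset of $(CG)^{(1)}$ is induced by a partition $\{S, \overline{S}\}$ of the vertex set $CG(0)$, and exactly one of the two blocks contains the apex $v_0$; say $v_0 \in \overline{S}$, so that $S \subset V(G)$ is an honest subset of the original vertices. By the minimality criterion stated in the text, the cutset $H(\{S, \overline{S}\})$ is minimal if and only if both induced subgraphs $(CG)^{(1)}_{S}$ and $(CG)^{(1)}_{\overline{S}}$ are connected. Since $v_0$ is adjacent to every vertex of $G$, the block $\overline{S}$ containing the apex always induces a connected subgraph; hence the minimality condition collapses to the single requirement that $(CG)^{(1)}_{S} = G_{S}$ be connected. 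This produces the desired bijection $\mathcal{M}^{1, \text{min}}_{\text{loc}}(CG) \simeq \{ S \subset V(G) \mid G_{S} \text{ is connected} \}$.

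The main obstacle, as noted, is the cohomology vanishing: one must be careful that the correspondence between partitions of $CG(0)$ and subsets of $V(G)$ is genuinely a bijection onto connected induced subgraphs rather than an over- or under-count, and in particular that the two choices of apex-free block do not cause double counting (they do not, precisely because each partition singles out one apex-free block $S$). The remaining steps are bookkeeping: checking that $G_{\overline{S}}$ connectivity is automatic from the apex's universal adjacency, and that the empty subset and the whole vertex set are handled consistently so that the claimed isomorphism of posets is faithful.
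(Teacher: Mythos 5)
Your proposal is correct and follows essentially the same route as the paper: apply Corollary \ref{ParticularCaseOneMarkings} to $CG$ and then identify minimal cutsets of $(CG)^{(1)}$ with connected induced subgraphs of $G$ via the apex-free block of each partition, with connectivity of the apex-containing block being automatic. The only difference is that you explicitly verify the hypothesis $\text{H}^{1}(CG, \mathbb{Z}_2) \simeq 0$ (via contractibility of the cone), a point the paper leaves implicit; this is a welcome clarification rather than a deviation.
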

\noindent
Note that the markings from $\mathcal{M}_{\text{loc}}^{1, \text{min}}(CG)$ do not exhaust 
in general all possible minimal (locally) feasible markings, i. e.
$\mathcal{M}_{\text{loc}}^{1, \text{min}}(CG) \subsetneq
\mathcal{M}_{\text{loc}}^{\text{min}}(CG) = \mathcal{M}_{\text{f}}^{\text{min}}(CG)$.
Indeed, $CG$ can have minimal feasible markings for which some triangles have 2 marked
corners. One can show that such markings are associated with connected subgraphs of $G$ 
which are not induced by any subset $S \subset V(G)$. 

Now let us look to the case of root systems. We can use the characterization of minimality 
of cutsets in terms of induced subgraphs to prove the following statement. Since
$\mathcal{K}_{n}$ is the face poset of the standard $(n-1)$-dimensional simplex we have
that $\text{H}^{1}(\mathcal{K}_{n}, \mathbb{Z}_{2}) \simeq 0$. Moreover, since
$\mathcal{K}_{n}^{(1)}$ coincides with the complete graph $K_{n}$ we have that for any
vertex subset $S \subset \mathcal{K}_{n}(0)$ the corresponding induced subgraph is always
connected. Therefore, applying Corollary \ref{ParticularCaseOneMarkings} to this case 
and taking into account that $\mathcal{T}(A_{n-1}) \simeq \mathcal{T}(\mathcal{K}_{n})$ 
we obtain the following:

\begin{Cor}\label{AnMinimalOneMarkings}
	\textit{For any $n > 0$ the Graev polytope $P(A_{n-1})$ has the family of facets which
		are in one-to-one correspondence with subsets of $\mathcal{K}_{n}(0) =
		\{ v_1, .., v_{n} \}$ considered up to taking compliment, 
		i. e. $S \sim \mathcal{K}_{n}(0) \setminus S$.
	}
\end{Cor}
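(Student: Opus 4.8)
The plan is to assemble the conclusion from the chain of results already established, specializing each of them to the poset $\mathcal{K}_n$. First I would invoke Lemma \ref{AnIsomorphism} to replace $P(A_{n-1})$ by $P(\mathcal{K}_n)$, so that it suffices to produce the claimed family of facets for the latter. Since facets of $P(\mathcal{K}_n)$ correspond to extreme rays of $P(\mathcal{K}_n)^{\vee}$, and these in turn to the minimal feasible markings $\mathcal{M}_{\text{f}}^{\text{min}}(\mathcal{K}_n)$, I would single out the subfamily coming from markings with at most one marked corner per triangle, namely $\mathcal{M}_{\text{f}}^{1,\text{min}}(\mathcal{K}_n)$; by definition these sit inside $\mathcal{M}_{\text{f}}^{\text{min}}(\mathcal{K}_n)$ and hence index genuine facets.

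Next I would reduce to the cut space. The poset $\mathcal{K}_n$ is the $2$-skeleton of the face poset of the standard simplex $\Delta_{n-1}$, so $\text{H}^{1}(\mathcal{K}_n, \mathbb{Z}_2) \simeq 0$ as noted above. This lets me apply Corollary \ref{ParticularCaseOneMarkings}, identifying $\mathcal{M}_{\text{loc}}^{1}(\mathcal{K}_n)$ with the cut space of $\mathcal{K}_n^{(1)}$ and, crucially, its minimal markings with the minimal cutsets of that graph. By Lemma \ref{OneAdmissibleMarkings} the feasible and locally feasible $1$-markings coincide, so $\mathcal{M}_{\text{f}}^{1,\text{min}}(\mathcal{K}_n) = \mathcal{M}_{\text{loc}}^{1,\text{min}}(\mathcal{K}_n)$ is exactly the set of minimal cutsets.

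The decisive step, which is where the special nature of $A_{n-1}$ enters, is to observe that $\mathcal{K}_n^{(1)} \simeq K_n$ is the complete graph. Hence for any vertex subset $S$ the induced subgraph on $S$ is again complete, and so connected whenever $S \neq \emptyset$. By the characterization recalled just before the corollary---a cutset $H(\{S,\overline{S}\})$ is minimal precisely when both $G_S$ and $G_{\overline{S}}$ are connected---every nontrivial cutset of $K_n$ is automatically minimal. Consequently the minimal cutsets are in bijection with the nontrivial partitions $\{S, \overline{S}\}$, that is, with subsets $S \subset \mathcal{K}_n(0)$ taken up to the complement $S \sim \mathcal{K}_n(0)\setminus S$; the improper partition corresponds to the zero cocycle and the empty marking, which yields no facet and is discarded.

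Finally I would assemble the composite bijection: a subset $S$ (up to complement) maps to the minimal cutset $H(\{S,\overline{S}\})$, thence to the minimal $1$-marking $f^{-1}(\partial^{0}(S))$ via Theorem \ref{CocycleMarkings}, and finally to the corresponding extreme ray of $P(\mathcal{K}_n)^{\vee} = P(A_{n-1})^{\vee}$, i.e. to a facet of $P(A_{n-1})$. I do not anticipate any serious obstacle, since the statement is essentially a specialization of Corollary \ref{ParticularCaseOneMarkings}; the one point genuinely demanding care is the completeness argument of the preceding paragraph, as it is exactly what forces \emph{every} cutset to be minimal and thereby removes the connectivity constraint that would otherwise cut down the correspondence.
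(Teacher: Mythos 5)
Your proposal is correct and follows essentially the same route as the paper: its own (terse) argument is exactly the chain Lemma \ref{AnIsomorphism}, vanishing of $\text{H}^{1}(\mathcal{K}_{n}, \mathbb{Z}_{2})$, Corollary \ref{ParticularCaseOneMarkings} (together with Lemma \ref{OneAdmissibleMarkings}), and the observation that every induced subgraph of the complete graph $\mathcal{K}_{n}^{(1)} \simeq K_{n}$ is connected, so every nontrivial cutset is minimal. Your write-up is if anything slightly more careful, since you explicitly discard the trivial partition corresponding to the zero cocycle and the empty marking.
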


\noindent
Note that the first cohomology over $\mathbb{Z}_{2}$ does not vanish for the simplicial
posets $\overline{\mathcal{K}}_{n}$ and $\big( C\overline{\mathcal{K}}_{n} \big)^{(2)}$, 
so we cannot apply Corollary \ref{ParticularCaseOneMarkings} to them and need to find out
auxiliary cocycles explicitly. Corollary \ref{AnMinimalOneMarkings} describes all
facets of $P(A_{n})$ for $n = 2$ and $n = 3$. However, for $n \geq 4$ it is not true.
Moreover, there are facets of $P(A_{n}), n \geq 4$ whose corresponding markings do not 
even belong to $\mathcal{M}^{\text{min}}_{\text{loc}}(\mathcal{K}_{n+1})$. 
The next section is devoted to the study of some class of such facets.

\section{Extreme metrics}\label{Extremal metrics}

\subsection{Metrics on 2-dimensional simplicial posets}

Note that any edge of $\mathcal{K}_{n}$ is uniquely determined by two end-vertices. 
It means that a covector $\mu \in P(\mathcal{K}_{n})^{\vee}$ considered as a function over
the set of edges uniquely determines the symmetric function 
$d_{\mu}:\mathcal{K}_{n}(0) \times \mathcal{K}_{n}(0) \longrightarrow \mathbb{R}$
such that 
$d_{\mu}(v_{i}, v_{j}) = d_{\mu}(v_{j}, v_{i}):=\mu(e_{ij})$.
Moreover, the linear inequalities which define the cone (\ref{ConeDefinition}) can be 
rewritten in the following form:
\begin{equation}
	\begin{cases}
		d_{\mu}(v_{i}, v_{j}) + d_{\mu}(v_{j}, v_{k}) \geq d_{\mu}(v_{i}, v_{k}), \\
		d_{\mu}(v_{i}, v_{k}) + d_{\mu}(v_{k}, v_{j}) \geq d_{\mu}(v_{i}, v_{j}), \\
		d_{\mu}(v_{j}, v_{i}) + d_{\mu}(v_{i}, v_{k}) \geq d_{\mu}(v_{j}, v_{k}).
	\end{cases}
\end{equation}
Symmetric two-point function satisfying these inequalities is called 
\textit{a metric}\footnote{We do not require the condition 
$d(x, y) = 0 \Leftrightarrow x = y$ in the definition of a metric.} on the finite set
consisted of $n$ points. The set of all metrics on $n$ points is a convex polyhedral cone
in $\mathbb{R}^{{n \choose 2}}$ which is called \textit{the metric cone} and denoted by
$M_{n}$. In opposite, any metric $d$ over the finite set $\{v_1, .., v_n\}$ uniquely 
determines the covector $\mu_{d} \in P(\mathcal{K}_{n})^{\vee}$ defined as 
$\mu_{d}(e_{ij}):=d(v_{i},v_{j})$. Therefore, we have the following statement:
\begin{Lem}\label{MetricConeIsomorphism}
	\textit{The dual cone $P(\mathcal{K}_{n})^{\vee} = P(A_{n-1})^{\vee}$ is isomorphic to
	the metric cone $M_{n}$.
	}
\end{Lem}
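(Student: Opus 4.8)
The plan is to exhibit the two maps $\mu \mapsto d_{\mu}$ and $d \mapsto \mu_{d}$ described just above the statement as mutually inverse linear maps that carry one cone onto the other. First I would make explicit the underlying linear isomorphism of ambient spaces. Since every edge of $\mathcal{K}_{n}$ has the form $e_{ij}$ with $1 \leq i < j \leq n$ and is uniquely determined by its pair of end-vertices, the assignment $\textbf{1}^{e_{ij}} \leftrightarrow \{i,j\}$ identifies the dual space $(\mathbb{R}^{{n \choose 2}})^{\vee}$ containing $P(\mathcal{K}_{n})^{\vee}$ with the space $\mathbb{R}^{{n \choose 2}}$ of symmetric two-point functions on $\{v_{1}, \ldots, v_{n}\}$. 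Under this identification the assignments $\mu \mapsto d_{\mu}$ and $d \mapsto \mu_{d}$ are exactly the two coordinate relabellings, hence linear bijections inverse to one another.

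Next I would check that this linear isomorphism maps the inequalities defining $P(\mathcal{K}_{n})^{\vee}$ precisely onto the triangle inequalities defining $M_{n}$. By Definition \ref{PosetTernaryRelation} the triples of $\mathcal{T}(\mathcal{K}_{n})$ are the $[e_{ij}, e_{jk}, e_{ik}]$ coming from the triangles $\Delta_{ijk}$, and since $\mathcal{K}_{n} = \mathcal{B}_{n}^{(2)}$ contains every $3$-element subset $\{i,j,k\}$, these triples are in bijection with the $3$-subsets of $\{1,\ldots,n\}$. For each such triple the three inequalities supplied by (\ref{ConeDefinition}) read
\begin{equation*}
	x_{e_{ij}} + x_{e_{jk}} - x_{e_{ik}} \geq 0, \quad
	x_{e_{ij}} - x_{e_{jk}} + x_{e_{ik}} \geq 0, \quad
	-x_{e_{ij}} + x_{e_{jk}} + x_{e_{ik}} \geq 0,
\end{equation*}
which under the coordinate identification become exactly the three triangle inequalities among $v_{i}, v_{j}, v_{k}$. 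Letting $\{i,j,k\}$ range over all $3$-subsets thus reproduces the full defining system of $M_{n}$.

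Finally I would conclude. A covector $\mu$ lies in $P(\mathcal{K}_{n})^{\vee}$ if and only if it satisfies all the inequalities (\ref{ConeDefinition}), which by the previous step holds if and only if $d_{\mu}$ satisfies every triangle inequality, i.e. $d_{\mu} \in M_{n}$; conversely $\mu_{d} \in P(\mathcal{K}_{n})^{\vee}$ for every $d \in M_{n}$. Hence the mutually inverse linear maps $\mu \mapsto d_{\mu}$ and $d \mapsto \mu_{d}$ restrict to a bijection between the cones, giving $P(\mathcal{K}_{n})^{\vee} \simeq M_{n}$; the equality $P(\mathcal{K}_{n})^{\vee} = P(A_{n-1})^{\vee}$ is then immediate from Lemma \ref{AnIsomorphism}.

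The main obstacle, though a mild one, is the bookkeeping in the middle step: one must verify simultaneously that every triangle inequality of $M_{n}$ arises from some triple of $R_{\mathcal{K}_{n}}$ (surjectivity, which relies on $\mathcal{K}_{n}$ containing all $3$-subsets) and that (\ref{ConeDefinition}) introduces no constraint beyond the triangle inequalities. Everything else is the formal transport of a single coordinate identification through the cone's description.
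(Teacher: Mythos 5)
Your proposal is correct and follows essentially the same route as the paper, which establishes the lemma via exactly this coordinate identification $\mu \mapsto d_{\mu}$, $d \mapsto \mu_{d}$ in the discussion immediately preceding the statement, matching the inequalities of (\ref{ConeDefinition}) with the triangle inequalities. Your write-up merely makes explicit the bookkeeping (linearity, mutual inverseness, and the bijection between triples of $R_{\mathcal{K}_{n}}$ and $3$-subsets) that the paper leaves implicit.
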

\noindent
From this perspective we can think that the cone $P(\mathcal{P})^{\vee}$ for any
simplicial poset $\mathcal{P}$ is some generalization of the metric cone. More precisely,
consider the following definition.

\begin{Definition}\label{PosetMetric}
Let $\mathcal{P}$ be a 2-dimensional simplicial poset. The non-negative function 
$d: \mathcal{P}(1) \longrightarrow \mathbb{R}_{\geq 0}$ is called a metric on 
$\mathcal{P}$ if it satisfies the following system of inequalities for any triangle
$\Delta \in \mathcal{P}(2)$ with facets $e_1$, $e_2$, $e_3$:
\begin{equation}\label{TriangleInequalitiesOnComplexes}
	\begin{cases}
		d(e_1) + d(e_2) \geq d(e_3), \\
		d(e_2) + d(e_3) \geq d(e_1), \\
		d(e_1) + d(e_3) \geq d(e_2).
	\end{cases}
\end{equation}
\end{Definition}
\noindent
There are many generalizations of the notion of a metric (see \cite{DL}), but it seems
that Definition \ref{PosetMetric} did not appear in the mathematical literature 
previously. It is evident that the set of all such metrics on $\mathcal{P}$ coincides with 
the cone $P(\mathcal{P})^{\vee}$. Further in this section we will use this terminology 
applied to the cone $P(\mathcal{P})^{\vee}$.

Remind that the facets of the ternary polytope $P(\mathcal{P})$ are in one-to-one
correspondence with extreme rays of its dual cone $P(\mathcal{P})^{\vee}$. The non-zero
covectors of these extreme rays can be seen as \textit{extreme metrics} on $\mathcal{P}$,
i. e. those metrics which cannot be obtained as the sum of two distinct metrics not
proportional to the original one. In particular, we have the following statement:

\begin{Cor}\label{ExtremalMetricsFacetsCorrespondence}
	\textit{Facets of the Graev polytope $P(A_{n-1})$ are in one-to-one correspondence
	with extreme metrics on $n$ points (up to homothety).
	}
\end{Cor}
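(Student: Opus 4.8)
The plan is to chain together three correspondences, each of which is already available from the material preceding the statement. First I would invoke the general duality recorded at the start of Section \ref{Minimal markings}: because $\text{aff}(P(A_{n-1}))$ lies in the hyperplane $\{\sum_{i=1}^{n} x^{i} = 1\}$ and hence misses the origin, the facets of $P(A_{n-1})$ are in bijection with the extreme rays of the dual cone $P(A_{n-1})^{\vee}$. Concretely, each facet is cut out by a supporting covector, the collection of such covectors for a fixed facet forms a single ray of $P(A_{n-1})^{\vee}$, and the facet-defining (dimension-maximal) property of the supporting hyperplane matches exactly the extremality (one-dimensionality) of the corresponding ray.

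Second, I would apply Lemma \ref{MetricConeIsomorphism}, which supplies a linear isomorphism $P(A_{n-1})^{\vee} \simeq M_{n}$ onto the metric cone. Since this is the restriction to the cones of an isomorphism of the ambient vector spaces, it preserves the entire face lattice and in particular carries extreme rays bijectively to extreme rays. Hence the extreme rays of $P(A_{n-1})^{\vee}$ correspond precisely to the extreme rays of $M_{n}$, i.e. to the extreme rays of the cone of metrics on $n$ points.

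Third, I would unwind the definition of an extreme metric given immediately before the statement. A non-zero covector spanning an extreme ray is, under the isomorphism of Lemma \ref{MetricConeIsomorphism}, exactly a metric on $\{v_{1}, \dots, v_{n}\}$ that cannot be decomposed as a sum of two metrics not proportional to it; and passing from a generating covector to the ray it spans is precisely the identification of such metrics up to homothety (positive rescaling). Thus extreme rays of $M_{n}$ biject with extreme metrics on $n$ points up to homothety, and composing the three bijections yields the asserted correspondence between facets of $P(A_{n-1})$ and extreme metrics.

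I do not expect a genuine obstacle here, since the argument is an assembly of Lemma \ref{MetricConeIsomorphism} with the facet/extreme-ray duality rather than a fresh computation. The only point deserving a moment of care is confirming that the isomorphism of Lemma \ref{MetricConeIsomorphism} respects the cone structure finely enough to send extreme rays to extreme rays; as it is an isomorphism of the ambient spaces restricting to an isomorphism of the cones, it automatically preserves one-dimensional faces, so this step is immediate and the corollary follows.
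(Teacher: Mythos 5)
Your proposal is correct and follows exactly the paper's (implicit) argument: the paper derives this corollary by combining the facet/extreme-ray duality stated at the opening of Section~\ref{Minimal markings} (valid because $\text{aff}(P(\mathcal{T}))$ misses the origin), the isomorphism $P(A_{n-1})^{\vee} \simeq M_{n}$ of Lemma~\ref{MetricConeIsomorphism}, and the definition of extreme metrics as generators of extreme rays considered up to homothety. No difference in substance or structure.
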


\noindent
A lot of mathematical papers is devoted to the problem of description of extreme metrics 
on $n$ points (see \cite{BD, Av, Av2, Gr}). For small values of $n$ the extreme metrics 
are fully classified. However, the complete classification of extreme metrics for 
arbitrary $n$ is still not done. Several types of extreme metrics are known. The simplest
type is given by so-called cut-metrics on $n$ points which are always extreme. Their
construction is following. Consider the set of $n$ points as the set of vertices of 
the complete graph $K_{n}$, so any metric on these $n$ points is a non-negative function 
over the set $E(K_{n})$ of edges of $K_{n}$ satisfying the triangle inequalities. For any 
partition $V(K_{n}) = S \sqcup \overline{S}$ we can consider the corresponding cutset 
$H(\{ S, \overline{S}) \subset E(K_{n})$ as it was discussed in the previous section. 
This cut-set defines the metric $d_{\{ S, \overline{S} \}}$ which is equal to 1 on edges 
connecting vertices from different subsets $S$ and $\overline{S}$, while for all other 
edges it is equal to zero 
(see in Fig. \ref{Cut_metrics_on_three_points} all cut-metrics on 3 points). 
It is easy to see that all triangle inequalities are valid, so $d_{\{ S, \overline{S} \}}$
is indeed a metric. Moreover, one can prove that it is extreme, so it generates 
an extreme ray of $M_{n}$. Note that this is equivalent to the statement of 
Corollary \ref{AnMinimalOneMarkings}, so 
the result of Corollary \ref{AnMinimalOneMarkings} is actually not new. On the other hand, 
Corollary \ref{AnMinimalOneMarkings} is the consequence of Theorem \ref{CocycleMarkings} 
for the partial case $\mathcal{P}=\mathcal{K}_{n}$. Therefore, it is natural to consider 
metrics on $\mathcal{P}$ corresponding to locally feasible markings from
$\mathcal{M}_{\text{loc}}^{1}(\mathcal{P})$ as generalization of cut-metrics on $n$ 
points.

\begin{figure}[t]
\includegraphics[width=.9\textwidth]{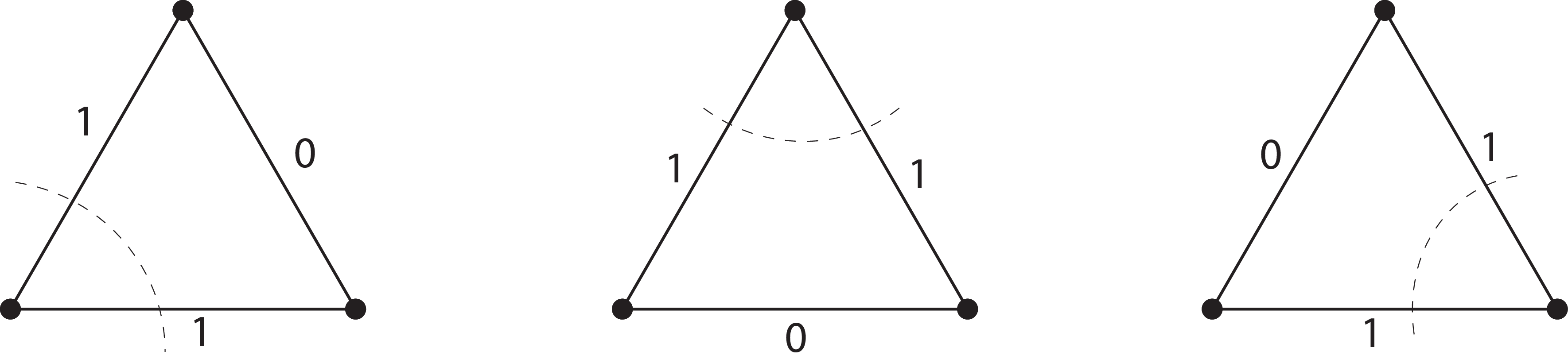}
\caption{Cut-metrics on three points.}\label{Cut_metrics_on_three_points}
\end{figure}

More complicated type of extreme metrics is given by certain graph metrics induced by
subgraphs of $K_{n}$. More precisely, assume that we have some connected subgraph 
$G \subset K_{n}$ containing all vertices of $K_{n}$. This subgraph induces 
\textit{the graph metric} $d_{G}$ over $n$ points. By definition the metric $d_{G}$ 
between any two vertices is equal to the length of the shortest path in $G$ connecting 
these vertices. However, the metric constructed in this way is not necessarily extreme. 
Avis provided in \cite{Av} the sufficient condition for a subgraph $G$ which guarantees that 
the corresponding graph metric is extreme. The first occurence of such extreme metrics 
happens for $5$ points, namely, the graph metric induced by the bipartite graph 
$K_{3,2} \subset K_{5}$ is extreme (see Fig. \ref{K_3_2_metric}). This condition can be 
formulated in terms of specific coloring properties of a graph.
We will extend Avis' approach to the case of metrics on simplicial posets.

\begin{figure}[t]
\includegraphics[width=.5\textwidth]{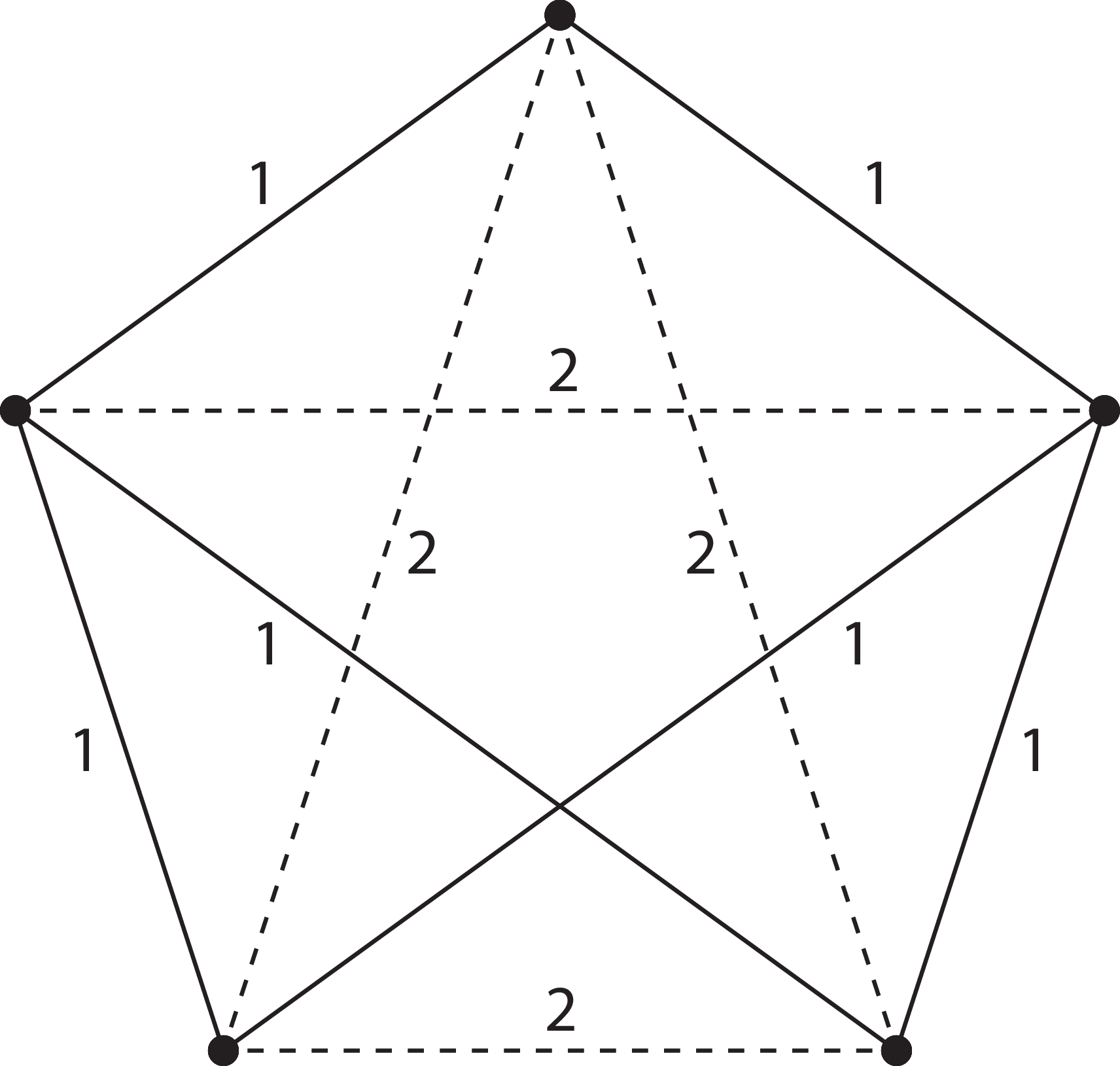}
\caption{The extreme metric on 5 points associated with the subgraph $K_{3,2}$ of
	$K_{5}$. The subgraph $K_{3,2}$ is highlighted by solid line.}\label{K_3_2_metric}
\end{figure}

\subsection{Generalization of the Avis' result}

Consider arbitrary pure 2-dimensional simplicial poset  $\mathcal{P}$. A pair consisted of 
the sequence of edges $e_{1}.. e_{n}$ and the sequence of vertices $v_{1}.. v_{n+1}$ such 
that $e_{i}$ connects $v_{i}$ with $v_{i+1}$, $1 \leq i \leq n$, is called \textit{a walk} 
on $\mathcal{P}$. Note that edges and vertices in a walk can repeat. For simplicity, 
we will often denote a walk just by the sequence of edges $e_{1}.. e_{n}$ not explicitly 
indicating the sequence of vertices. \textit{The length of a walk} is the number of edges 
used in it. \textit{A path} is a walk which has no repetitions in edges. A path is called 
\textit{simple} if it has no repetitions in vertices. \textit{A cycle} is a path with 
the first and the last vertices coincided, i. e. $v_1 = v_{n+1}$. For any two walks 
$p=e_1 .. e_n$ and $p'=e_1'..e_m'$ such that the last vertex of $p$ coincides with 
the first vertex of $p'$ we can consider their \textit{concatenation} which is the walk 
giving by the sequence of edges $e_1..e_n e_1' .. e_m'$ and denoted by $p \cup p'$. 

Suppose now that a walk $p$ has two subsequent edges $e_i$, $e_{i+1}$ for some 
$1 \leq i \leq n$ such that there exists an edge $\tilde{e} \in \mathcal{P}(1)$ with 
the property that the edges $e_{i}$, $e_{i+1}$ and $\tilde{e}$ together form 
a triangle $\Delta \in \mathcal{P}(2)$. The modified walk 
$\tilde{p} = e_{1} ... e_{i-1} \tilde{e} e_{i+2} ... e_{n}$ is called 
an \textit{elementary contraction} of the walk $p$ along $\Delta$. Also we will say that 
a walk $p$ is a \textit{contraction} of another walk $p'$ along $\mathcal{P}$, or that 
$p'$ \textit{can be contracted} to $p$ if the walk $p$ can be obtained from $p'$ by 
sequence of elementary contractions along triangles of $\mathcal{P}$. 
From the construction it follows that the set of vertices of any contraction of a walk is 
always the subset of the set of vertices of the walk itself. 

For any edge $e$ of $\mathcal{P}$ we will say that a walk $p$ is 
\textit{a bypassing walk} of the edge $e$ if $p$ connects the vertices of $e$. Moreover, 
if such a walk can be contracted to $e$ we call it \textit{a contractable bypassing walk} 
of the edge $e$. Further, we will say that a subgraph $G \subset \mathcal{P}^{(1)}$ is 
\textit{a bypassing subgraph in} $\mathcal{P}$ if it contains all vertices of 
$\mathcal{P}$ and all edges of $\mathcal{P}$ have contractable bypassing walks completely 
contained in $G$.

For any subgraph $G \subset \mathcal{P}^{(1)}$ and edge $e \in \mathcal{P}(1)$ denote by 
$B_{G}^{\mathcal{P}}(e)$ the set of shortest walks among all walks in $G$
contractable to the edge $e$ along $\mathcal{P}$. It is evident that if
$B_{G}^{\mathcal{P}}(e) \neq \emptyset$ then all walks in $B_{G}^{\mathcal{P}}(e)$ have
the same length which we denote by $d_{G}^{\mathcal{P}}(e) \in \mathbb{Z}_{> 0}$. If $G$
is a bypassing graph in $\mathcal{P}$ then we have $B_{G}^{\mathcal{P}}(e) \neq \emptyset$
for any edge $e \in \mathcal{P}(1)$, so the number $d_{G}^{\mathcal{P}}(e)$ is
well-defined for any edge of $\mathcal{P}$. Since $B_{G}^{\mathcal{P}}(e) = \{ e \}$ for
$e \in E(G)$ we obtain that $d_{G}^{\mathcal{P}}$ is equal to 1 on the edges of
the subgraph $G$, while for all other edges of $\mathcal{P}$ it is strictly bigger than 1.   

\begin{Lem}\label{ShortestDistanceMetric}
	For any bypassing subgraph $G \subset \mathcal{P}^{(1)}$ the function 
	$d_{G}^{\mathcal{P}}$ defined over the edges of $\mathcal{P}$ is a metric on 
	$\mathcal{P}$. We call $d_{G}^{\mathcal{P}}$ the graph metric on $\mathcal{P}$ induced
	by $G \subset \mathcal{P}^{(1)}$.
\end{Lem}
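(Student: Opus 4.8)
The plan is to verify Definition \ref{PosetMetric} directly. Non-negativity and well-definedness are already in hand: since $G$ is bypassing, $B_{G}^{\mathcal{P}}(e) \neq \emptyset$ for every edge $e$, so $d_{G}^{\mathcal{P}}(e)$ is a well-defined positive integer and $d_{G}^{\mathcal{P}}$ maps $\mathcal{P}(1)$ into $\mathbb{R}_{\geq 0}$. Thus the entire content is the three triangle inequalities attached to each triangle $\Delta \in \mathcal{P}(2)$. Fixing facets $e_1, e_2, e_3$ of $\Delta$, it suffices to prove $d_{G}^{\mathcal{P}}(e_1) + d_{G}^{\mathcal{P}}(e_2) \geq d_{G}^{\mathcal{P}}(e_3)$, the other two being the same statement after relabelling.

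The key construction is concatenation of shortest contractable bypassing walks. First I would fix notation for the incidences in $\Delta$: denote its vertices by $w_1, w_2, w_3$, where $w_i$ is the vertex not lying on $e_i$, so that $e_1 = \{w_2, w_3\}$, $e_2 = \{w_1, w_3\}$, $e_3 = \{w_1, w_2\}$. Choose $p_1 \in B_{G}^{\mathcal{P}}(e_1)$ and $p_2 \in B_{G}^{\mathcal{P}}(e_2)$, of lengths $d_{G}^{\mathcal{P}}(e_1)$ and $d_{G}^{\mathcal{P}}(e_2)$. Since a walk may be traversed in either direction, I orient $p_1$ to end at the common vertex $w_3$ and $p_2$ to begin at $w_3$; then the concatenation $p_1 \cup p_2$ is a walk in $G$ from $w_2$ to $w_1$ of length $d_{G}^{\mathcal{P}}(e_1) + d_{G}^{\mathcal{P}}(e_2)$.

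Next I would show that $p_1 \cup p_2$ is contractable to $e_3$ along $\mathcal{P}$, which gives $d_{G}^{\mathcal{P}}(e_3) \leq d_{G}^{\mathcal{P}}(e_1) + d_{G}^{\mathcal{P}}(e_2)$ by minimality of $d_{G}^{\mathcal{P}}(e_3)$. Because an elementary contraction touches only two consecutive edges, the contractions witnessing the contractibility of $p_1$ to $e_1$ and of $p_2$ to $e_2$ can be carried out inside the two disjoint segments of $p_1 \cup p_2$ without interfering, contracting the concatenation to the two-edge walk $e_1 e_2$ that runs $w_2 \xrightarrow{e_1} w_3 \xrightarrow{e_2} w_1$. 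Since $e_1$ and $e_2$ are now consecutive edges sharing $w_3$ and, together with $e_3$, are exactly the three facets of $\Delta$, one further elementary contraction along $\Delta$ replaces $e_1 e_2$ by $e_3$. Hence $p_1 \cup p_2$ contracts to $e_3$ and is contained in $G$, yielding the inequality; the remaining two follow by relabelling $e_1, e_2, e_3$.

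The only points requiring care — and where I expect the bookkeeping to live rather than any genuine obstacle — are that reversing a walk preserves contractibility to the same edge, so the orientation choices are legitimate, and that elementary contractions supported on an initial and a terminal segment of a walk may be applied independently. Both are immediate from the definitions: reversal carries a sequence of elementary contractions to a sequence of elementary contractions on the reversed walk, and contractions with disjoint supports act on non-overlapping pairs of edges and so commute. The main conceptual step is simply to pin down the incidences $e_i = \{w_j, w_k\}$ so that $p_1$ and $p_2$ are glued along the correct shared vertex $w_3$; once that labelling is fixed, the argument is forced.
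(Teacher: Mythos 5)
Your proposal is correct and is essentially the paper's own argument: the paper likewise concatenates shortest contractable bypassing walks of two facets of $\Delta$, contracts each segment to its edge, performs one final elementary contraction along $\Delta$, and invokes minimality (phrased there as a contradiction rather than directly, which is an immaterial difference). Your version is, if anything, slightly more careful, since you make explicit the reversal-of-orientation and disjoint-support bookkeeping that the paper's proof leaves implicit.
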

\begin{proof}
	Consider any triangle $\Delta \in \mathcal{P}(2)$ and denote its edges by 
	$e_1$, $e_2$, $e_3 \in \mathcal{P}(1)$. For each $i \in \{1, 2, 3\}$ fix the shortest 
	walk $p_{i} \in B_{G}^{\mathcal{P}}(e_{i})$. According to the definition we have that 
	$d_{G}^{\mathcal{P}}(e_i)$ is equal to the length of the walk $p_{i}$. Now suppose 
	that some of triangle inequalities associated with $\Delta \in \mathcal{P}(2)$ is 
	not satisfied. Without loss of generality we can assume that 
	$d_{G}^{\mathcal{P}}(e_1) > d_{G}^{\mathcal{P}}(e_2) + d_{G}^{\mathcal{P}}(e_3)$. 
	Consider the concatenated walk $p_{23}=p_{2} \cup p_{3}$. This path connects 
	the vertices of the edge $e_{1}$ and, moreover, it can be contracted to $e_{1}$. 
	Indeed, $p_{23}$ can be contracted to the walk $e_{2} \cup e_{3}$ which in its turn 
	can be contracted to the edge $e_3$ since $\mathcal{P}$ has the triangle $\Delta$.
	On the other hand, the value of $d_{G}^{\mathcal{P}}(e_1)$ is strictly more than 
	the length of the walk $p_{23}$ which contradicts with 
	the construction of $d_{G}^{\mathcal{P}}$.
\end{proof}
\noindent
See in Fig. \ref{metric_on_tiling} the example of the metric of the form
$d_{G}^{\mathcal{P}}$ where $\mathcal{P}$ is the cone over 3-cycle (note that
$\mathcal{P}$ contains 3 triangles, not $4$ as $\mathcal{K}_{4}$ has). 

\begin{figure}[t]
\includegraphics[width=.5\textwidth]{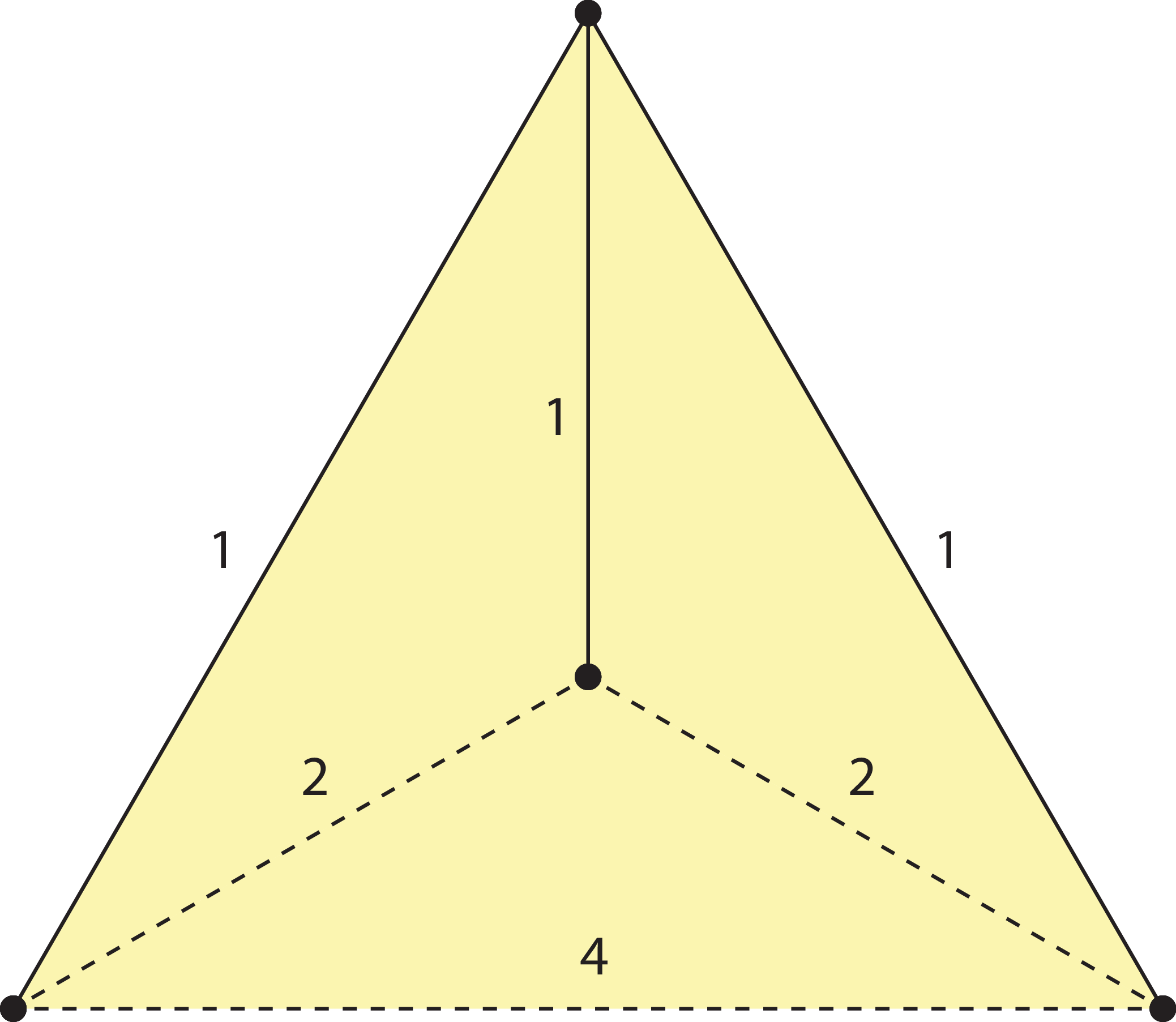}
	\caption{The graph metric on the cone over 3-cycle. This poset contains only 
	3 triangles. The subgraph of the 1-skeleton which induces the metric is highlighted by
	solid line. Note that the bottom edge has the contractable bypassing walk with
	repetitions. Also note that this metric is not extreme.}\label{metric_on_tiling}
\end{figure}

In the remaining of this section we will omit the upper index in $B_{G}^{\mathcal{P}}$ and
$d_{G}^{\mathcal{P}}$ if it is clear from the context what poset is considered. 
Consider the question when $d_{G}$ is an extreme metric on $\mathcal{P}$. We will present 
a sufficient condition for the graph $G \subset \mathcal{P}^{(1)}$ which generalizes 
the Avis' result. One of the main observations needed to formulate this condition 
is the following lemma: 
\begin{Lem}\label{TightConstraints}
	\textit{Let $d$ be any metric on $\mathcal{P}$. Assume that we have decomposition 
		$d = d' + d''$ where $d'$, $d''$ are some metrics on $\mathcal{P}$. If the metric
		$d$ satisfies the equality $d(e_1) + d(e_2) = d(e_3)$ for some edges $e_1$, $e_2$,
		$e_3$ which are facets of some triangle $\Delta \in \mathcal{P}(2)$ then this
		equality is true for $d'$ and $d''$ as well.}
\end{Lem}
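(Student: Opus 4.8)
The plan is to exploit the non-negativity of the slacks in the triangle inequalities. Since $d'$ and $d''$ are both metrics on $\mathcal{P}$, the first inequality of (\ref{TriangleInequalitiesOnComplexes}) applied to the facets $e_1$, $e_2$, $e_3$ of the triangle $\Delta$ yields the two relations $d'(e_1) + d'(e_2) - d'(e_3) \geq 0$ and $d''(e_1) + d''(e_2) - d''(e_3) \geq 0$. I would introduce the two quantities $s' := d'(e_1) + d'(e_2) - d'(e_3)$ and $s'' := d''(e_1) + d''(e_2) - d''(e_3)$, both of which are therefore non-negative. Conceptually, these are the defects measuring how far each metric is from making the chosen triangle inequality tight, and the whole point is that this defect is additive under the decomposition $d = d' + d''$.

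Next I would add the two relations and use the hypotheses. Since $d(e_i) = d'(e_i) + d''(e_i)$ for each $i$, and since $d$ satisfies the tight relation $d(e_1) + d(e_2) = d(e_3)$ by assumption, we get $s' + s'' = \big(d'(e_1)+d''(e_1)\big) + \big(d'(e_2)+d''(e_2)\big) - \big(d'(e_3)+d''(e_3)\big) = d(e_1) + d(e_2) - d(e_3) = 0$. Thus the sum of the two non-negative numbers $s'$ and $s''$ vanishes.

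Finally, a sum of non-negative reals equals zero only if each summand is zero, so $s' = s'' = 0$, and unwinding the definitions gives exactly $d'(e_1) + d'(e_2) = d'(e_3)$ and $d''(e_1) + d''(e_2) = d''(e_3)$, as claimed. There is no genuine obstacle here: the argument is just the standard observation that the facet of the cone $P(\mathcal{P})^{\vee}$ cut out by a tight inequality is a face, so any point on it decomposes only into points on the same face. The single thing to be careful about is to invoke the one inequality among the three in (\ref{TriangleInequalitiesOnComplexes}) that matches the tight relation $d(e_1) + d(e_2) = d(e_3)$, and to apply it to \emph{both} $d'$ and $d''$; the other two inequalities of the triangle play no role.
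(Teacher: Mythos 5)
Your proof is correct and uses essentially the same idea as the paper: both arguments rest on the fact that the slack of the tight triangle inequality is additive under $d = d' + d''$ and non-negative for each summand, the paper merely phrasing it as a contradiction while you phrase it directly as a sum of non-negative quantities equal to zero.
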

\begin{proof}
	Suppose that one of the metrics $d'$ or $d''$, say $d'$, does not satisfy the equality
	which means that we have strong inequality $d'(e_1) + d'(e_2) > d'(e_3)$. Since in any 
	case the metric $d''$ satisfies the inequality $d''(e_1) + d''(e_2) \geq d''(e_3)$ 
	we obtain the strong inequality $d(e_1) + d(e_2) > d(e_3)$ which contradicts 
	the assumptions. 
\end{proof}

\noindent
Also we will need the following technical lemma:
\begin{Lem}\label{TechnicalLemma}
	\textit{Let $p$ be a walk with the sequence of edges $e_{1}...e_{n}$ and the sequence
		of vertices $v_{1}...v_{n+1}$, and $p'$ be its contraction along $\mathcal{P}$. 
		Assume that $\gamma$ is arbitrary edge of the walk $p'$. If $\gamma$ connects 
		the vertices $v_{i}$, $v_{j}$, $1 \leq i < j \leq n$, then $\gamma$ is contraction 
		of the walk $e_{i}...e_{j-1}$. 
	}
\end{Lem}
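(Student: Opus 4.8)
The plan is to induct on the number $m$ of elementary contractions needed to pass from $p$ to $p'$, after first recording three basic properties of the contraction operation. First, a single elementary contraction deletes exactly one intermediate vertex of a walk and never reorders or removes the two endpoints; iterating, the ordered vertex sequence of any contraction of $p$ is obtained from $v_1,\ldots,v_{n+1}$ by deleting intermediate positions, so it corresponds to a set of surviving positions $s_1=1<s_2<\cdots<s_{r+1}=n+1$, and each edge of the contraction joins a \emph{consecutive} pair $v_{s_t},v_{s_{t+1}}$, i.e.\ spans the genuine position-interval $[s_t,s_{t+1}]$. (It is important to bookkeep with positions rather than with vertex labels, since the lemma permits the walk to revisit vertices; the natural reading of the index range is $1\le i<j\le n+1$.) Second, contraction is transitive, which is immediate since a contraction is by definition a composite of elementary contractions. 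Third — the one point that requires care — contraction is compatible with concatenation: if $a'$ is a contraction of $a$ and $b'$ of $b$ with $a,b$ composable, then $a'\cup b'$ is a contraction of $a\cup b$, because every elementary contraction in $a\to a'$ involves two adjacent edges lying strictly inside the $a$-part (and likewise for $b$), so the two contraction sequences can be performed consecutively inside $a\cup b$ without interference, the shared middle vertex being an endpoint of both and hence never deleted.

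For the base case $m=0$ we have $p'=p$, and an edge $\gamma=e_k$ joins $v_k,v_{k+1}$; here $e_i\cdots e_{j-1}$ is just $e_k$ itself, trivially its own contraction. For the inductive step I would write $p'$ as a single elementary contraction of a walk $q$ obtained from $p$ by $m-1$ steps, and apply the inductive hypothesis to $q$. Writing $q=f_1\cdots f_r$ with surviving positions $s_1<\cdots<s_{r+1}$, suppose the final elementary contraction merges the adjacent edges $f_t,f_{t+1}$ into a new edge $\gamma_0$ of $p'$ along a triangle of $\mathcal{P}$. Every edge of $p'$ other than $\gamma_0$ is already an edge of $q$ joining the same pair of positions, so the claim for it is exactly the inductive hypothesis. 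It remains only to handle $\gamma_0$, which joins $v_{s_t}$ and $v_{s_{t+2}}$.

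For $\gamma_0$ I would combine the three preliminary facts. By the inductive hypothesis $f_t$ is a contraction of $e_{s_t}\cdots e_{s_{t+1}-1}$ and $f_{t+1}$ is a contraction of $e_{s_{t+1}}\cdots e_{s_{t+2}-1}$; by concatenation-compatibility the two-edge walk $f_t\cup f_{t+1}$ is then a contraction of $e_{s_t}\cdots e_{s_{t+2}-1}$; and since $\gamma_0$ is a single elementary contraction of $f_t\cup f_{t+1}$ (this is precisely the last step taking $q$ to $p'$), transitivity yields that $\gamma_0$ is a contraction of $e_{s_t}\cdots e_{s_{t+2}-1}$, which is the desired conclusion with $i=s_t$ and $j=s_{t+2}$. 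I expect the genuine obstacle to lie entirely in the concatenation-compatibility statement and the position-bookkeeping supporting it; once those are in place the induction is essentially formal.
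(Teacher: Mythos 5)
Your proposal is correct and follows essentially the same route as the paper: induction on the number of elementary contractions, with the only nontrivial case being the edge created by the last contraction, which is handled by concatenating the two sub-walks given by the inductive hypothesis and contracting along the final triangle. The paper's proof invokes the concatenation-compatibility and transitivity of contraction implicitly (in its final ``Therefore''), whereas you isolate and justify them as preliminary facts --- and your position-based bookkeeping, with the corrected index range $1 \le i < j \le n+1$, is a slightly cleaner rendering of the same argument.
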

\begin{proof}
	By definition, there exists the sequence of walks $p_{1}$, ..., $p_{m}$ such that
	$p_{1}=p$, $p_{m}=p'$ and for any $1 \leq i \leq m-1$ the walk $p_{i+1}$ is 
	the elementary contraction of $p_{i}$ along some triangle 
	$\Delta_{i} \in \mathcal{P}(2)$. To prove the statement we use the induction by 
	the number of elementary contractions $m$ needed to obtain $p'$. For $m = 1$ 
	the statement is trivial because $p = p'$ in this case. Now suppose that 
	the statement is true for any $m \leq m_{0}$ and consider the case $m=m_{0}+1$. 
	By the construction, the walk $p' = p_{m_{0}+1}$ is the elementary contraction of 
	$p_{m_{0}}$ along some triangle $\Delta_{m_{0}}$. If $\gamma$ is not a facet of 
	$\Delta_{m_{0}}$ then $\gamma$ still belongs to the walk $p_{m_{0}}$ for which 
	the statement of the lemma holds by the assumption of induction. On the other hand, 
	if $\gamma$ is an edge of the triangle $\Delta_{m_{0}}$ then there should exist two 
	subsequent edges $\gamma_{1}$ and $\gamma_{2}$ of the walk $p_{m_{0}}$ such that 
	$\gamma_{1}$ connects $v_{i}$ with $v_{k}$, $\gamma_{2}$ connects $v_{k}$ with $v_{j}$ 
	for some $k$ satisfying the inequality $1 \leq i < k < j \leq n+1$, and 
	three edges $\gamma$, $\gamma_{1}$, $\gamma_{2}$ are facets of the triangle 
	$\Delta_{m_{0}}$. By assumption of the induction, $\gamma_{1}$ is the contraction of 
	$e_{i}...e_{k-1}$, while $\gamma_{2}$ is the contraction of $e_{k}...e_{j-1}$. 
	Therefore, $\gamma$ is the contraction of concatenation 
	$e_{i}...e_{k-1} \cup e_{k}...e_{j-1}$ which is just the walk $e_{i}...e_{j-1}$.
\end{proof}
\noindent
Using this lemma one can prove the following:
\begin{Teo}\label{PathThm}
	\textit{Assume that there is a decomposition $d_{G} = d' + d''$ for some metrics $d'$,
		$d''$ on $\mathcal{P}$. Let $e$ be any edge of $\mathcal{P}$ such that $B_{G}(e)
		\neq \emptyset$. Then for any walk $p=e_1 ... e_n \in B_{G}(e)$ we have 
		the equality $d'(e_1) + .. + d'(e_{n}) =  d'(e)$.
	}
\end{Teo}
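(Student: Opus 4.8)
The plan is to induct on the length $n$ of the walk $p$, which by the definition of $B_{G}(e)$ equals $d_{G}(e)$. In the base case $n=1$ the edge $e=e_{1}$ already lies in $E(G)$, and the asserted identity degenerates to the tautology $d'(e)=d'(e)$. So assume $n>1$ and that the statement holds for all edges whose $d_{G}$-value is strictly less than $n$.

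For the inductive step I would look at the sequence of elementary contractions that takes $p$ down to the single edge $e$ and isolate its \emph{last} step. This step replaces a two-edge walk, say $f\,g$, by $e$, where $f$, $g$, $e$ are the three facets of one triangle $\Delta\in\mathcal{P}(2)$. Since every contraction retains only a subset of the vertices of $p$ and the endpoints $v_{1},v_{n+1}$ are preserved, the vertex shared by $f$ and $g$ is some intermediate $v_{k+1}$, so that $f$ joins $v_{1}$ to $v_{k+1}$ and $g$ joins $v_{k+1}$ to $v_{n+1}$. Applying Lemma \ref{TechnicalLemma} to the original walk $p$, the edge $f$ is a contraction of the sub-walk $e_{1}\ldots e_{k}$ and $g$ is a contraction of $e_{k+1}\ldots e_{n}$.

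The crux is to certify that these sub-walks are themselves shortest, i.e. $e_{1}\ldots e_{k}\in B_{G}(f)$ and $e_{k+1}\ldots e_{n}\in B_{G}(g)$. This follows from a squeezing argument: the triangle inequality for $d_{G}$ on $\Delta$ gives $d_{G}(e)\le d_{G}(f)+d_{G}(g)$, while the two sub-walks exhibit $G$-walks contractable to $f$ and $g$ of lengths $k$ and $n-k$, whence $d_{G}(f)\le k$ and $d_{G}(g)\le n-k$; as $d_{G}(e)=n=k+(n-k)$, all three inequalities collapse to equalities. In particular $d_{G}(f)=k$, $d_{G}(g)=n-k$ (so the sub-walks are shortest), and the triangle is \emph{tight}: $d_{G}(e)=d_{G}(f)+d_{G}(g)$. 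Now the induction hypothesis applied to $f$ and $g$ yields $d'(e_{1})+\cdots+d'(e_{k})=d'(f)$ and $d'(e_{k+1})+\cdots+d'(e_{n})=d'(g)$, while Lemma \ref{TightConstraints} transfers the tight triangle equality to the summand $d'$, giving $d'(e)=d'(f)+d'(g)$. Adding the two inductive equalities and substituting closes the induction: $d'(e_{1})+\cdots+d'(e_{n})=d'(f)+d'(g)=d'(e)$.

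The main obstacle is precisely the squeezing step that recognizes both sub-walks as shortest contractable bypassing walks; once that is secured, the tightness of $\Delta$ and its transfer to $d'$ via Lemma \ref{TightConstraints} are routine. As an alternative that sidesteps the induction, I would assign to each walk its $d_{G}$-weighted length, the sum of $d_{G}$ over its edges. This quantity is non-increasing under elementary contractions by the triangle inequality; it equals $n=d_{G}(e)$ at the start (each $e_{i}\in E(G)$, so $d_{G}(e_{i})=1$) and $d_{G}(e)$ at the end, hence is \emph{constant} along the whole contraction. Constancy forces every intermediate triangle to be tight for $d_{G}$, so by Lemma \ref{TightConstraints} each is tight for $d'$ too; the $d'$-weighted length is then likewise constant, which is exactly the claimed equality.
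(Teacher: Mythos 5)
Your argument is correct, and it reaches the crucial point --- that every triangle used in the contraction sequence is $d_{G}$-tight --- by a genuinely different mechanism from the paper's. The paper processes the whole contraction sequence at once: via Lemma \ref{TechnicalLemma} it shows that every edge $\gamma$ of every intermediate walk, joining $v_{i}$ to $v_{j}$, satisfies exactly $d_{G}(\gamma)=j-i$, where the lower bound comes from a cut-and-paste contradiction (a shorter $G$-walk contractable to $\gamma$ could be spliced into $p$ in place of $e_{i}\ldots e_{j-1}$, producing a shorter walk in $G$ still contractable to $e$); tightness of each triangle then follows from the arithmetic $(j-i)+(k-j)=(k-i)$, and Lemma \ref{TightConstraints} plus telescoping finishes, exactly as in your closing step. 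You instead obtain tightness by squeezing with the triangle inequality for $d_{G}$ itself: in your inductive version the chain $d_{G}(e)\le d_{G}(f)+d_{G}(g)\le k+(n-k)=n=d_{G}(e)$ collapses to equalities, and in your monovariant version the $d_{G}$-weighted length is non-increasing under elementary contractions and equal at both ends, hence constant. Note that the paper's proof never invokes the triangle inequality for $d_{G}$, whereas both of your arguments do, so you should say where it comes from: either from Lemma \ref{ShortestDistanceMetric} (the hypothesis $d_{G}=d'+d''$ anyway forces $G$ to be bypassing, since $d_{G}$ must be defined on all of $\mathcal{P}(1)$), or, more economically, from the decomposition itself, since a sum of metrics satisfies the same inequalities wherever defined; you also implicitly use that $d_{G}$ is defined on $f$ and $g$ (and, in the monovariant version, on all intermediate edges), which is supplied by Lemma \ref{TechnicalLemma}, each such edge being a contraction of a sub-walk of $p$ lying in $G$. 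Granting these points, your monovariant argument is the most economical of the three --- no induction and no index bookkeeping --- though it yields only tightness, whereas the paper's exact computation $d_{G}(\gamma)=j-i$ is the ``idea'' that gets reused later in the proof of Theorem \ref{ExtremalMetrics}.
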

\begin{proof}
	Let $v_1 ... v_{n+1}$ be the sequence of vertices of the walk $p$. By the assumption,
	there exists the sequence of walks $p_1$, ..., $p_n$ such that $p_{1}=p$, $p_{n}=e$ 
	and for any $1 \leq i \leq n-1$ the walk $p_{i+1}$ is the elementary contraction of 
	$p_{i}$ along some triangle $\Delta_{i} \in \mathcal{P}(2)$. Fix arbitrary 
	$m \in \{1, ..., n\}$ and consider an edge $\gamma$ belonging to the walk $p_{m}$. 
	Suppose that $\gamma$ connects $v_{i}$ with $v_{j}$ for some $1 \leq i < j \leq n+1$. 
	From Lemma \ref{TechnicalLemma} it follows that $\gamma$ is a contraction of the walk 
	$e_{i}...e_{j-1}$. In particular, it means that $d_{G}(\gamma) \leq j - i$. 
	On the other hand, if $d_{G}(\gamma) > j - i$ then there should exists a walk 
	$p_{\gamma}$ in $G$ whose length is strictly smaller than the length of 
	$e_{i}...e_{j-1}$. However, in this case the concatenated walk 
	$e_{1}...e_{i-1} \cup p_{\gamma} \cup e_{j}...e_{n}$ can be contracted to $p_{m}$ and, 
	consequently, to the edge $e$. But it contradicts to that the walk $p$ is the shortest 
	contractable to $e$ walk in $G$. Therefore, we have that $d_{G}(\gamma) = j - i$.

	Now consider the sequence of triangles 
	$\Delta_{1}$, ..., $\Delta_{n-1} \in \mathcal{P}(2)$. 
	Fix arbitrary $m \in \{1, ..., n-1 \}$ and note that the triangle $\Delta_{m}$ 
	should contain three edges $\gamma_{1}$, $\gamma_{2}$, $\gamma_{3}$ such that 
	$\gamma_{1}$, $\gamma_{2}$ are subsequent edges of the walk $p_{m}$, while
	$\gamma_{3}$ belongs to $p_{m+1}$. The vertices of $\Delta_{m}$ are some $v_{i}$,
	$v_{j}$, $v_{k}$, $1 \leq i < j < k \leq n+1$, such that $\gamma_{1}$ connects $v_{i}$ 
	with $v_{j}$, $\gamma_{2}$ connects $v_{j}$ with $v_{k}$ and $\gamma_{3}$ connects 
	$v_{i}$ with $v_{k}$. As it was proved above we have that $d_{G}(\gamma_{1})=j-i$,
	$d_{G}(\gamma_{2})=k-j$ and $d_{G}(\gamma_{3})=k-i$ which implies the equality
	$d_{G}(\gamma_{1})+d_{G}(\gamma_{2})=d_{G}(\gamma_{3})$. 
	Applying Lemma (\ref{TightConstraints}) we obtain the similar equality for 
	the metric $d'$, i. e. $d'(\gamma_{1})+d'(\gamma_{2})=d'(\gamma_{3})$.

	Now denoting for each $m$ the edges of $\Delta_{m}$ chosen as above by 
	$\gamma_{m,1}$, $\gamma_{m,2}$, $\gamma_{m,3}$ we obtain the system of linear
	equalities $d'(\gamma_{m,1})+d'(\gamma_{m,2})=d'(\gamma_{m,3})$, $1 \leq m \leq n-1$. 
	Applying these equalities to the expression $d'(e_1)+...d'(e_{n})$ we reduce it 
	to $d'(e)$. Therefore, we have the equality $d'(e_1)+...+d'(e_n)=d'(e_n)$.
\end{proof}

Consider \textit{a shift operator} $s_{n}: \{1, ..., n\} \longrightarrow \{1, ..., n\}$
defined as follows: $s_{n}(i) = i+1$ for any $1 \leq i \leq n-1$, and $s_{n}(n)=1$. We
denote its composition with itself $m$ times by $s_{n}^{m}$, i. e. $s_{n}^{m} = s_{n}
\circ (m \text{ times}) \circ s_{n}$. Using this notation we can formulate the following
statement:

\begin{Cor}\label{CycleCor}
	\textit{Assume that there is a decomposition $d_{G} = d' + d''$ for some metrics $d'$,
		$d''$ on $\mathcal{P}$. Let $C$ be a cycle in $G$ with even number of edges whose 
		sequence of edges is $e_1... e_{2n}$ and sequence of vertices is $v_1... v_{2n}$. 
		Suppose that for any $1 \leq i \leq n$ there exists an edge $\widetilde{e}_{i} 
		\in \mathcal{P}(1) \setminus E(G)$ connecting vertices $v_{i}$ and 
		$v_{s_{2n}^{n}(i)}$ such that the walks $e_i ... e_{s_{2n}^{n-1}(i)}$ and 
		$e_{s_{2n}^{n}(i)} ...e_{s_{2n}^{2n-1}(i)}$ belong to the set 
		$B_{G}(\widetilde{e}_{i})$. Then we have the equality 
		$d'(e_{i}) = d'(e_{s_{2n}^{n}(i)})$ for each $i \in \{1, ..., n \}$.
	}
\end{Cor}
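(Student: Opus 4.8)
The plan is to feed each auxiliary edge $\widetilde{e}_i$ into Theorem \ref{PathThm}. Throughout, edge indices are read modulo $2n$, so that $e_{i+k}=e_{s_{2n}^{k}(i)}$ and $s_{2n}^{n}(i)$ is simply $i+n$ reduced to $\{1,\dots,2n\}$. Fix $i\in\{1,\dots,n\}$. The two walks in the hypothesis, namely $p_i=e_i\cdots e_{i+n-1}$ and $q_i=e_{i+n}\cdots e_{i+2n-1}$, both connect $v_i$ with $v_{i+n}$ (the endpoints of $\widetilde{e}_i$) and both belong to $B_G(\widetilde{e}_i)$. Applying Theorem \ref{PathThm} to the edge $\widetilde{e}_i$ once with $p_i$ and once with $q_i$ yields two expressions for $d'(\widetilde{e}_i)$, and equating them gives the balance relation
\[
  \sum_{k=0}^{n-1} d'(e_{i+k}) \;=\; \sum_{k=0}^{n-1} d'(e_{i+n+k}),
  \qquad i=1,\dots,n,
\]
which I denote by $(\ast_i)$.

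It then remains to extract the pointwise equalities from the system $\{(\ast_i)\}$. Subtracting $(\ast_{i+1})$ from $(\ast_i)$ for $1\le i\le n-1$, each window of summation shifts by one position: the left-hand side changes by $d'(e_i)-d'(e_{i+n})$, while the right-hand side changes by $d'(e_{i+n})-d'(e_{i+2n})=d'(e_{i+n})-d'(e_i)$, using $e_{i+2n}=e_i$. Hence
\[
  d'(e_i)-d'(e_{i+n}) \;=\; d'(e_{i+n})-d'(e_i),
\]
so that $2\bigl(d'(e_i)-d'(e_{i+n})\bigr)=0$ and therefore $d'(e_i)=d'(e_{i+n})=d'(e_{s_{2n}^{n}(i)})$ for all $i=1,\dots,n-1$. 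The single remaining case $i=n$ follows by substituting these $n-1$ equalities into $(\ast_1)$: all but the last term on each side cancel in pairs, leaving $d'(e_n)=d'(e_{2n})=d'(e_{s_{2n}^{n}(n)})$. This establishes the claimed equality for every $i\in\{1,\dots,n\}$.

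The substantive content is entirely absorbed by Theorem \ref{PathThm}, which already encodes the additivity of $d'$ along shortest contractable walks; what is left is elementary linear algebra, so I do not expect a genuine obstacle. The points that require care are the bookkeeping of the cyclic indices $s_{2n}^{m}$ and the verification that each half-walk really has $v_i$ and $v_{i+n}$ as its endpoints, so that both are legitimate inputs for $B_G(\widetilde{e}_i)$. Conceptually the decisive feature is that the cycle has even length $2n$ with the two halves offset by exactly $n$: writing $b_i:=d'(e_i)-d'(e_{i+n})$ one has $b_{i+n}=-b_i$, and it is precisely this antipodal sign flip that produces the factor $2$ and forces $b_i=0$. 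For an odd cycle the window-difference argument would not close up, so evenness is essential rather than incidental.
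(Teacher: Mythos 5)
Your proposal is correct and follows essentially the same route as the paper: both apply Theorem \ref{PathThm} to the two half-walks of each isometric cycle to obtain the balance relations $(\ast_i)$ (each side equalling $d'(\widetilde{e}_i)$), and then extract the pointwise equalities by combining relations whose summation windows are offset by one — the paper adds $(\ast_i)$ to $(\ast_{s_{2n}^{n+1}(i)})$, which is exactly your subtraction of $(\ast_{i+1})$ from $(\ast_i)$ with the sides swapped. Your separate treatment of the case $i=n$ via substitution into $(\ast_1)$ is a cosmetic difference only, handled in the paper uniformly by the cyclic index.
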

\begin{proof}
	From Theorem \ref{PathThm} we obtain the equality 
	$d'(e_{k}) + ... + d'(e_{s_{2n}^{n-1}(k)}) = 
		d'(e_{s_{2n}^{n}(k)}) + ... + d'(e_{s_{2n}^{2n-1}(k)})$ 
	for each $k \in \{1, ..., n\}$ because the both left- and right-hand sides of it are 
	equal to $d'(\widetilde{e}_{k})$. Consideration of the sum of two equalities 
	for indices $k = i$ and $k = s_{2n}^{n+1}(i)$ immediately leads to that 
	$d'(e_{i})=d'(e_{s_{2n}^{n}(i)})$.  
\end{proof}

\noindent
We will call any cycle $C$ in $G \subset \mathcal{P}^{(1)}$ satisfying the condition of 
Corollary \ref{CycleCor} \textit{an isometric even cycle in} $G$ \textit{with respect to}
$\mathcal{P}$. Since the number of edges in $C$ is even one can think that its edges 
$e_{i}$ and $e_{s_{2n}^{n}(i)}$ \textit{are opposite} to each other. Therefore, we have 
that for any isometric even cycle in $G$ the metric $d'$ as in Corollary \ref{CycleCor} 
takes the same value for each pair of opposite edges of $C$.

Using this observation and following \cite[Sec. 2]{Av} we introduce 
\textit{isometric cycle coloring (ic-coloring) of $G \subset \mathcal{P}^{(1)}$ with 
respect to $\mathcal{P}$} which is an assignment of colors, considered just as natural 
numbers, to edges satisfying some property. More precisely, it is defined by 
the following procedure:
\begin{enumerate}[(i)]
	\item Initially all edges of $G$ are uncolored. Pick any edge and give it color $1$,
		set $k = 1$.
	\item Find an uncolored edge that is opposite to an edge colored $k$ in some even
		isometric cycle of $G$. If there is no such edge go to the step (iii), otherwise 
		color the edge $k$ and repeat the step (ii).
	\item If $G$ is not completely colored, pick any uncolored edge, give it color $k+1$,
		set $k \longleftarrow k+1$ and go to the step (ii).
\end{enumerate}
If two edges $e_{1}$ and $e_{2}$ of $G$ have the same color in the result of the algorithm
then we will write that $e_{1} \sim e_{2}$. Note that the algorithm has some uncertainty 
in the process of choosing uncolored edges. However, the step (ii) implies that if 
some of results of the algorithm assigns the same color to given two edges then it will be 
so for these two edges for any result of the algorithm. In particular, it means that 
the algorithm always induces the same partition of the set of edges into color classes, so 
the relation $\sim$ is actually an equivalence relation on the edges of the graph $G$ 
which depends only on $G$ and $\mathcal{P}$. From this it follows that the number of 
colors used by the algorithm is correctly defined number depending only on $G$ and 
$\mathcal{P}$. We will call a graph $G \subset \mathcal{P}^{(1)}$ 
\textit{$k$-ic-colorable in} $\mathcal{P}$ if exactly $k$ colors are used in 
the algorithm.

\begin{Teo}\label{ExtremalMetrics}
	\textit{If a bypassing subgraph $G \subset \mathcal{P}^{(1)}$ is 1-ic-colorable 
		in $\mathcal{P}$ then the metric $d_{G}$ on $\mathcal{P}$ is extreme.
	}
\end{Teo}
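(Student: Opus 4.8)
The plan is to show that any decomposition $d_G = d' + d''$ into metrics on $\mathcal{P}$ forces both $d'$ and $d''$ to be proportional to $d_G$, which is exactly the extremality we want. First I would fix such a decomposition and analyze the values of $d'$ on the edges of $G$. The key input is Corollary \ref{CycleCor}: whenever two edges of $G$ are opposite in an isometric even cycle with respect to $\mathcal{P}$, the metric $d'$ assigns them equal values. Since step (ii) of the ic-coloring algorithm propagates a color precisely along the relation ``opposite in some even isometric cycle of $G$'', an induction on the number of coloring steps shows that $d'$ is constant on each color class. The hypothesis that $G$ is 1-ic-colorable then means there is a single color class, so $d'$ takes one common value, say $c := d'(e_0)$, on every edge $e \in E(G)$.

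Next I would extend this constancy from $E(G)$ to all edges of $\mathcal{P}$. Since $G$ is a bypassing subgraph, for every edge $e \in \mathcal{P}(1)$ the set $B_G(e)$ is non-empty; choose a shortest walk $p = e_1 \ldots e_n \in B_G(e)$, so that $n = d_G(e)$ and every $e_i$ lies in $E(G)$. Applying Theorem \ref{PathThm} to the same decomposition gives $d'(e) = d'(e_1) + \ldots + d'(e_n)$, and since each $d'(e_i)$ equals $c$ this sum is $c \cdot n = c \cdot d_G(e)$. For an edge of $G$ itself this reads $d'(e) = c = c \cdot d_G(e)$, consistent with $d_G \equiv 1$ on $E(G)$, so $d'(e) = c \cdot d_G(e)$ holds uniformly for all $e \in \mathcal{P}(1)$. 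Hence $d' = c \cdot d_G$, and correspondingly $d'' = d_G - d' = (1 - c) \cdot d_G$; both are proportional to $d_G$, which establishes that $d_G$ is an extreme metric.

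The two appeals to Corollary \ref{CycleCor} and Theorem \ref{PathThm} are routine; the step that needs the most care is the claim that $d'$ is constant on each color class. Here I would rely on the observation, already made when the coloring was introduced, that the color classes are exactly the equivalence classes of the relation generated by ``being opposite in an even isometric cycle'', independently of the choices made in the algorithm. Corollary \ref{CycleCor} turns each generating instance of this relation into an equality of $d'$-values, so constancy on classes follows by transitivity. The only bookkeeping is to ensure that the witnessing isometric cycles and the chosen shortest walks stay inside $G$, so that the common value $c$ genuinely propagates through the computation; this is guaranteed by the definitions of $B_G$ and of an isometric even cycle with respect to $\mathcal{P}$.
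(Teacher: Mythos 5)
Your proof is correct, and it reaches the conclusion by a genuinely leaner route than the paper's. Both arguments begin the same way: 1-ic-colorability combined with Corollary \ref{CycleCor} forces $d'$ to be constant, say equal to $c$, on $E(G)$ (the paper asserts this in one sentence; your induction over the coloring steps, using the fact that the color classes are the equivalence classes generated by the opposite-in-an-isometric-even-cycle relation, is exactly the justification it leaves implicit). The difference lies in how this constancy is propagated to the remaining edges. The paper runs an induction over the filtration $E_{k} = \{ e \in \mathcal{P}(1) \mid d_{G}(e) \leq k \}$: for an edge $e$ with $d_{G}(e) = k_{0}+1$ it takes a shortest contractable walk, inspects only the final elementary contraction, re-derives the tightness equality $d_{G}(\gamma_{1}) + d_{G}(\gamma_{2}) = d_{G}(e)$ by repeating the ideas of the proof of Theorem \ref{PathThm}, and then applies Lemma \ref{TightConstraints} to push proportionality up one level. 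You instead invoke Theorem \ref{PathThm} itself: for any edge $e$, a walk $p = e_{1} \ldots e_{n} \in B_{G}(e)$ exists because $G$ is bypassing, lies entirely in $G$, and has length $n = d_{G}(e)$ by definition of $d_{G}$, so $d'(e) = d'(e_{1}) + \ldots + d'(e_{n}) = c \cdot n = c \cdot d_{G}(e)$ in a single computation, with no induction; then $d' = c \cdot d_{G}$ and $d'' = (1-c) \cdot d_{G}$. This is a real streamlining rather than a cosmetic one: Theorem \ref{PathThm} already encapsulates precisely the contraction and tightness bookkeeping that the paper redoes by hand inside the induction, and since its hypotheses (a decomposition of $d_{G}$ and $B_{G}(e) \neq \emptyset$) are available verbatim here, nothing is lost by using it at full strength. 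The only thing the paper's version buys is that each inductive step leans on the weaker Lemma \ref{TightConstraints} rather than on the global statement of Theorem \ref{PathThm}; substantively the two proofs rest on the same two pillars, and yours packages the second one more efficiently.
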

\begin{proof}
	Assume that there is a decomposition $d_{G} = d + d'$ for some metrics $d'$, $d''$ 
	on $\mathcal{P}$. Taking into account the result of Corollary \ref{CycleCor}
	the assumption that $G$ is 1-ic-colorable in $\mathcal{P}$ implies that $d'$ takes 
	the same value for all edges of the graph $G$, i. e. $d'|_{E(G)} \equiv \text{const}$. 
	Note that for any edge $e \in E(G)$ we have $d_{G}(e)=1$, so there exists 
	a multiplier $\lambda \in \mathbb{R}_{>0}$ such that $d'(e) = \lambda \cdot d_{G}(e)$ 
	for any $e \in E(G)$.

	Now consider the induction by the value of the metric $d_{G}$ on edges. 
	More precisely, denote by $E_{k}$, $k \geq 1$, the subset of $\mathcal{P}(1)$ 
	consisted of all edges $e \in \mathcal{P}(1)$ such that $d_{G}(e) \leq k$. 
	We have that $E_{1} = E(G)$, $E_{k} \subset E_{k+1}$ for any $k \geq 1$ and 
	$\mathcal{P}(1) = \bigcup_{k=1}^{\infty} E_{k}$. As it was shown $d'|_{E_{1}}$ is 
	proportional to $d_{G}|_{E_{1}}$ with multiplier $\lambda$, this is the base of 
	induction. Now suppose that $d'|_{E_{k}}$ is proportional to $d_{G}|_{E_{k}}$ with 
	multiplier $\lambda$ for any $k \leq k_{0}$. Take any edge $e$ from 
	$E_{k_{0}+1} \setminus E_{k_{0}}$. By the construction, we have that 
	$d_{G}(e) = k_{0}+1$ which means that there is a walk $p \in B_{G}(e)$ consisted of 
	$k_{0}+1$ edges. By the definition, there exists the sequence of walks 
	$p_{1}$, ..., $p_{k_{0}+1}$ such that $p_{1} = p$, $p_{k_{0}+1} = e$ and 
	$p_{i+1}$ is the elementary deformation of $p_{i}$ along some triangle 
	$\Delta_{i} \in \mathcal{P}(2)$, $1 \leq i \leq k_{0}$. In particular, it means that 
	the walk $p_{k_{0}}$ consists of two edges $\gamma_{1}$, $\gamma_{2}$ and it contracts 
	to $e$ along the triangle $\Delta_{k_{0}}$ whose facets are the edges $\gamma_{1}$,
	$\gamma_{2}$, $e$. Repeating the ideas of the proof of Theorem \ref{PathThm} we obtain
	that $d_{G}(\gamma_{1}) + d_{G}(\gamma_{2}) = d_{G}(e)$ which implies that
	$d_{G}(\gamma_{1})$, $d_{G}(\gamma_{2}) < k_{0} + 1$, so $\gamma_{1}$, 
	$\gamma_{2} \in E_{k_{0}}$. On the other hand, from Lemma \ref{TightConstraints} 
	it follows that 
	$d'(e) = d'(\gamma_{1}) + d'(\gamma_{2}) = 
		\lambda \cdot d_{G}(\gamma_{1}) + \lambda \cdot d_{G}(\gamma_{2}) = 
		\lambda \cdot (d_{G}(\gamma_{1}) + d_{G}(\gamma_{2})) = \lambda \cdot d_{G}(e)$.
	Therefore, we have that $d'$ is proportional to $d_{G}$ with multiplier $\lambda$ over 
	$E_{k_{0}+1}$ as well. By induction we obtain that $d'$ is proportional to $d_{G}$ 
	with multiplier $\lambda$ for all edges of $\mathcal{P}$.
	Therefore, the metric $d_{G}$ is extreme.
\end{proof}

The partial case of this theorem for $\mathcal{P}=\mathcal{K}_{n}$ completely coincides
with \cite[Theorem 2.4]{Av}. In particular, \cite[Theorem 3.2]{Av} gives
the large class of $G \subset \mathcal{K}_{n}^{(1)}$ 1-ic-colorable in $\mathcal{K}_{n}$
which correspond to extreme metrics on $\mathcal{K}_{n}$. In its turn, they correspond to
facets of the Graev polytope $P(A_{n+1}), n \geq 1$. We will also present some class of
bypassing subgraphs $G \subset \overline{\mathcal{K}}_{n}^{(1)}$ which are 1-ic-colorable 
in $\overline{\mathcal{K}}_{n}$ giving extreme metrics on $\overline{\mathcal{K}}_{n}$ 
according to Theorem \ref{ExtremalMetrics}. Firstly, recall that any pair of vertices of 
$\overline{\mathcal{K}}_{n}$, say $v_{i}$ and $v_{j}$, is connected by two edges which
were denoted by $e_{ij}$ and $e^{i}_{j}$, respectively. Therefore, we have graph 
decomposition $\overline{\mathcal{K}}_{n}^{(1)} = K_{n}^{-} \cup K_{n}^{+}$ where 
$K_{n}^{-}$ is the subgraph contained all edges of the form $e_{ij}$, while $K_{n}^{+}$ 
contains the edges of the form $e^{i}_{j}$. By the construction of
$\overline{\mathcal{K}}_{n}$, both $K_{n}^{-}$ and $K_{n}^{+}$ are isomorphic to 
the complete graph $K_{n}$.

\begin{Teo}\label{ExtremalMetricsInDn}
	\textit{Let $n$ be a natural number such that $n \geq 5$ and the number $(n-1)$ is not
		divided by 3. Then any spanning subgraph 
		$G \subset K_{n}^{+} \subset \overline{\mathcal{K}}_{n}^{(1)}$ which is isomorphic 
		to $(CH)^{(1)}$ for some Hamiltonian graph $H$ is bypassing and 1-ic-colorable 
		in $\overline{\mathcal{K}}_{n}$.
	}
\end{Teo}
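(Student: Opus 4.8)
The plan is to establish the two assertions separately, writing the apex of the cone as $v_n$ and the vertices of $H$ as $v_1,\dots,v_{n-1}$, with the Hamiltonian cycle taken to be $v_1\,v_2\cdots v_{n-1}\,v_1$. I abbreviate the \emph{spokes} $s_i:=e^i_n\in G$ ($1\le i\le n-1$), which are all present because the apex of $CH$ is joined to every vertex of $H$, and the \emph{rim edges} $h_i:=e^i_{i+1}\in G$ (indices read modulo $n-1$) coming from the Hamiltonian cycle. The only triangles of $\overline{\mathcal{K}}_n$ I use are $\Delta^a_{bc}$, with facets $e^a_b,e^a_c,e_{bc}$: two plus-edges $e^a_b,e^a_c$ sharing $v_a$ contract to the minus-edge $e_{bc}$, while a plus-edge and a minus-edge contract to a plus-edge. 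Since $G\subset K_n^+$ consists only of plus-edges, every length-$2$ walk in $G$ contracts to a minus-edge.

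For the bypassing property I exhibit contractable walks in $G$ for every edge of $\overline{\mathcal{K}}_n$. A plus-edge lying in $G$ is its own walk. For a minus-edge $e_{ij}$ with $i,j<n$ the walk $s_i s_j$ runs $v_i\to v_n\to v_j$ and contracts through $\Delta^n_{ij}$ to $e_{ij}$; for a minus-edge $e_{in}$ I pick a Hamiltonian neighbour $v_k$ of $v_i$ (one exists since $H$ is Hamiltonian) and use $e^i_k s_k$, contracting through $\Delta^k_{in}$. For a plus-edge $e^i_j\notin G$ I take the length-$3$ walk $e^i_k\,s_k\,s_j$ with $v_k$ a Hamiltonian neighbour of $v_i$: contracting $s_k s_j$ to $e_{kj}$ and then $e^i_k e_{kj}$ through $\Delta^i_{kj}$ yields $e^i_j$. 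As $G$ is spanning, it is bypassing. The same computation pins down the metric values needed below: every minus-edge has $d_G=2$ and every plus-edge outside $G$ has $d_G=3$, the lower bounds being forced because $G$ contains no minus-edge (so $d_G\ge2$ there) and because every length-$2$ walk in $G$ contracts to a minus-edge rather than a plus-edge (so $d_G\ge3$ for plus-edges outside $G$).

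For $1$-ic-colourability I use $4$-cycles through the apex. For each consecutive triple $v_i,v_{i+1},v_{i+2}$ on the rim, the quadrilateral $v_n\,v_i\,v_{i+1}\,v_{i+2}$ is a cycle in $G$ with edges $s_i,h_i,h_{i+1},s_{i+2}$. I claim it is an isometric even cycle with respect to $\overline{\mathcal{K}}_n$. Its two pairs of opposite vertices are $(v_n,v_{i+1})$ and $(v_i,v_{i+2})$, joined respectively by the minus-edges $e_{(i+1)n}\notin G$ and $e_{i(i+2)}\notin G$. Each of the four halves is a two-plus-edge walk, hence contracts to the corresponding minus-edge as above; since these minus-edges have $d_G=2$ and the halves have length $2$, the halves lie in $B_G(\widetilde e)$, so Corollary~\ref{CycleCor} applies and forces equal colour on opposite edges, namely $s_i\sim h_{i+1}$ and $h_i\sim s_{i+2}$. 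Combining the two families gives $h_j\sim s_{j-1}$ and $h_j\sim s_{j+2}$, whence $s_k\sim s_{k+3}$ for every $k\in\mathbb{Z}/(n-1)$.

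The arithmetic now enters: the relation $s_k\sim s_{k+3}$ merges all spokes into a single colour class precisely when $3$ generates $\mathbb{Z}/(n-1)$, i.e.\ when $\gcd(3,n-1)=1$, which is the hypothesis $3\nmid(n-1)$; the hypothesis $n\ge5$ guarantees $|V(H)|\ge4$ so that these quadrilaterals are genuine $4$-cycles. Once all spokes share a colour, every rim edge joins them through $h_j\sim s_{j-1}$, and any further edge $e^a_b$ of $H$ joins them through the analogous isometric $4$-cycle $v_n\,v_a\,v_b\,v_{b'}$, where $v_{b'}$ is a Hamiltonian neighbour of $v_b$ distinct from $v_a$; its opposite-edge relation yields $e^a_b\sim s_{b'}$. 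Hence all edges of $G$ receive one colour and $G$ is $1$-ic-colourable. I expect the main obstacle to be the verification that both halves of each quadrilateral are genuinely \emph{shortest} contractable walks — this is exactly what makes the $d_G$-computation of the second step indispensable — together with the careful bookkeeping of the opposite-edge relations that isolates the divisibility condition $3\nmid(n-1)$.
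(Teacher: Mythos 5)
Your proof is correct and follows essentially the same route as the paper's: the same choice of coordinates (apex at $v_{n}$, Hamiltonian cycle $v_{1}v_{2}\cdots v_{n-1}$), the same isometric $4$-cycles through the apex (your quadrilateral $v_{n}\,v_{i}\,v_{i+1}\,v_{i+2}$ is the paper's cycle $C_{i}$ up to cyclic rotation), the same opposite-edge relations, and the same use of $3\nmid(n-1)$ to merge all edges of $G$ into one colour class. The only notable deviation is in the bypassing step for plus-edges $e^{i}_{j}\notin E(G)$, where you use the uniform length-$3$ walk $e^{i}_{k}\,e^{k}_{n}\,e^{n}_{j}$ with two explicit elementary contractions, whereas the paper uses odd-length walks along the Hamiltonian cycle (with a parity adjustment through the apex) and leaves the contractibility of odd walks in $K_{n}^{+}$ as a ``one can show''; your variant is more direct and self-contained, but it does not change the overall structure of the argument.
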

\begin{proof}
	Without loss of generality we can assume that $V(H) = \{ v_{1}, ..., v_{n-1}\}$ and 
	a Hamiltonian cycle in $H$ which we denote by $C$ is giving by the sequence of edges
	$e_{1}^{2} e_{2}^{3} ... e_{n-2}^{n-1} e_{n-1}^{1}$. In this case we have that 
	$V(G) \setminus V(H) = \{ v_{n} \}$ and 
	$E(G) \setminus E(H) = \{ e_{i}^{n} \ | \ 1 \leq i \leq n-1\}$. 
	Denote the subgraph of $K_{n}^{-}$ induced by the subset of vertices 
	$\{v_{1}, ..., v_{n-1}\}$ by $K_{n-1}^{-} \subset K_{n}^{-}$ and, similarly, define 
	the subgraph $K_{n-1}^{+} \subset K_{n}^{+}$. Firstly, we prove that $G$ is bypassing 
	in $\overline{\mathcal{K}}_{n}$. Note that the edges not belonging to $G$ form the set 
	which can be decomposed to the disjoint union as follows: 
	$E(K_{n-1}^{-}) \sqcup \big( E(K_{n}^{-}) \setminus E(K_{n-1}^{-}) \big) 
		\sqcup \big( E(K_{n-1}^{+}) \setminus E(H) \big)$.

	Consider any edge from $E(K_{n-1}^{-})$ which, by construction, has the form $e_{ij}$ 
	for some $1 \leq i < j \leq n-1$. Such edge always has the bypassing contractable walk
	$e_{i}^{n} e_{j}^{n}$. Both edges of this walk belong to the set 
	$E(G) \setminus E(H)$, so the walk lies in $G$. Next, each edge in the set 
	$E(K_{n}^{-}) \setminus E(K_{n-1}^{-})$ has the form $e_{in}$ where 
	$1 \leq i \leq n-1$. Note that the vertex $v_{i}$ belongs to the cycle 
	$C \subset H \subset G$, so the graph $G$ contains the edge $e_{i}^{s_{n-1}(i)}$ 
	by assumption. On the other hand, the edge $e_{s_{n-1}(i)}^{n}$ belongs to the set 
	$E(G) \setminus E(H)$. Therefore, the walk $e_{i}^{s_{n-1}(i)} e_{s_{n-1}(i)}^{n}$ 
	lies in $G$ and, moreover, it is a bypassing contractable walk of the edge $e_{in}$.

	Finally, consider arbitrary edge from $E(K_{n-1}^{+}) \setminus E(H)$. It has the form
	$e_{i}^{j}$ for some $1 \leq i < j \leq n-1$. Since the vertices $v_{i}$ and $v_{j}$
	belong to the cycle $C$, this cycle also contains the walk $e_{i}^{i+1}...e_{j-1}^{j}$
	connecting $v_{i}$ with $v_{j}$. One can show that any walk in $K_{n}^{+}$ connecting
	$v_{i}$ and $v_{j}$ which contains odd number of edges can be contracted to the edge
	$e_{i}^{j}$ along $\overline{\mathcal{K}}_{n}$. 
	So if the walk $e_{i}^{i+1}...e_{j-1}^{j}$ contains the odd number of edges then it is 
	a contractable bypassing walk of the edge $e_{i}^{j}$ which lies in $G$. Otherwise, 
	if this walk has even number of edges, we can consider the slightly modified walk 
	$e_{i}^{n} e_{n}^{i+1} e_{i+1}^{i+2} ... e_{j-1}^{j}$ which already contains 
	odd number of edges and still lies in $G$. Therefore, we obtain that $G$ is 
	a bypassing subgraph in $\mathcal{P}$.

	Now we need to prove that $G$ is 1-ic-colorable. To do this we consider for any vertex
	$v_{i} \in V(H) = V(C)$ the 4-cycle 
	$C_{i} = e_{i}^{s_{n-1}(i)} e_{s_{n-1}(i)}^{s_{n-1}^{2}(i)} 
		e_{s_{n-1}^{2}(i)}^{n} e_{n}^{i}$ 
	with the vertex sequence $v_{i} v_{s_{n-1}(i)} v_{s_{n-1}^{2}(i)} v_{n}$. One can show 
	that these cycles are isometric in $G \subset \overline{\mathcal{K}}_{n}$. Indeed, 
	the pair of vertices $v_{i}$ and $v_{s_{n-1}^{2}(i)}$ is connected by the edge 
	$e_{i s_{n-1}^{2}(i)} \not\in E(G)$ for which we have that the walks 
	$e_{i}^{s_{n-1}(i)} e_{s_{n-1}(i)}^{s_{n-1}^{2}(i)}$ and 
	$e_{s_{n-1}^{2}(i)}^{n} e_{n}^{i}$ belong to the set $B_{G}(e_{i s_{n-1}^{2}(i)})$ 
	since $\overline{\mathcal{K}}_{n}$ contains triangles 
	$\Delta_{i s_{n-1}^{2}(i)}^{s_{n-1}(i)}$ and $\Delta_{i s_{n-1}^{2}(i)}^{n}$. 
	The similar argument holds for the pair of vertices $v_{s_{n-1}(i)}$ and $v_{n}$, 
	so $C_{i}$ is even isometric cycle in $G \subset \overline{\mathcal{K}}_{n}$ for each 
	$1 \leq i \leq n-1$. According to the ic-coloring procedure it means that the opposite 
	edges of the cycle $C_{i}$ have the same colors, i. e. 
	$e_{i}^{s_{n-1}(i)} \sim e_{s_{n-1}^{2}(i)}^{n}$ and 
	$e_{s_{n-1}(i)}^{s_{n-1}^{2}(i)} \sim e_{n}^{i}$ for any $1 \leq i \leq n-1$. 
	It implies that 
	$e_{i}^{s_{n-1}(i)} \sim e_{s_{n-1}^{2}(i)}^{n} \sim 
		e_{s_{n-1}^{3}(i)}^{s_{n-1}^{4}(i)}$, so we have that 
	$e_{i}^{s_{n-1}(i)} \sim e_{s_{n-1}^{3}(i)}^{s_{n-1}^{4}(i)}$ for any 
	$1 \leq i \leq n-1$. By the assumption the number $n-1$ is not divided by $3$, 
	so iterations of the operator $s_{n-1}^{3}$ applied to any $i \in \{ 1, ..., n-1 \}$ 
	cover all the set $\{ 1, ..., n-1 \}$. It means that we have that 
	$e_{i}^{s_{n-1}(i)} \sim e_{j}^{s_{n-1}(j)}$ for any $1 \leq i < j \leq n-1$, i. e.
	all edges of the Hamiltonian cycle $C$ have the same color. Due to that
	$e_{i}^{s_{n-1}(i)} \sim e_{s_{n-1}^{2}(i)}^{n}$ for any $1 \leq i \leq n-1$ we also
	obtain that the edges of $C$ and edges from the set $E(G) \setminus E(H)$ belong to 
	the same color class. Finally, consider the edges from $E(H) \setminus E(C)$. 
	Each of them has the form $e_{i}^{j}$ where $1 \leq i < j \leq n-1$ and 
	$j \neq s_{n-1}(i)$. In the similar way as above one can show that 
	the 4-cycle $e_{i}^{j} e_{j}^{s_{n-1}(j)} e_{s_{n-1}(j)}^{n} e_{n}^{i}$ is isometric 
	in $G \subset \overline{\mathcal{K}}_{n}^{(1)}$ which means that 
	$e_{i}^{j} \sim e_{s_{n-1}(j)}^{n}$. Therefore, the edges from the sets 
	$E(C)$, $E(G) \setminus E(H)$ and $E(H) \setminus E(C)$ are all in 
	the same color class, so $G$ is 1-ic-colorable.
\end{proof}


\begin{ack}
I would like to thank D. V. Alekseevsky for useful discussions on the different topics
related to the paper.
\end{ack}

\begin{funding}
The work was supported by the Theoretical Physics and Mathematics Advancement Foundation 
"BASIS".
\end{funding}


\end{document}